\newtheorem{theorem}{Theorem}[section]
\newtheorem{lemma}[theorem]{Lemma}
\newtheorem{remark}[theorem]{Remark}
\newcommand{\ta}{\theta}
\newcommand{\Z}{\mathbb Z}
\newcommand{\R}{\mathbb R}
\newcommand{\C}{\mathbb C}
\newcommand{\T}{\mathbb T}
\newcommand{\N}{\mathbb N}
\newcommand{\kreis}{\mathbb{S}}
\def\mes{\mathrm{mes}}
\definecolor{deepgreen}{cmyk}{1,0,1,0.5}
\def\calC{\mathcal{C}}
\def\tor{\mathbb{T}}
\def\Mat{\mathrm{Mat}}
\def\diam{\mathrm{diam}}
\title[H\"older continuity of IDS]{H\"older continuity of the integrated density of states for quasi-periodic Jacobi block matrices}
\author[R.\ Han]{Rui Han}
\address{Department of Mathematics \\ Louisiana State University  \\  Baton Rouge, LA 70803, USA}
\email{rhan@lsu.edu}
\author[W.\ Schlag]{Wilhelm Schlag}
\address{Department of Mathematics \\ Yale University \\ New Haven, CT 06511, USA}
\email{wilhelm.schlag@yale.edu}
\thanks{
The authors gratefully acknowledge  partial support by the NSF; R.\ Han through DMS-2143369, and 
W.\ Schlag through DMS-2350356.}
\begin{document}

\begin{abstract}
In this paper, we prove H\"older continuity of the integrated density of states for discrete quasiperiodic Jacobi $d\times d$ block matrices with Diophantine frequencies. 
The H\"older exponent is shown to be any $\beta$ such that $0<\beta<1/(2\kappa^d)$, where $\kappa^d$ is the acceleration, i.e.,  the slope of the sum of the top $d$ Lyapunov exponents in the imaginary direction of the phase.
This generalizes the H\"older continuity results in the Schr\"odinger operator setting in \cites{GS2,HS1}, and also strengthens them in that setting by covering more Diophantine frequencies.
The proof is built on a new scheme for obtaining a local zero count for finite-volume characteristic polynomials from a global one.
\end{abstract}

\maketitle

\section{Introduction}

In this paper we study Schr\"odinger cocycles on a strip with Hamiltonian
\begin{equation}\label{eq:BVsys}
   (H_{\omega,\theta} \Phi)_n   = B_{n+1}(\theta)\Phi_{n+1}+ B_n^{(*)}(\theta) \Phi_{n-1}+ V_n (\theta)\Phi_n
\end{equation}
where $F_n(\theta):=F(\theta+n\omega)$ for any $d\times d$-matrix valued function. We set $B^{(*)}(\theta)=(B(\theta))^*$ for $\theta\in \T$, and require it to be the analytic extension of $(B(\theta))^*$ off of the real torus. Here $\omega\in\T$ and we assume that $\omega$ is Diophantine, i.e.,
\begin{equation}\begin{aligned}\label{def:DC}
    \omega\in \mathrm{DC}:=&\bigcup_{a>0, A>1} \mathrm{DC}_{a,A}, \text{ where}\\
    \mathrm{DC}_{a,A}=&\left\{\omega\in \T:\, \|k\omega\|_{\T}\geq \frac{a}{|k|^A}\, \text{ for all } k\in \Z\setminus \{0\}\right\}.
\end{aligned}  
\end{equation}
We further assume that $B,V\in C^{\omega}(\T_{\delta}, \Mat(d,\C))$ are analytic, where $$\T_{\delta}:=\{\theta+i\varepsilon:\, \theta\in \T,\, \varepsilon\in \R,  \text{ and } |\varepsilon|\leq \delta\}$$with some positive $\delta>0$. 
We assume throughout the paper that $V$ is Hermitian, and that~$B$ is invertible ($\det B(\theta)\neq 0$ for any $\theta\in \T_{\delta}$). 
The difference equation $H_{\omega,\theta} \Phi=E\Phi$ is equivalent to the cocycle 
\begin{align}\label{def:M_E}
  \calC &\::  (\theta,\Psi)\in \T\times \C^{2d}\mapsto (\theta+\omega, M_E(\theta)\Psi), \notag  \\
  \quad M_{E}(\theta) &= \left [ \begin{matrix}
   ( E-V(\theta))B(\theta)^{-1}  & -B^{(*)}(\theta) \\
  B(\theta)^{-1} & 0 
\end{matrix}\right]  
\end{align}
in the sense that for $n\ge1$, 
\begin{equation}\label{eq:MnE}
\calC^n (\theta,\Psi)=(\theta+n\omega, M_{n,E}(\theta)\Psi), \quad M_{n,E}(\theta)=\prod_{j=n-1}^0 M_E(\theta+j\omega), \quad 
\Psi_n:=\binom{B_{n}\Phi_n}{\Phi_{n-1}}
\end{equation} 
satisfies $\Psi_n = M_{n,E} (\theta) \Psi_0$.  The model~\eqref{eq:BVsys} was studied systematically by S.~Klein~\cite{SK_block}, and Duarte and Klein~\cites{DK1, DK2} within a very general framework, including the identically singular case when $\det B(\theta)\equiv 0$ on $\T^k$, $k\geq 1$. These models generalize the quasi-periodic Schr\"odinger Hamiltonians on a strip considered by Bourgain and Jitomirskaya~\cite{BJ}, who proved localization perturbatively for large disorder. 

Since $M_E(\theta)$ is (complex) symplectic, see \eqref{def:symp}, for $\theta\in \T$, the Lyapunov exponents $\{L_j(\omega,M_E)\}_{j=1}^{2d}$, see definition in \eqref{def:LE}, satisfy $L_{2d+1-j}=-L_j$ for $1\le j\le d$. 
Let $L^d(\omega,M_E)$ be the sum of the top $d$ Lyapunov exponents, and let $\kappa^d(\omega,M_E)$ be the acceleration, defined to be the right-derivative of $L^d(\omega,M_E(\cdot+i\varepsilon))$ in the imaginary direction $\varepsilon$, see \eqref{eq:acc}.

Throughout the paper we shall fix an energy $E_0\in \R$ such that $L_d(\omega,E_0)\geq \tau>0$. It is known, see \cite{AJS}, that $L^d(\omega,M_E(\cdot+i\varepsilon))$ and $L_d(\omega,M_E)$ are continuous in $E,\varepsilon$ and $\kappa^d(\omega,M_E)$ is upper semi-continuous in $E,\varepsilon$, hence we can assume that in a neighborhood $I_{E_0}$ of $E_0$, the following holds uniformly in $E\in I_{E_0}$ and $|\varepsilon|\leq \delta$, $\varepsilon\in \R$, for some $\delta=\delta(E_0)>0$:
\begin{align}\label{def:IE0}
\begin{cases}
L_d(\omega,M_E(\cdot+i\varepsilon))\geq \tau/2,\\
\kappa^d(\omega,M_E(\cdot+i\varepsilon))\leq \kappa^d(\omega,M_{E_0})\\
L^d(\omega,M_E(\cdot+i\varepsilon))\leq L^d(\omega,M_E)+2\pi \kappa^d(\omega,M_{E_0})\varepsilon.
\end{cases}
\end{align}
Note the third inequality is a corollary of the second one.

The density of states $\mathcal{N}(\omega,\cdot)$ is the limiting cumulative distribution function of the finite-volume eigenvalues.  
Let $H_{\omega,\theta}|_{\Lambda}$ be the restriction of $H_{\omega,\theta}$ to the interval $\Lambda=[a,b]\subset \Z$.
Let $\{E_{\Lambda,j}(\omega,\theta)\}_{j=1}^{|\Lambda|}$ be the eigenvalues of $H_{\omega,\theta}|_{\Lambda}$.
Consider
\begin{align}
    N_{\Lambda}(\omega,E,\theta):=\frac{1}{|\Lambda|}\sum_{j=1}^{|\Lambda|}\chi_{(-\infty,E)}(E_{\Lambda,j}(\omega,\theta)).
\end{align}
It is well-known that the weak-limit
\begin{align}
    \lim_{a\to-\infty, b\to \infty}\mathrm{d} N_{\Lambda}(\omega,E,\theta)=:\mathrm{d} \mathcal{N}(\omega,E).
\end{align}
exists and is independent of $\theta$. In this paper we prove the following result.

\begin{theorem}\label{thm:main}
Let $E_0\in \R$ be a fixed energy for which $L_d(\omega,E_0)>0$.
For any $\omega\in \mathrm{DC}$, the integrated density of states of $H_{\omega,\theta}$ satisfies
    \begin{align}
        |\mathcal{N}(\omega,E)-\mathcal{N}(\omega,E')|\leq |E-E'|^{\beta},
    \end{align}
    for any $E,E'\in I_{E_0}$, a neighborhood of $E_0$, and any H\"older exponent $0<\beta<1/(2\kappa^d(\omega,E_0))$. 
\end{theorem}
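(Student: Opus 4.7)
The plan is to follow the Goldstein--Schlag scheme of counting zeros of the finite-volume characteristic polynomial via Jensen's formula, adapted to the block-Jacobi cocycle~\eqref{def:M_E} and supplemented by the new local-to-global zero count announced in the abstract. First I would reduce the H\"older continuity of $\mathcal{N}$ to a zero-counting problem: for any $N$,
\[
\mathcal{N}(\omega,E)-\mathcal{N}(\omega,E')=\frac{1}{N}\int_{E'}^{E}\nu_N(E^*)\,dE^*+\mathrm{error}(N),
\]
where $\nu_N(E^*):=\#\{\theta\in\T:f_N(E^*,\theta)=0\}$ and $f_N(z,\theta):=\det(H_{\omega,\theta}|_{[1,N]}-z)$, obtained by averaging the finite-volume IDS over $\theta$ and applying Fubini in the $(E^*,\theta)$ zero locus. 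Controlling $\nu_N(E^*)$ and the finite-volume error in a way that yields the target H\"older exponent is the main task.

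Next I would relate $\log|f_N|$ to the sum of the top~$d$ Lyapunov exponents through the block-Jacobi analogue of the Thouless/Herman identity
\[
|f_N(z,\theta)|\,\prod_{j=1}^{N}|\det B_j(\theta)|\asymp\|\wedge^d M_{N,z}(\theta)\|.
\]
Together with the acceleration hypothesis~\eqref{def:IE0}, this yields the uniform upper bound
\[
\tfrac{1}{N}\log|f_N(E^*,\theta+i\varepsilon)|\le L^d(\omega,E^*)+2\pi\kappa^d(\omega,E_0)\,|\varepsilon|+o(1),
\]
while a large deviation estimate, based on $L_d\ge\tau/2$, analyticity of the cocycle, and the Diophantine condition, supplies the matching lower bound $\tfrac{1}{N}\log|f_N(E^*,\theta_0)|\ge L^d(\omega,E^*)-o(1)$ for $\theta_0$ outside an exceptional set $\mathcal{E}_N(E^*)\subset\T$ of measure $\le e^{-N^\gamma}$. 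Jensen's formula applied on a disk $D(\theta_0,\delta)\subset\T_\delta$ with these two bounds yields the \emph{global} zero count $\nu_N(E^*)\lesssim N(2\pi\kappa^d(\omega,E_0)+o(1))$. By itself this only produces a Lipschitz-type estimate with $N$-dependent error and is insufficient to survive the passage to the limit~$\mathcal{N}$.

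The crux of the argument, and the main novelty of the paper, is to upgrade this global bound to a \emph{local} zero count on small arcs. For a good $\theta_0\notin\mathcal{E}_N(E^*)$, Jensen on a disk of radius $r\gg\delta'$ delivers
\[
\#\{\theta\in D(\theta_0,\delta'):f_N(E^*,\theta)=0\}\lesssim\frac{N(2\pi\kappa^d\,r+o(1))}{\log(r/\delta')}.
\]
The three free parameters $N$, $r$, $\delta'$ are then to be chosen as suitable powers of $|E-E'|$ so that (i) the exceptional set $\mathcal{E}_N(E^*)$ is negligible uniformly for $E^*\in(E',E]$, and (ii) the local counts, aggregated over a covering of $\T$ by such arcs, combine with the $E^*$-integration to produce the H\"older bound $|\mathcal{N}(\omega,E)-\mathcal{N}(\omega,E')|\le|E-E'|^\beta$. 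A careful optimization balancing the Jensen numerator $2\pi\kappa^d\,r$ against the denominator $\log(r/\delta')$ and the cost of the covering should produce the range $\beta<1/(2\kappa^d(\omega,E_0))$, with the factor~$2$ arising from the symmetric roles of the two radii in the Jensen ratio. Importantly, since the Diophantine condition is used only to suppress $\mathcal{E}_N(E^*)$ and not to rescale the Jensen disks, the exponent $\beta$ is independent of the Diophantine exponent~$A$ of~\eqref{def:DC}, thereby strengthening the Schr\"odinger-case results of~\cites{GS2,HS1}.

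The hardest step is expected to be the local-to-global transfer itself. For the local Jensen count to be useful, we must select \emph{simultaneously for every} $E^*\in(E',E]$ an admissible center $\theta_0\notin\mathcal{E}_N(E^*)$; this requires tight control over how $\mathcal{E}_N(E^*)$ varies with $E^*$, which I would approach either via a Cartan-type averaging in $E^*$ or via an $\varepsilon$-net combined with the Lipschitz dependence of $f_N$ on $E^*$. Complementarily, the acceleration bound in~\eqref{def:IE0} is naturally a boundary-average statement on $\log\|\wedge^d M_{N,E}\|$, whereas Jensen requires the pointwise upper bound on an interior complex disk; bridging this gap via subharmonicity while keeping the error $o(1)$ uniform as $r\to 0$ is the technical heart of the new scheme, and is the main obstacle I would anticipate in executing the proof.
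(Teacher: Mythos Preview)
Your proposal has a genuine gap at the very first step. The reduction
\[
\mathcal{N}(\omega,E)-\mathcal{N}(\omega,E')=\frac{1}{N}\int_{E'}^{E}\nu_N(E^*)\,dE^*+\mathrm{error}(N),\qquad \nu_N(E^*)=\#\{\theta\in\T:f_N(E^*,\theta)=0\},
\]
is incorrect. Averaging the finite-volume IDS over $\theta$ gives
$\frac{1}{N}\int_{\T}\#\{E^*\in(E',E):f_N(E^*,\theta)=0\}\,d\theta$,
which is \emph{not} the $E^*$-integral of the $\theta$-zero count; Fubini on a one-dimensional zero locus in $\T\times\R$ does not interchange these two counting integrals (already for $N=1$, $f_1(E,\theta)=E-v(\theta)$, your formula produces $\int_{\{v\in(E',E)\}}|v'(\theta)|\,d\theta$ by co-area, not $\mathrm{mes}\{\theta:v(\theta)\in(E',E)\}$). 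Consequently the Jensen-disk optimization you outline, even if executed carefully, bounds the wrong quantity and cannot control $\mathcal{N}(\omega,E)-\mathcal{N}(\omega,E')$.

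The paper proceeds quite differently. It never integrates $\theta$-zero counts over $E^*$; instead it bounds the eigenvalue count in $(E-\eta,E+\eta)$ by the imaginary part of the resolvent trace,
\[
d_{\omega,dN}(\theta,E-\eta,E+\eta)\le\frac{2\eta}{dN}\sum_{k}\mathrm{Im}\,G^{E+i\eta}_{[0,dN-1]}(\theta;k,k),
\]
and estimates each diagonal Green's function entry via Cramer's rule. The local zero count enters only to lower-bound the denominator $|f_{2m}(\theta+a_k\omega/d,E+i\eta)|$. The key new mechanism is not a Jensen radius optimization but a pigeonhole over \emph{shifted} determinants: since $f_n(z,E)$ has $\le 2n\kappa^d(1+o(1))$ zeros in the annulus (the global count) and the Diophantine condition makes the balls $B(e^{-2\pi ik\omega}z_0,r_n)$, $|k|<(1-\epsilon)n/2$, pairwise disjoint, some shift $f_n(e^{2\pi ik_0\omega}z,E)$ has at most $2\kappa^d$ zeros in $B(z_0,r_n)$. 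This immediately yields the local factorization with $\le 2\kappa^d$ linear factors, and the H\"older exponent $1/(2\kappa^d)$ then drops out of a three-case Green's function analysis with a threshold parameter $\tau=\eta^{1/(2\kappa^d)}$. Your proposal misses both the resolvent-trace framework and the shift-pigeonhole idea that the paper identifies as its main novelty.
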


\begin{remark}
    The exponent $2\kappa^d(\omega,E)$ comes from the local zero count of the finite-volume characteristic polynomials, which is derived from the global zero count. When $d\geq 2$, this is the zero count of $f_n(\theta,E)$, see \eqref{def:fn}. If the blocks $B,V$ satisfy additional symmetries, then it is possible to deduce  smaller local zero counts, and hence improved H\"older exponents. We refer interested readers to \cite[Theorem 1.4 \& Theorem 1.6]{HS3} where the effects of additional symmetries are studied. 
\end{remark}

As for the history of this type of regularity result: $\log$--H\"older continuity was established by Craig and Simon in \cite{CS} for general ergodic potentials. The stronger H\"older continuity of the integrated density of states associated with~\eqref{eq:BVsys} was first proved by Goldstein and the second author in \cite{GS1} for the scalar case $d=1$.
Later, \cite{GS2}*{Theorem 1.1} established that for a trigonometric polynomial $f$ of degree $k_0\ge1$ and assuming $\omega\in \mathrm{DC}_{\mathrm{strong}}$, see \eqref{eq:SDC}, and 
$L(\omega,E)>0$  the IDS is $\beta$-H\"older continuous for any $\beta<1/(2k_0)$. 
This result, for the same set of frequencies, was improved to $\beta<1/(2\kappa^1(\omega,E))$ in \cite{HS1}. Note for trigonometric $f$ one has $\kappa^1(\omega,E)\leq k_0$.
Theorem~\ref{thm:main}, when $d=1$, is an improvement of \cite{HS1}*{Theorem 1.3} in the sense that it covers more general Diophantine frequencies: $\mathrm{DC}_{\mathrm{strong}}\subset \mathrm{DC}$. When $d\geq 2$, Theorem~\ref{thm:main} is completely new.
 
For the almost Mathieu operator and $\alpha\in \mathrm{DC}$, the IDS was proved to be $1/2$-H\"older if $|\lambda|\neq 0,1$, see Avila, Jitomirskaya~\cite{AJ}.
The $1/2$-H\"older exponent was also proved for Diophantine $\alpha$ and $f=\lambda\, g$ with $g$ being analytic and the coupling constant $|\lambda|$ being small, first in the perturbative regime (smallness depends on $\alpha, g$) by Amor~\cite{Amor}, and then in the non-perturbative regime \cite{AJ} (with dependence on $\alpha$ removed).
For quasi-periodic long-range operators with large trigonometric polynomial potentials and Diophantine frequencies, the H\"older exponent in \cite{GS2} was improved recently in the perturbative regime~\cite{GYZ}.
These results are proved using the reducibility method.

For the proof of Theorem~\ref{thm:main} we first single out the Schr\"odinger operator case, where $d=1$, $B\equiv 1$ and $V=v$ is a non-constant analytic function. Sections~\ref{sec:sch1}, \ref{sec:sch2} present this case in details. The argument is variant of the one in~\cite{GS2}, but is simpler and more robust.
The difference lies in how we derive a favorable local zero count for a finite volume determinant relative to some interval $\Lambda\subset\Z$. In~\cite{GS2}, this is accomplished by slightly varying the size of~$\Lambda$, but not the position. Here we keep the size fixed but change the position. The strip case ($d>1$ in~\eqref{eq:BVsys}) follows the same outline but is more involved because it relies on zero counting techniques that apply to the Jacobi block case. Thankfully,  these were developed in~\cite{HS3}, so we can treat them as a black box, cf.~Theorem~\ref{thm:acc=zeros}. 

We also point out that the Aubry dual of a subcritical Schr\"odinger operator $H$, with trigonometric potential with degree $d$, is a special family of $d\times d$ Jacobi block-valued matrix $\hat{H}$ with uniformly positive $L_d(\omega,E)>0$, see e.g.~\cite[Corollary 1.12]{HS2}. An easy computation shows that $\kappa^d(\omega,E)\leq d$, but one can show the local zero count is at most $2$, instead of $2d$. This is a small modification of Lemma \ref{lem:local_zero*} with considering not only $f_n(e^{2\pi ik\omega}z,E)$ with $|k|<(1-\varepsilon)n/2$, but also $f_n(e^{2\pi ik\omega/d}z,E)$ with $|k|<(1-\varepsilon)nd/2$. This is a feature due to the special form of $\hat{H}$, and has been explored in the proof of arithmetic Anderson localization of $\hat{H}$ in \cite[Theorem 1.7]{HS2} and \cite[Corollary 1.3]{HS3}. The local zero count being at most $2$ then implies almost $1/2$ H\"older continuity of the integrated density of states of the original subcritical Schr\"odinger with arbitrary trigonometric potentials with Diophantine frequencies.

Finally, we comment that the (non-)H\"older continuity of the integrated density of states for Liouville frequencies has also been studied \cite{YZ,HZ,ALSZ,HS4} when $d=1$, 
and it would be natural to combine those considerations with the techniques developed here for $d\geq 2$.

Our proofs are essentially self-contained. The only ingredients we require are the large deviation estimates, developed in \cites{BG,GS1,HS3} and global zero counts of finite-volume determinants in \cites{HS1,HS3}. 
As we mentioned, the Diophantine condition \eqref{def:DC} is weaker than that in \cites{GS2,HS1} where it was defined to be
\begin{align}\label{eq:SDC}
 \mathrm{DC}_{\mathrm{strong}}:= \bigcup_{a>1, c>0} \left\{\omega\in \T:\, \|k\omega\|_{\T}\geq \frac{c}{|k|(\log |k|)^a}\, \text{ for all } k\in \Z\setminus \{0,\pm 1\}\right\}.
\end{align}
The Anderson localization result of \cite{HS1} holds for the weaker Diophantine condition \eqref{def:DC}, see \cite{HS1}*{Remark 1.9}. However,  the H\"older continuity result of \cite{HS1} holds under \eqref{eq:SDC} due to the need for a sharp large deviation estimate as in \cite{GS2}. 
 
We organize the paper as follows: Section~\ref{sec:Pre} contains the preliminary results. Sections~\ref{sec:sch1} and \ref{sec:sch2} prove Theorem~\ref{thm:main} in the Schr\"odinger case, more specifically, Sec.~\ref{sec:sch1} presents the crucial local factorization of the finite volume characteristic polynomials which lays the foundation of the proof of Theorem~\ref{thm:main} in Sec.~\ref{sec:sch2}. The Jacobi block-valued case is proved in Sections~\ref{sec:block1} and \ref{sec:block2}. Finally, the proof of Lemma~\ref{lem:numerator_diag} is contained in Sec.~\ref{sec:numerator}.

\section{Lyapunov exponents, large deviations, and the Green's function}\label{sec:Pre}

Throughout, we adhere to the following notations. 
For a function $g$ on $\T$, we denote its $L^p(\T)$ norm by $\|g\|_{\T, p}$, and we write $\langle g\rangle:=\int_{\T } g(\theta)\, \mathrm{d}\theta$ for averages.
For $x\in \R$, let $\|x\|_{\tor}:=\mathrm{dist}(x,\Z)$ be the distance to the nearest integer vector. 
Let $\kreis^1:=\{z\in \C:\, |z|=1\}$ be the unit circle.
For a set $U\subset\R$, let $\mathrm{mes}(U)$ be its Lebesgue measure.
In various notations we shall replace $\theta+i\varepsilon$ with the complex variable $z$ when $z=e^{2\pi i(\theta+i\varepsilon)}$.

\subsection{Transfer matrices}
As we mentioned in the introduction, $M_E$ as in \eqref{def:M_E} is the transfer matrix associated to the block-valued operator $H_{\theta}$ in \eqref{eq:BVsys}.
A complex matrix $M\in \mathrm{Mat}(2d,\C)$ is symplectic if 
\begin{align}\label{def:symp}
    M^*\Omega M=\Omega,
\end{align}
where 
\begin{align}
    \Omega=\left(\begin{matrix} 0 & I_d\\ -I_d &0\end{matrix}\right).
\end{align}
One can easily verify that for $M_E$ as in \eqref{def:M_E}, and $E\in \R
$, $\theta\in \T$,
\begin{align}
    (M_E(\theta))^* \Omega M_E(\theta)=\Omega.
\end{align}
However for $\theta\in \T_{\delta}\setminus \T$, $M_E(\theta)$ is in general not symplectic.
$M_{n,E}$ is the $n$-step propagator, see~\eqref{eq:MnE}. 

\subsection{Lyapunov exponents}
Let $(\omega, A)\in (\T, C^{\omega}(\T, \mathrm{Mat}(k,\C)))$. 
Let 
\begin{align}
A_n(\omega,\theta)=A(\theta+(n-1)\omega)\cdots A(\theta).
\end{align}
Let the finite-scale and infinite-scale Lyapunov exponents be defined as 
\begin{align}\label{def:LE}
L_j(\omega, A,n):=\frac{1}{n}\int_{\T} \log \sigma_j(A_n(\omega,\theta))\, \mathrm{d}\theta, \text{ for } 1\leq j\leq k,
\end{align}
where $\sigma_j(A)$ is the $j$-th singular value of $A$, and the $j$-th Lyapunov exponent
\begin{align}
L_j(\omega, A)=\lim_{n\to\infty}L_j(\omega, A,n).
\end{align}
It is easy to see that for $1\leq j\leq k$,
\begin{align}
L^j(\omega, A,n):=\sum_{\ell=1}^j L_{\ell}(\omega, A,n)=\frac{1}{n}\int_{\T}\log \|\textstyle{\bigwedge^j} A_n(\omega,\theta)\|\, \mathrm{d}\theta,
\end{align}
where $\textstyle{\bigwedge^j} A$ is the $j$-th exterior power of $A$. Similarly $L^j(\omega,A)=\sum_{\ell=1}^j L_{\ell}(\omega, A)$.
We also denote the phase complexified Lyapunov exponents 
\begin{align}
\begin{cases}
    L_j(\omega,A(\cdot+i\varepsilon),n)=:L_{j,\varepsilon}(\omega,A,n),\\
    L_j(\omega,A(\cdot+i\varepsilon))=:L_{j,\varepsilon}(\omega,A),\\
    L^j(\omega,A(\cdot+i\varepsilon),n)=:L^j_{\varepsilon}(\omega,A,n),\\
    L^j(\omega,A(\cdot+i\varepsilon))=:L^j_{\varepsilon}(\omega,A),
\end{cases}
\end{align}
here $\varepsilon\in \R$.
Since $M_E(\theta)$ is symplectic for $\theta\in \T$, for each $1\leq j\leq d$ and $n\geq 1$,  we have
\begin{align}
    \sigma_j(M_{n,E}(\theta))=1/\sigma_{2d+1-j}(M_{n,E}(\theta)),
\end{align}
and hence
\begin{align}
    L_{j,\varepsilon=0}(\omega,M_E,n)=-L_{2d+1-j,\varepsilon=0}(\omega,M_E,n).
\end{align}
However the above is in general not true if $\varepsilon\neq 0$.

\subsection{Avila's acceleration}
Let $(\omega, A)\in (\T, C^{\omega}(\T, \mathrm{SL}(2, \R)))$.
The (top) Lyapunov exponent $L^1_{\varepsilon}(\omega,A)=L_{1,\varepsilon}(\omega, A)$ is a convex and even function in $\varepsilon$.
Avila defined the acceleration to be the right-derivative as follows:
\begin{align}\label{eq:acc}
\kappa^1_{\varepsilon}(\omega, A):=\lim_{\varepsilon'\to 0^+} \frac{L^1_{\varepsilon+\varepsilon'}(\omega, A)-L^1_{\varepsilon}(\omega, A)}{2\pi \varepsilon'}.
\end{align}
As a cornerstone of his global theory \cite{Global}, he showed that for $A\in \mathrm{SL}(2,\R)$ and irrational $\alpha$, $\kappa_{\varepsilon}^1(\omega, A)\in \Z$ is always quantized. 
The concept of acceleration was further extended to  $(\omega,A)\in (\T,C^{\omega}(\T, \mathrm{Mat}(k,\C)))$ by Avila, Jitomirskaya, and Sadel~\cite{AJS}, where for $1\leq j\leq k$,
\begin{align}
    \kappa^j_{\varepsilon}(\omega,A):=\lim_{\varepsilon'\to 0^+} \frac{L^j_{\varepsilon+\varepsilon'}(\omega, A)-L^j_{\varepsilon}(\omega, A)}{2\pi \varepsilon'}.
\end{align}
For the remainder of the paper, when $\varepsilon=0$, we shall omit $\varepsilon$ from various notations involving Lyapunov exponents and accelerations.
On some occasions, we shall also omit $\omega$ and $M_E$ in $L^j(\omega,M_E)$, $L_j(\omega,M_E)$ and $\kappa^d(\omega,M_E)$.
We will require the following tools. 
Note that we do not distinguish the various $\gamma$'s in the following Lemmas~\ref{lem:Ln-L}, \ref{lem:upperbd}, \ref{lem:upperbd_E'}, \ref{lem:LDTsig}, \ref{lem:un_LDT}, \ref{lem:numerator_diag}, \ref{lem:deno}, and Theorems~\ref{thm:acc=zeros}, \ref{thm:acc=zeros*}.

\subsection{Large deviation estimates for monodromy matrices}

The following rate of convergence holds for Lyapunov exponents, see~\cite{GS1}*{Lemma 10.1}. 

\begin{lemma}\label{lem:Ln-L}
Let $\omega\in \mathrm{DC}$.
Suppose $L_d(\omega,M_E)\geq \nu>0$, then there exists $\gamma>0$ such that for any $|\varepsilon|\leq \delta$ and $n\geq n(\nu)$, we have
\begin{align}
    L^d_{\varepsilon}(\omega,M_E)\leq L^d_{\varepsilon}(\omega,M_E,n)\leq L^d_{\varepsilon}(\omega,M_E)+n^{-\gamma}.
\end{align}
\end{lemma}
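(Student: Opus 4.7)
The lower bound $L^d_{\varepsilon}(\omega,M_E)\leq L^d_{\varepsilon}(\omega,M_E,n)$ is immediate from subadditivity: because $\bigwedge^d$ is multiplicative along the factorization in \eqref{eq:MnE}, the sequence $n\,L^d_\varepsilon(\omega,M_E,n)$ is subadditive in $n$, so by Fekete's lemma the limit equals the infimum. The real content of Lemma~\ref{lem:Ln-L} is therefore the quantitative upper bound, and the plan is to follow the scheme of \cite{GS1}*{Lemma 10.1}.

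The two main tools are a large deviation theorem (LDT) and the Avalanche Principle (AP) applied to $\bigwedge^d M_{n,E}$. The functions
\begin{align*}
u_n(\theta) \,:=\, \tfrac{1}{n}\log\bigl\|\textstyle\bigwedge^d M_{n,E}(\theta+i\varepsilon)\bigr\|
\end{align*}
are restrictions to $\T$ of plurisubharmonic functions on $\T_\delta$ with uniform $L^\infty$ envelope. Under the Diophantine hypothesis \eqref{def:DC}, the subharmonic-function LDT developed in \cites{BG,GS1,HS3} yields, for some $\gamma_0=\gamma_0(a,A,\nu,\delta)>0$,
\begin{align*}
\mathrm{mes}\bigl\{\theta\in\T :\, |u_n(\theta)-\langle u_n\rangle|>n^{-\gamma_0}\bigr\}\,\leq\, e^{-n^{\gamma_0}}.
\end{align*}
The hypothesis $L_d(\omega,M_E)\geq\nu>0$, combined with the analogous LDT for $\bigwedge^{d-1} M_{n,E}$, further produces a majority set of $\theta$ on which $M_{n,E}(\theta+i\varepsilon)$ has a spectral gap of size $\gtrsim e^{n\nu/4}$ between its $d$-th and $(d+1)$-st singular values.

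Applying the AP to the factorization $\bigwedge^d M_{2n,E}(\theta) = \bigwedge^d M_{n,E}(\theta+n\omega)\cdot \bigwedge^d M_{n,E}(\theta)$, this gap together with the LDT gives, on a set of $\theta$ of measure $\geq 1-e^{-cn^{\gamma_0}}$ (the Diophantine condition keeps the shifted LDT--bad set small),
\begin{align*}
u_{2n}(\theta) \,\geq\, u_n(\theta+n\omega) + u_n(\theta) - C e^{-c n}.
\end{align*}
Integrating in $\theta$ and absorbing the complement via the deterministic $L^\infty$-bound on $u_n$ yields $\langle u_{2n}\rangle \geq 2\langle u_n\rangle - C n^{-\gamma_0}$, which combined with the subadditive monotonicity $\langle u_{2n}\rangle\leq \langle u_n\rangle$ gives $\langle u_n\rangle - \langle u_{2n}\rangle\leq Cn^{-\gamma_0}$. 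A dyadic telescoping sum $\langle u_n\rangle - L^d_\varepsilon(\omega,M_E) = \sum_{k\geq 0}(\langle u_{2^k n}\rangle - \langle u_{2^{k+1}n}\rangle)$ then delivers the claimed $n^{-\gamma}$ rate for any $\gamma\in(0,\gamma_0)$.

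The principal obstacle is ensuring that the LDT constants and the singular-value gap are uniform over $(E,\varepsilon)\in I_{E_0}\times[-\delta,\delta]$, given that the Lyapunov positivity is only assumed at $\varepsilon=0$ and $E=E_0$. This is handled by the joint continuity of individual Lyapunov exponents \cite{AJS}, which preserves the gap after shrinking $I_{E_0}$ and $\delta$ as already arranged in \eqref{def:IE0}, together with the fact that the subharmonic LDT constants depend only on the Diophantine data $(a,A)$ and on the uniform upper envelope of $u_n$, both of which are controlled on $\T_\delta$ by the analyticity of $B,V$.
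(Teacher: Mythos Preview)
The paper does not give a self-contained proof of this lemma; it simply refers to \cite{GS1}*{Lemma~10.1} (and \cite{DK1}) and remarks that the argument rests on the Avalanche Principle under the gap hypothesis $L_d>0$. Your outline is therefore being compared against that one-line description, and at that level it correctly identifies the two ingredients (LDT plus AP) and correctly disposes of the lower bound via Fekete's lemma.

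There is, however, a genuine gap in your sketch of the upper bound. Applying the Avalanche Principle to the \emph{two-block} factorization $\bigwedge^d M_{2n}=\bigwedge^d M_n(\cdot+n\omega)\cdot\bigwedge^d M_n$ is vacuous: for a chain of length~$2$ the AP conclusion reads $|\log\|A_2A_1\|-\log\|A_2A_1\||<C/\mu$. What you are really asserting is the pointwise near-multiplicativity $\log\|AB\|\ge\log\|A\|+\log\|B\|-Ce^{-cn}$, but the singular-value gap alone does not give this: if $A,B$ each have large ratio $\sigma_1/\sigma_2$, one only gets $\|AB\|\gtrsim|\langle v_1(A),u_1(B)\rangle|\,\|A\|\,\|B\|$, and nothing prevents the inner product from vanishing. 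Attempting to supply this alignment from the LDT at scale $2n$ is circular, since the LDT lower bound on $\|M_{2n}\|$ together with the LDT upper bound on $\|M_n\|$ yields precisely $\log\|AB\|-\log\|A\|-\log\|B\|\ge 2n(L_{2n}-L_n)-Cn^{1-\gamma_0}$, which inputs the very quantity you want to bound. (There is also a normalization slip: with $u_m:=m^{-1}\log\|\cdot\|$ the displayed inequality $u_{2n}\ge u_n(\cdot+n\omega)+u_n-Ce^{-cn}$ and its integrated form $\langle u_{2n}\rangle\ge 2\langle u_n\rangle-Cn^{-\gamma_0}$ are off by a factor of~$2$; taken literally they would force $L_n\le Cn^{-\gamma_0}$.)

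The actual \cite{GS1} scheme applies the AP to a \emph{long} chain: one writes $M_{N}=\prod_{j=0}^{m-1}M_n(\cdot+jn\omega)$ with $m=N/n$ large, verifies the gap from $L_d>0$ and the alignment hypothesis from the LDT at scales $n$ and $2n$ (here only the qualitative input $L_n-L_{2n}<\tfrac{1}{4}L_n$, available once $n\ge n_0(\nu)$, is needed), and obtains after integration the three-term relation $|L_{N}+L_n-2L_{2n}|\le C(m\,e^{-n^{\gamma_0}}+m^{-1}+e^{-cn})$. Iterating this at super-exponentially growing scales and telescoping yields $L_n-L\le Cn^{-\gamma}$. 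The long chain, not the doubling, is what makes the AP nontrivial.
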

The proof of such effective convergence rate uses the Avalanche Principle, which requires the gap condition $L_d(\omega,M_E)>0$, see \cites{GS1, DK1}. 

Lemma~\ref{lem:upperbd} states a uniform upper bound on monodromy matrices. These are quite standard, see for example~\cite{GS2}*{Section 4}. We have no need for the sharper upper bound of~\cite{GS2}*{Proposition~4.3}, which requires the stronger Diophantine condition~\eqref{eq:SDC}. Instead, the following estimate holds. 

\begin{lemma}\label{lem:upperbd}
For $\omega\in \mathrm{DC}$,
there exists $\gamma>0$ so that for all large $n$, one has 
\begin{align}
\frac{1}{n} \log \big \|\textstyle{\bigwedge^d} M_{n,E}(\theta+i\varepsilon)\big\|\leq {L}^{d}_{\varepsilon}(\omega,M_E)+ n^{-\gamma},
\end{align}
uniformly in $\theta\in \tor$ and $|\varepsilon|\leq \delta$, $\varepsilon\in\R$. 
\end{lemma}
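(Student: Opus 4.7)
The plan is to combine cocycle submultiplicativity, effective equidistribution along the Diophantine orbit, and Lemma~\ref{lem:Ln-L}. Set
\[
u_n(\theta+i\varepsilon) \;:=\; \tfrac{1}{n}\log\bigl\|\textstyle\bigwedge^d M_{n,E}(\theta+i\varepsilon)\bigr\|.
\]
Since $B,V$ are analytic on $\mathbb{T}_\delta$, $u_n$ is subharmonic in $z=\theta+i\varepsilon$ on $\mathbb{T}_\delta$, and uniformly bounded above by $C_0:=\log\sup_{\mathbb{T}_\delta}\|\bigwedge^d M_E\|$. Note this bound is already enough for the corresponding statement with a large positive constant in place of $n^{-\gamma}$; the content of the lemma is the polynomial rate.

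Write $n=mk+r$ with $0\le r<m$, and iterate Horn's inequality $\|\bigwedge^d(AB)\|\le\|\bigwedge^d A\|\,\|\bigwedge^d B\|$ along the cocycle decomposition $M_{n,E}(\theta)=M_{r,E}(\theta+mk\omega)\cdot\prod_{j=k-1}^{0}M_{m,E}(\theta+jm\omega)$ to obtain
\[
u_n(\theta+i\varepsilon) \;\le\; \frac{1}{k}\sum_{j=0}^{k-1} u_m(\theta+jm\omega+i\varepsilon) \;+\; \frac{C_0\,r}{n}.
\]
For each fixed $|\varepsilon|\le\delta$ the function $u_m(\cdot+i\varepsilon)$ is subharmonic and uniformly bounded on $\mathbb{T}_\delta$ (with bound independent of $m$), so the standard Bourgain--Goldstein--Schlag subharmonic toolkit applies: using the Riesz representation to obtain a Fourier bound of the form $|\widehat{u_m}(\ell,\varepsilon)|\le C(\delta)/|\ell|$, combining with the Diophantine Weyl estimate $|k^{-1}\sum_{j<k}e^{2\pi i\ell jm\omega}|\le\min(1,a^{-1}|\ell m|^A/k)$, and truncating the Fourier series at a height of order $(k/m^A)^{1/A}$ (with a mild logarithmic refinement to absorb the slow $1/|\ell|$ tail), one arrives at the uniform-in-$\theta$ estimate
\[
\frac{1}{k}\sum_{j=0}^{k-1} u_m(\theta+jm\omega+i\varepsilon) \;\le\; \bigl\langle u_m(\cdot+i\varepsilon)\bigr\rangle \;+\; C\,m^{\sigma}\,k^{-\gamma'},
\]
for exponents $\sigma,\gamma'>0$ depending only on $a,A,\delta$.

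To conclude, invoke Lemma~\ref{lem:Ln-L} to replace $\langle u_m(\cdot+i\varepsilon)\rangle = L^d_\varepsilon(\omega,M_E,m)$ by $L^d_\varepsilon(\omega,M_E)+m^{-\gamma_0}$, and choose $m=\lfloor n^\alpha\rfloor$ with $\alpha\in(0,\gamma'/(\gamma'+\sigma))$ small enough that each of the three error contributions $m/n$, $m^{\sigma}k^{-\gamma'}$, and $m^{-\gamma_0}$ is controlled by a common power $n^{-\gamma}$. The principal technical obstacle is the quantitative equidistribution step: because $u_m$ is only subharmonic (not analytic) and its Fourier decay is merely $O(1/|\ell|)$, one cannot apply naive Weyl sum estimates to extract a pointwise bound valid for \emph{every} $\theta$. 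The truncation that balances this slow Fourier decay against the polynomial Diophantine denominator $|\ell m|^A$ is precisely where a polynomial Diophantine condition such as \eqref{def:DC} is needed; crucially, we are not chasing the sharp upper bound of \cite{GS2}*{Proposition 4.3}, which would require the stronger condition \eqref{eq:SDC}, so the present weaker form of the lemma remains accessible under \eqref{def:DC} alone.
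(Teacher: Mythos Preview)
Your outline is the standard argument that the paper defers to (it gives no self-contained proof, citing \cite{GS2}*{Section 4}): submultiplicativity to pass to scale $m$, Fourier decay $|\widehat{u_m}(\ell)|\lesssim|\ell|^{-1}$ from the Riesz representation, Weyl sums along the Diophantine orbit, and Lemma~\ref{lem:Ln-L} to replace $L^d_\varepsilon(m)$ by $L^d_\varepsilon$. The scaling $m\sim n^\alpha$ at the end is correct.

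There is, however, one genuine soft spot in your write-up. You state the \emph{pointwise} bound
\[
\frac{1}{k}\sum_{j=0}^{k-1} u_m(\theta+jm\omega+i\varepsilon)\;\le\;\langle u_m(\cdot+i\varepsilon)\rangle + Cm^{\sigma}k^{-\gamma'}
\]
and attribute it to ``truncating the Fourier series \ldots\ with a mild logarithmic refinement to absorb the slow $1/|\ell|$ tail.'' That mechanism does not produce a pointwise inequality: with only $|\widehat{u_m}(\ell)|\lesssim|\ell|^{-1}$, the high-frequency tail $\sum_{|\ell|>K}|\ell|^{-1}$ diverges, and no logarithmic tweak rescues it uniformly in~$\theta$. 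You correctly flag this as the principal obstacle, but you do not say what actually closes it. The Fourier--Weyl computation yields an $L^2(\T)$ bound $\|V-\langle u_m\rangle\|_2\lesssim (m^A/k)^{c}$ for the averaged function $V(\theta):=k^{-1}\sum_j u_m(\theta+jm\omega+i\varepsilon)$; the upgrade to a pointwise \emph{upper} bound then uses that $V$ is itself subharmonic on $\T_\delta$ (being a finite average of shifts of the subharmonic $u_m$) with $\sup V\le C_0$. The submean-value inequality over a disk $D(\theta_0,\rho)\subset\T_\delta$, combined with the $L^2$ bound on each horizontal slice (uniform in the height by Lemma~\ref{lem:Lip_eps}), gives $V(\theta_0)\le\langle u_m\rangle+C\rho+C\rho^{-1/2}(m^A/k)^{c}$, and optimizing in $\rho$ yields the desired pointwise inequality. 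Alternatively, one can handle the log-potential part of the Riesz decomposition of $u_m$ directly via the separation of the orbit points guaranteed by $\omega\in\mathrm{DC}$. Either route is part of the ``toolkit'' you invoke; just be explicit that subharmonicity of the \emph{averaged} function, not Fourier truncation alone, is what delivers the uniform-in-$\theta$ bound.
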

Note that we replaced $L_{\varepsilon}^d(\omega,M_E,n)$ on the right-hand-side of Lemma~\ref{lem:upperbd} with the true Lyapunov exponent  by means of Lemma \ref{lem:Ln-L}.
Strictly speaking, the exterior product does not appear in~\cite{GS2}. But due to the multiplicativity of ${\bigwedge}^j$, the same techniques apply here. 
The uniform upper bound is stable in a neighborhood of $E$, in fact one has the following:
\begin{lemma}\label{lem:upperbd_E'}
Under the same conditions as Lemma \ref{lem:upperbd}, for large $n$, for any $E'\in \C$ such that $|E'-E|<n^{-2}e^{-n^{\gamma}}$,
\begin{align}
\frac{1}{n}\log \big \|\textstyle{\bigwedge^d} M_{n,E'}(\theta+i\varepsilon)\big\|\leq {L}_\varepsilon^{d}(\omega,M_E)+ 2n^{-\gamma},
\end{align}
uniformly in $\theta\in \T$ and $|\varepsilon|\leq \delta$, $\varepsilon\in \R$.
\end{lemma}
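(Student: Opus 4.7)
The lemma asserts that the bound in Lemma \ref{lem:upperbd} is stable under small complex perturbations of the energy. Since $M_E(\theta+i\varepsilon)$ is linear in $E$ with $M_{E'} - M_E = (E'-E)\, P$, where $P(\theta) := \bigl(\begin{smallmatrix} B(\theta)^{-1} & 0 \\ 0 & 0 \end{smallmatrix}\bigr)$ is uniformly bounded on $\T_\delta$, the matrix $\bigwedge^d M_{n, E'}(\theta+i\varepsilon)$ is matrix-valued polynomial of degree at most $nd$ in $E'$. My plan is to telescope the difference $\bigwedge^d M_{n, E'} - \bigwedge^d M_{n, E}$ using multiplicativity of $\bigwedge^d$, bound each factor with Lemma \ref{lem:upperbd}, and use the smallness of $|E'-E|$ to absorb the error into the $2n^{-\gamma}$ slack.

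Concretely,
\[
\bigwedge^d M_{n, E'} - \bigwedge^d M_{n, E} = \sum_{k=0}^{n-1} \bigwedge^d M_{n-k-1, E'}(\theta+(k+1)\omega+i\varepsilon)\, \Delta_k \, \bigwedge^d M_{k, E}(\theta+i\varepsilon),
\]
where $\Delta_k := \bigwedge^d M_{E'}(\theta+k\omega+i\varepsilon) - \bigwedge^d M_E(\theta+k\omega+i\varepsilon)$ has norm at most $C|E'-E|$ (as $\bigwedge^d M_E$ is a polynomial of degree $d$ in $E$). One applies Lemma \ref{lem:upperbd} at $E$ to the right factor, and, after extending the lemma to complex energies $\xi$ in a neighborhood of $E$ (valid since its LDT/avalanche-principle proof is insensitive to the reality of $\xi$, and the positivity $L_d(\omega, M_\xi) \ge \tau/2$ persists on a complex neighborhood of $E_0$ by continuity), to the left factor. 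This gives
\[
\|\bigwedge^d M_{n, E'} - \bigwedge^d M_{n, E}\| \le C n\, |E'-E| \exp\bigl(n L^d_\varepsilon(\omega, M_E) + n\, \omega_L(|E'-E|) + O(n^{1-\gamma})\bigr),
\]
with $\omega_L(R) := \sup_{|\xi - E| \le R} |L^d_\varepsilon(\omega, M_\xi) - L^d_\varepsilon(\omega, M_E)|$ a modulus of continuity of the Lyapunov exponent at $E$.

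The crucial input is a log-H\"older bound $\omega_L(R) \le C/|\log R|$, inherited from the Craig--Simon log-H\"older continuity of the integrated density of states via a Thouless-type relation (classical for $d=1$ and extendable to the Jacobi block setting). The condition $|E'-E| < n^{-2} e^{-n^\gamma}$ is precisely tuned so that $\omega_L(|E'-E|) \le C n^{-\gamma}$, yielding
\[
\|\bigwedge^d M_{n, E'} - \bigwedge^d M_{n, E}\| \le C n^{-1} e^{-n^\gamma} \exp\bigl(n L^d_\varepsilon(\omega, M_E) + C n^{1-\gamma}\bigr) \le e^{n L^d_\varepsilon(\omega, M_E) + 2 n^{1-\gamma}}
\]
for large $n$. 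Combining with $\|\bigwedge^d M_{n, E}\| \le e^{n L^d_\varepsilon(\omega, M_E) + n^{1-\gamma}}$ from Lemma \ref{lem:upperbd} and taking $\frac{1}{n}\log$ yields the claim (possibly after a minor decrease of $\gamma$ to absorb constants). The main obstacle is the quantitative log-H\"older continuity of $L^d_\varepsilon$ in $E$: this is the non-trivial ingredient that sits above Lemma \ref{lem:upperbd}, and it is what dictates the specific form of the hypothesis $|E'-E| < n^{-2} e^{-n^\gamma}$.
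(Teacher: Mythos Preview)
Your telescoping is the right starting point, but the way you bound the left factor $\bigwedge^d M_{n-k-1,E'}$ introduces an unnecessary and unjustified ingredient. You invoke Lemma~\ref{lem:upperbd} at the \emph{complex} energy $E'$, which outputs $L^d_\varepsilon(\omega,M_{E'})$ rather than $L^d_\varepsilon(\omega,M_E)$, and then appeal to log-H\"older continuity of $E\mapsto L^d_\varepsilon(\omega,M_E)$ via ``Craig--Simon plus a Thouless-type relation'' to close the gap. This is where the argument breaks down: Craig--Simon concerns the IDS on the real energy axis, and the Thouless identity relating it to $L$ is a scalar, $\varepsilon=0$ statement. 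For $d\ge2$ and for the complexified exponent $L^d_\varepsilon$ at $\varepsilon\ne0$ (and at complex $E'$), no such formula is available off the shelf. Even in the scalar case, using regularity of the Lyapunov exponent in energy to prove a monodromy perturbation bound is the wrong direction: lemmas like this one are typically \emph{inputs} to such regularity results, not consequences of them.

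The paper's proof avoids all of this by induction on $n$. With $h_n=\sup_\theta\|\bigwedge^d M_{n,E}(\theta+i\varepsilon)\|$ and $\tilde h_n=\sup_\theta\|\bigwedge^d M_{n,E'}(\theta+i\varepsilon)\|$, the same telescoping gives $\tilde h_n\le h_n+x\sum_{k=1}^{n-1}\tilde h_k\,h_{n-k-1}$ with $x\simeq|E'-E|$. One then shows inductively that $\tilde h_n\le 2C\exp(nL^d_\varepsilon(\omega,M_E)+n^{1-\gamma})$: the inductive hypothesis bounds the $\tilde h_k$ factors directly in terms of $L^d_\varepsilon(\omega,M_E)$ at the \emph{fixed} energy $E$, so $L^d_\varepsilon(\omega,M_{E'})$ never appears and no continuity in $E$ is needed. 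The condition $|E'-E|<n^{-2}e^{-n^\gamma}$ is exactly what makes the induction close (it is the condition $x\le(2Cn)^{-1}e^{-n^b}$ in the recursion lemma), not a shadow of log-H\"older regularity.
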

\begin{proof}
One can prove this lemma inductively based on the following:
\begin{lemma}\label{lem:hn}
        Suppose $\{h_n\}_{n=1}^\infty$ and $\{\tilde{h}_n\}_{n=1}^{\infty}$ are two sequences of nonnegative numbers. Suppose that 
        \begin{align*}
            \tilde{h}_1&\leq h_1+x \\
            \tilde{h}_n&\leq h_n+x\sum_{k=1}^{n-1} \tilde{h}_k h_{n-k-1}.
        \end{align*}
        Assume further that for some constants $C\ge1$, $L>0$ and $0<b<1$, 
        \[
        h_n\le C \exp( nL + n^b)\qquad \forall\; n\ge1
        \]
        Then for any $n\ge1$,
        \[
        \tilde{h}_n\le 2C \exp( nL + n^b)
        \]
        for all $0\le x\le (2Cn)^{-1}e^{-n^b}$. 
    \end{lemma}
\begin{proof}
We give a quick proof of Lemma \ref{lem:hn} by induction in $n$. By inspection, the claim holds for $n=1$. Let $n\ge2$ and assume that it holds up to $n-1$. Then by the recursion, 
    \begin{align*}
        \tilde{h}_n &\leq  C \exp( nL + n^b)  + 2C^2 x\sum_{k=1}^{n-1} \exp( kL + k^b) \exp( (n-k-1)L + (n-k-1)^b) \\
        &\leq  C \exp( nL + n^b)  \big[1+  2Cxn e^{n^b}\big]\leq 2C \exp( nL + n^b) 
    \end{align*}
    provided $0\le x\le (2Cn)^{-1}e^{-n^b}$.
    \end{proof}
    Coming back to the proof of Lemma \ref{lem:upperbd_E'}, we apply Lemma \ref{lem:hn} to $h_n=\sup_{\theta}\|{\bigwedge}^j M_{n,E}(\theta)\|$ and $\tilde{h}_n=\sup_{\theta}\|{\bigwedge}^j M_{n,E'}(\theta)\|$ where $x=|E'-E|$.
\end{proof}

For a general $j\in \{1,...,2d\}\setminus \{d\}$, the following upper bound with error $o(1$) is sufficient. We do not have access to the sharper estimate for the top Lyapunov exponent $L^d$ as in Lemma~\ref{lem:Ln-L} because of the possible absence of a gap condition.
\begin{lemma}\label{lem:upper_j_neq_d}
    For $\omega\in \mathrm{DC}$, for any $E'\in \C$ such that $|E'-E|<o(1)$, for all large $n$, one has
    \begin{align}
        \frac{1}{n} \log \big \|\textstyle{\bigwedge^j} M_{n,E'}(\theta+i\varepsilon)\big\|\leq {L}^{j}_{\varepsilon}(\omega,M_E)+o(1),
    \end{align}
    uniformly in $\theta\in\T$ and $|\varepsilon|\leq \delta$, $\varepsilon\in \R$.
\end{lemma}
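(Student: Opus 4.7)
The plan is to combine the standard pointwise subharmonic upper bound with Fekete's subadditive lemma and upper semi-continuity of the Lyapunov exponent in the energy, in three independent steps.

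First I would apply the same subharmonic function argument as in the proof of Lemma~\ref{lem:upperbd} (detailed in \cite{GS2}*{Section~4}) to the subharmonic function $u_n(z):=\tfrac{1}{n}\log\|\textstyle{\bigwedge^j} M_{n,E'}(z)\|$ on $\T_\delta$, to obtain
\[
\tfrac{1}{n}\log\|\textstyle{\bigwedge^j} M_{n,E'}(\theta+i\varepsilon)\|\leq L^j_\varepsilon(\omega,M_{E'},n)+n^{-\gamma},
\]
uniformly in $\theta\in\T$ and $|\varepsilon|\leq\delta$, for some $\gamma>0$ depending only on $\omega$. This is a purely analytic bound that does not rely on any gap condition; it only requires $\|M_{E'}\|_{L^\infty(\T_\delta)}$ to be uniformly bounded for $E'$ close to $E$, which is automatic. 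The multiplicativity of $\textstyle{\bigwedge^j}$ under matrix products ensures the argument goes through verbatim for every $j$, not only $j=d$.

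Next I would pass from the finite-scale $L^j_\varepsilon(\omega,M_{E'},n)$ to the limiting $L^j_\varepsilon(\omega,M_{E'})$. Here, unlike the $j=d$ case covered by Lemma~\ref{lem:Ln-L}, no quantitative $n^{-\gamma}$ rate is available in the absence of a gap condition (the Avalanche Principle on which Lemma~\ref{lem:Ln-L} rests is inaccessible). Nevertheless, Fekete's subadditive lemma still yields the qualitative convergence $L^j_\varepsilon(\omega,M_{E'},n)\to L^j_\varepsilon(\omega,M_{E'})$ as $n\to\infty$; and since $\varepsilon\mapsto L^j_\varepsilon(\omega,M_{E'},n)$ is a uniformly bounded sequence of convex functions on $[-\delta,\delta]$, pointwise convergence automatically upgrades to uniform convergence on that interval, yielding $L^j_\varepsilon(\omega,M_{E'},n)\leq L^j_\varepsilon(\omega,M_{E'})+o(1)$.

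Finally, to replace $E'$ by $E$ on the right-hand side, I would invoke upper semi-continuity of $E\mapsto L^j_\varepsilon(\omega,M_E)$: since $L^j_\varepsilon(\omega,M_E,n)$ is continuous in $E$ for each fixed $n$ (it is the average of $\log$ of a continuous nonvanishing function of $E$) and $L^j_\varepsilon(\omega,M_E)=\inf_n L^j_\varepsilon(\omega,M_E,n)$ by subadditivity, the infimum is upper semi-continuous in $E$; the same convexity in $\varepsilon$ makes this uniform on $[-\delta,\delta]$. Hence $L^j_\varepsilon(\omega,M_{E'})\leq L^j_\varepsilon(\omega,M_E)+o(1)$ as $|E'-E|\to 0$. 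Chaining the three estimates produces the stated bound. The main technical obstacle lies not in any single step but in verifying that the subharmonic bound and the two limit-passages are all uniform in $\theta$, in $\varepsilon\in[-\delta,\delta]$, and in $E'$ in a small neighborhood of $E$; each of these uniformity statements reduces to a standard convexity-compactness argument.
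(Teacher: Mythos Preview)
The paper does not actually supply a proof of this lemma; it is stated without argument, as a standard consequence of the subharmonic upper-bound machinery of \cite{GS2}*{Section~4} combined with subadditivity, in the absence of the gap condition needed for the quantitative rate of Lemma~\ref{lem:Ln-L}. Your proposal is therefore a correct fleshing-out of what the paper leaves implicit, and the three ingredients you identify (pointwise subharmonic bound, Fekete, upper semi-continuity in~$E$) are exactly the right ones.

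One small organizational point: the uniformity in~$E'$ of your Step~2 is awkward to verify as written, since the rate at which $L^j_\varepsilon(\omega,M_{E'},n)\to L^j_\varepsilon(\omega,M_{E'})$ could a~priori depend on~$E'$, and the limiting object is only upper semi-continuous in~$E'$. The clean way around this is to bypass $L^j_\varepsilon(\omega,M_{E'})$ entirely: by subadditivity one has $L^j_\varepsilon(\omega,M_{E'},n)\le L^j_\varepsilon(\omega,M_{E'},m)+Cm/n$ for any fixed~$m$, with~$C$ uniform for~$E'$ near~$E$ and $|\varepsilon|\le\delta$; then pass from~$E'$ to~$E$ by continuity at the fixed finite scale~$m$; finally choose~$m$ so that $L^j_\varepsilon(\omega,M_E,m)\le L^j_\varepsilon(\omega,M_E)+\epsilon$ uniformly in~$\varepsilon$ (convexity). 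This is morally what you wrote, merely reordered so that every limit passage is manifestly uniform.
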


The following large deviation estimates play a crucial role in our argument.  These results were first established in Lemma~1.1 of~\cite{BG} by Bourgain and Goldstein, and further developed in~\cites{GS1,GS2}.

\begin{lemma}\label{lem:LDTsig}
For $\omega\in \mathrm{DC}$, there exists $\gamma>0$ such that for any $|\varepsilon|\leq \delta$, $\varepsilon\in \R$, and $n$ large enough, the following large deviation set 
\begin{align}\label{def:badnE}
\mathcal{B}_{n,E,\varepsilon}:=\left\{\theta\in\T:\, \frac{1}{n}\log \|\textstyle{\bigwedge^d}  M_{n,E}(\theta+i\varepsilon)\|\leq {L}_{\varepsilon}^d(\omega,M_E,n)-n^{-\gamma}\right\}
\end{align}
satisfies $\mathrm{mes}(\mathcal{B}_{n,E,\varepsilon})\leq e^{-n^\gamma}$. 
\end{lemma}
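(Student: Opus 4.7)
The plan is to follow the Bourgain--Goldstein scheme, exploiting the subharmonicity of $u_n(z):=\tfrac{1}{n}\log\|\textstyle{\bigwedge^d} M_{n,E}(z)\|$ in the strip, together with the almost-invariance of $u_n$ under the rotation by $\omega$. First I would check that $u_n$ is subharmonic on $\T_\delta$: this follows because the matrix entries of $M_{n,E}(z)$, and hence of $\bigwedge^d M_{n,E}(z)$, are analytic functions of $z$, and $\log$ of the operator norm of an analytic matrix-valued function is subharmonic. By Lemma~\ref{lem:upperbd} together with Lemma~\ref{lem:Ln-L}, we have the uniform upper bound $u_n(\theta+i\varepsilon)\leq L^d_\varepsilon(\omega,M_E,n)+n^{-\gamma_0}$ for some $\gamma_0>0$, while by definition the average satisfies $\langle u_n(\cdot+i\varepsilon)\rangle=L^d_\varepsilon(\omega,M_E,n)$. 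Consequently $u_n-\langle u_n\rangle$ is a subharmonic function on the strip with $L^\infty$ norm bounded by $O(1)$.

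The second ingredient is the almost-invariance of $u_n$ under the rotation $\theta\mapsto \theta+\omega$. From the multiplicative structure $M_{n,E}(\theta+\omega)=M_E(\theta+n\omega)M_{n,E}(\theta)M_E(\theta)^{-1}$ one deduces a pointwise bound
\[
|u_n(\theta+\omega+i\varepsilon)-u_n(\theta+i\varepsilon)|\leq \frac{C}{n}
\]
uniformly in $\theta,\varepsilon$, since $B,V$ and $B^{-1}$ are bounded on $\T_\delta$. Iterating, and using the Diophantine condition $\|k\omega\|_\T\geq a|k|^{-A}$, a Koksma--Hlawka or direct Fourier estimate gives that for any $\theta$ and any $N\leq n^{\alpha}$ (with $\alpha$ chosen appropriately relative to $A$),
\[
\Big|\frac{1}{N}\sum_{j=0}^{N-1}u_n(\theta+j\omega+i\varepsilon)-\langle u_n(\cdot+i\varepsilon)\rangle\Big|\leq n^{-\gamma_1}
\]
for some $\gamma_1>0$.

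The third step converts this $L^1$-type closeness to the mean into an exponential measure bound via the John--Nirenberg inequality applied to the subharmonic function $\langle u_n\rangle - u_n\geq -n^{-\gamma_0}$. More concretely, the Riesz representation of $u_n$ in a fixed horizontal substrip, combined with the bounded $L^\infty$ norm, yields a BMO bound on $u_n(\cdot+i\varepsilon)$ uniform in $n$ and $\varepsilon$; John--Nirenberg then gives the distributional inequality
\[
\mathrm{mes}\{\theta:\, u_n(\theta+i\varepsilon)<\langle u_n(\cdot+i\varepsilon)\rangle - t\}\leq C\exp(-c\,t)
\]
for all $t>0$. Combining this with the averaging step of the previous paragraph (which prevents such a deviation on a large Birkhoff window) upgrades the exponent, and taking $t=n^{-\gamma}$ for a suitably small $\gamma>0$ (depending on $a,A,\delta$ and the $\gamma_0,\gamma_1$ above) produces the desired bound $\mathrm{mes}(\mathcal{B}_{n,E,\varepsilon})\leq e^{-n^\gamma}$.

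The main obstacle will be the third step: extracting a quantitative exponential measure bound from the combination of subharmonicity and Birkhoff averaging, and doing so with constants uniform in $|\varepsilon|\leq \delta$. The argument will likely proceed, as in \cite{GS1}, by first establishing an $L^2$ or $L^1$ deviation via the averaging, then a Cartan-type estimate for the exceptional set of a bounded subharmonic function (which directly gives exponential decay), and finally matching the two to identify the admissible exponent $\gamma$. The dependence of $\gamma$ on the Diophantine exponent $A$ and on $\tau$ enters only through the scales $N$ allowed in the Birkhoff sum and through the radius of the substrip used in the Riesz representation; neither of these requires the strong Diophantine condition~\eqref{eq:SDC}, which is why~\eqref{def:DC} suffices here.
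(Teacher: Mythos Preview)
The paper does not give its own proof of this lemma; it simply cites the result from Bourgain--Goldstein~\cite{BG} and Goldstein--Schlag~\cites{GS1,GS2}, and your outline reproduces precisely that standard argument (subharmonicity of $u_n$, almost-invariance under $\theta\mapsto\theta+\omega$, Diophantine averaging, then a subharmonic/BMO large-deviation bound).

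One point in your third paragraph deserves tightening: a uniform $O(1)$ BMO bound on $u_n$ fed into John--Nirenberg at level $t=n^{-\gamma}$ would only yield $\mathrm{mes}(\mathcal{B}_{n,E,\varepsilon})\le Ce^{-cn^{-\gamma}}$, which is useless. The actual mechanism in~\cite{GS1} is that the averaging step produces a \emph{small} $L^2$ (and hence BMO) norm of order $n^{-\gamma_1}$ for $u_n-\langle u_n\rangle$; it is this smallness, inserted into John--Nirenberg or a Cartan-type estimate, that gives the claimed $e^{-n^\gamma}$. You state this correctly in your final paragraph, so the issue is only the phrasing of the middle one.
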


We will also use the Lipschitz continuity of $L^{d}_{\varepsilon}(\omega,M_E,n)$ with respect
to $\varepsilon$.
\begin{lemma}\label{lem:Lip_eps}\cite{GS2}*{Corollary 4.2}
There exists $C=C(B,V,|E|)>0$, such that for each $1\leq j\leq 2d$,
    \begin{align}\label{eq:Lip_eps}
        |L^j_{\varepsilon}(\omega,M_E,n)-L^j_{\varepsilon'}(\omega,M_E,n)|\leq C  |\varepsilon-\varepsilon'|,
    \end{align}
   for all sufficiently small $|\varepsilon|$, and  
    uniformly in $n$. 
\end{lemma}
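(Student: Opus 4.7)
The plan is to deduce the Lipschitz bound from the convexity of $\varepsilon\mapsto L^j_\varepsilon(\omega,M_E,n)$ combined with a uniform in $n$ two-sided bound on a slightly enlarged strip. This is the standard trick: a convex function on $[-2\delta_0,2\delta_0]$ bounded by $M$ in absolute value is automatically $(2M/\delta_0)$-Lipschitz on $[-\delta_0,\delta_0]$. So everything reduces to establishing (i) convexity and (ii) a uniform two-sided bound.

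For convexity, I would use that $B,V\in C^\omega(\T_\delta,\mathrm{Mat}(d,\C))$, so the entries of $M_{n,E}(\theta+i\varepsilon)$ are holomorphic in $z=\theta+i\varepsilon$ on the strip $|\varepsilon|<\delta$, and hence so are those of $\bigwedge^j M_{n,E}(z)$. Therefore $z\mapsto \log\|\bigwedge^j M_{n,E}(z)\|$ is subharmonic in $z$. By Fubini and the sub-mean-value property, the $\theta$-average
\[
\varepsilon\mapsto \frac{1}{n}\int_\T \log\big\|\textstyle\bigwedge^j M_{n,E}(\theta+i\varepsilon)\big\|\,\mathrm{d}\theta=L^j_\varepsilon(\omega,M_E,n)
\]
is convex in $\varepsilon$ (averages of a subharmonic function over horizontal slices are convex in the height). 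This step is routine; the only care needed is to ensure $M_{n,E}(z)$ is actually holomorphic on $|\varepsilon|<\delta$, which follows because $\det B(\theta+i\varepsilon)\neq 0$ on $\T_\delta$ so $B^{-1}$ extends analytically.

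For the uniform two-sided bound on, say, $|\varepsilon|\leq 2\delta_0$ with $2\delta_0<\delta$: the upper bound is immediate since $\|\bigwedge^j A\|\leq \|A\|^j$ gives $L^j_\varepsilon(\omega,M_E,n)\leq j\sup_{z\in \T_{2\delta_0}}\log\|M_E(z)\|=:M_+(B,V,|E|)$. For the lower bound, I would use the singular-value identity $\|\bigwedge^j A\|=\sigma_1(A)\cdots\sigma_j(A)$, so
\[
\log\|\textstyle\bigwedge^j A\|=\log|\det A|-\log\big(\sigma_{j+1}(A)\cdots\sigma_{2d}(A)\big)\geq \log|\det A|-(2d-j)\log\|A\|.
\]
Averaging, $L^j_\varepsilon(\omega,M_E,n)\geq L^{2d}_\varepsilon(\omega,M_E,n)-(2d-j)M_+$. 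Finally, $\det M_E(z)$ is, up to sign, $\det B^{(*)}(z)/\det B(z)$, which is an analytic, non-vanishing, bounded function on $\T_{2\delta_0}$, so $L^{2d}_\varepsilon(\omega,M_E,n)$ is equal to a single (not growing in $n$) average of $\log|\det M_E(\cdot+i\varepsilon)|$ and is bounded uniformly in $n$ and in $|\varepsilon|\leq 2\delta_0$.

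Combining, one has $|L^j_\varepsilon(\omega,M_E,n)|\leq M$ uniformly in $n$ for $|\varepsilon|\leq 2\delta_0$, with $M=M(B,V,|E|)$. The elementary convexity bound then yields the Lipschitz estimate \eqref{eq:Lip_eps} on $|\varepsilon|,|\varepsilon'|\leq \delta_0$ with $C=2M/\delta_0$. The step I expect to require the most care is verifying the uniform lower bound, i.e., making sure the $\det$ and $\|M_E\|$ estimates are indeed valid on the full strip $\T_{2\delta_0}$ (using invertibility of $B$), and that the singular-value inequality is applied correctly; everything else is essentially bookkeeping.
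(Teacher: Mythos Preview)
Your argument is correct and is precisely the standard route: convexity of $\varepsilon\mapsto L^j_\varepsilon(\omega,M_E,n)$ (from subharmonicity of $\log\|\bigwedge^j M_{n,E}(\cdot)\|$) together with a uniform two-sided bound on a slightly wider strip yields the Lipschitz estimate. Your verification of the lower bound via $\log\|\bigwedge^j A\|\geq \log|\det A|-(2d-j)\log\|A\|$ and the explicit form $\det M_E(z)=\pm\det B^{(*)}(z)/\det B(z)$ is fine; note that $\det B^{(*)}(\theta+i\varepsilon)=\overline{\det B(\theta-i\varepsilon)}$, so non-vanishing of $\det B$ on $\T_\delta$ indeed gives non-vanishing of $\det B^{(*)}$ there as well, and hence the required uniform bounds on $L^{2d}_\varepsilon$.

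As for comparison: the paper does not supply its own proof of this lemma; it simply quotes it from~\cite{GS2}*{Corollary~4.2}. The argument there is exactly the convexity-plus-uniform-bounds mechanism you describe (in the scalar setting), so your proposal is in line with the cited source, with the only addition being the block-valued bookkeeping for the lower bound.
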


\subsection{Green's function and Poisson formula}
\subsubsection{Schr\"odinger case}
In the Sch\"odinger case, the resolvent can be expressed in terms of Dirichlet determinants. 
In fact, let
\begin{align}\label{def:Dn}
    D_n(\theta,E):=\det(H_{\theta}|_{[0,n-1]}-E),
\end{align}
where $H_{\theta}|_{[0,n-1]}$ is the restriction of $H_{\theta}$ to the interval $[0,n-1]$ with Dirichlet boundary condition.
For $[\ell,m]\subset\Z$, let $G_{[\ell,m]}^E(\theta):=(H_{\theta}|_{[\ell,m]}-E)^{-1}$ be the Green's function.
The following Possion formula is well-known: for $\ell\leq k<j\leq m$, the $(k,j)$-th entry of the Green's function satisfies
\begin{align}\label{eq:Poisson_S}
    G_{[\ell,m]}^E(\theta;k,j)=\frac{D_{k-\ell}(\theta+\ell\omega,E)\cdot D_{m-j}(\theta+(j+1)\omega,E)}{D_{m-\ell+1}(\theta+\ell\omega,E)}.
\end{align}
The following large deviation estimate for $D_n$ was established in \cite{GS2}. 
\begin{lemma}\label{lem:un_LDT}\cite{GS2}*{Proposition 2.11}
If $L^1(\omega,E)>\tau>0$, there exists a positive constant $\gamma>0$ such that for $n$ large enough,
\begin{align*}
\mes\{\theta\in \T:\, n^{-1} \log |D_n(\theta+i\varepsilon,E)|<L^1_{\varepsilon}(\omega,E,n)-n^{-\gamma}\}<e^{-n^{\gamma}}.
\end{align*}
\end{lemma}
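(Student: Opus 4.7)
The plan is to apply the Bourgain--Goldstein subharmonic-function machinery to the function
\[
u_n(\theta+i\varepsilon) := \tfrac{1}{n}\log|D_n(\theta+i\varepsilon, E)|.
\]
Since $D_n(\cdot, E)$ is a trigonometric polynomial, $u_n$ is subharmonic in $\theta$ on a neighborhood of $\T_\delta$. Three ingredients are required: a uniform pointwise upper bound, a matching $L^1(\T)$ lower bound on the average, and a Cartan/BMO deviation estimate that turns a small sup--mean gap into an exponentially small measure bound.

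\emph{Upper bound.} In the Schr\"odinger case, $D_n(\theta, E)$ is the $(1,1)$-entry of the monodromy matrix $M_{n,E}(\theta)$, so $|D_n(\theta+i\varepsilon, E)| \leq \|M_{n,E}(\theta+i\varepsilon)\|$. Lemma~\ref{lem:upperbd} with $d=1$, combined with Lemma~\ref{lem:Ln-L} to replace $L^1_\varepsilon(\omega, E)$ by $L^1_\varepsilon(\omega, E, n)$, yields
\[
u_n(\theta+i\varepsilon) \leq L^1_\varepsilon(\omega, E, n) + n^{-\gamma}
\]
uniformly in $\theta \in \T$ and $|\varepsilon| \leq \delta$.

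\emph{Average lower bound.} One shows $\langle u_n(\cdot+i\varepsilon)\rangle \geq L^1_\varepsilon(\omega, E, n) - n^{-\gamma}$. By definition, $n L^1_\varepsilon(\omega, E, n) = \int_\T \log\|M_{n,E}(\theta+i\varepsilon)\|\, d\theta$. The entries of $M_{n,E}(\theta)$ are $D_n(\theta, E)$, $D_{n-1}(\theta+\omega, E)$, $D_{n-1}(\theta, E)$, and $D_{n-2}(\theta+\omega, E)$, so $\log\|M_{n,E}(\theta+i\varepsilon)\| \leq \log 2 + \max_{k \in \{n-2, n-1, n\}} \log|D_k(\theta + *, E)|$; translation invariance of $d\theta$ on $\T$ and an Avalanche-Principle comparison (available because $L^1(\omega, E) > \tau > 0$) align the averages $\langle \log|D_k(\cdot + i\varepsilon, E)|\rangle$ for $k \in \{n-2,n-1,n\}$ up to errors of order $n^{1-\gamma}$, delivering the required lower bound on $\langle u_n\rangle$.

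\emph{Cartan deviation and obstacle.} The Bourgain--Goldstein subharmonic-function BMO estimate (see \cite{GS1}*{Section 4}) states that for subharmonic $u$ with $\sup u - \langle u\rangle \leq \eta$, one has $\mathrm{mes}\{u < \langle u\rangle - \lambda\} \leq C\exp(-c\lambda/\eta)$. Applied with $\eta = 2n^{-\gamma}$ from the two preceding steps and $\lambda$ of comparable order, a readjustment of $\gamma$ produces both a deviation of order $n^{-\gamma}$ and a measure bound $< e^{-n^\gamma}$, which is the claimed estimate. The \textbf{main obstacle} is the average lower bound: because $D_n$ genuinely vanishes at each finite-volume Dirichlet eigenvalue, $\log|D_n|$ cannot be dominated below by $\log\|M_{n,E}\|$ pointwise. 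The Diophantine hypothesis~\eqref{def:DC} enters here, through the large deviation estimate for $\|M_{n,E}\|$ (Lemma~\ref{lem:LDTsig} with $d=1$) applied in conjunction with positivity: it confines the "bad set" where the monodromy is small to exponentially small measure, thereby restricting the zeros of $D_n(\cdot+i\varepsilon, E)$ enough that the averages of $\log|D_n|$ and $\log\|M_{n,E}\|$ agree to within $n^{1-\gamma}$.
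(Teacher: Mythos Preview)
The paper does not prove this lemma; it is quoted verbatim from \cite{GS2}*{Proposition~2.11} and treated as a black box. So there is no proof in the paper to compare against.

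Your outline is broadly the correct strategy and matches the architecture of the proof in \cite{GS2}. Two remarks on accuracy. First, the deviation inequality you state in the third step --- that $\sup u - \langle u\rangle \le \eta$ alone yields $\mes\{u<\langle u\rangle-\lambda\}\le C\exp(-c\lambda/\eta)$ --- is not a general fact about subharmonic functions on~$\T$. In the Bourgain--Goldstein framework the exponential tail comes from a BMO bound on $u_n$ followed by John--Nirenberg, and the BMO bound is obtained from the near shift-invariance $|u_n(\theta)-u_n(\theta+\omega)|\le C/n$ together with the Diophantine condition. The function $n^{-1}\log|D_n|$ is \emph{not} almost shift-invariant in this sense (shifting by $\omega$ changes the interval, not just the phase), so one cannot run the argument directly on $D_n$; instead one transfers the LDT from $\|M_{n,E}\|$ (which \emph{is} almost invariant) to $D_n$ via the Avalanche Principle and the entry-versus-norm comparison. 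You gesture at this in your ``obstacle'' paragraph, but it is the heart of the matter rather than a side issue: both the average lower bound \emph{and} the deviation bound for $D_n$ are inherited from those for the cocycle norm through this route. Second, the Avalanche Principle step is more than an alignment of averages for $k\in\{n-2,n-1,n\}$; in \cite{GS2} it is used at a mesoscopic scale to reconstruct $\log|D_n|$ from short blocks where the LDT for the norm already holds, and this is what produces the $n^{1-\gamma}$ error uniformly outside a set of exponentially small measure.
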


\subsubsection{Jacobi block case}
In the Jacobi-block case, we work with finite volume Hamiltonians under periodic boundary conditions. As in \cite{HS3}, for $\ell,m\in \Z$ and $\ell<m$, we define 
\begin{align}\label{def:H_per}
&H^p_{\theta}|_{[\ell d,md-1]}\\
=
&\left(\begin{matrix}
V(\theta+(m-1)\omega) & B^{(*)}(\theta+(m-1)\omega) & &  &B(\theta+\ell\omega)\\
B(\theta+(m-1)\omega) &V(\theta+(m-2)\omega) &\ddots \\
& \ddots &\ddots &\ddots \\
& &\ddots &\ddots &B^{(*)}(\theta+(\ell+1)\omega)\\
B^{(*)}(\theta+\ell \omega) & & &B(\theta+(\ell+1)\omega) &V(\theta+\ell\omega)
\end{matrix}\right),
\end{align}
Let 
\begin{align}\label{def:fn}
f_n(\theta,E):=\det(H^p_{\theta}|_{[0,nd-1]}-E).
\end{align}
For any $[\ell d,m d-1]\subset \Z$ with $\ell,m\in \Z$, $\ell<m$,
\begin{align}\label{def:Green}
G_{[\ell d,md-1]}^{p,E}(\theta):=(H^p_{\theta}|_{[\ell d,md-1]}-E)^{-1}
\end{align}
is the finite volume Green's function with periodic boundary conditions. 
For any $\ell d\leq k<j\leq md-1$, Cramer's rule yields the following analogue of~\eqref{eq:Poisson_S}
\begin{align}\label{eq:mufn}
G^E_{[\ell d,md-1]}(\theta;k,j)=\frac{\mu^E_{[\ell d,md-1],k,j}(\theta)}{f_{m-\ell}(\theta+\ell \omega,E)},
\end{align}
where $\mu^E_{[\ell d,md-1],k,j}(\theta)$ is the determinant of the submatrix of $(H_{\theta}^p|_{[\ell d, md-1]}-E)$ defined by deleting the $k$-th row and $j$-th column. 
The following upper bound for the numerator $\mu$ was proved in \cite{HS3}*{Lemma 2.6}.
\begin{lemma}\label{lem:numerator}
Let $\omega\in \mathrm{DC}$.
Let $3d\leq y\leq (n-1)d-1$ and $0\leq x\leq d-1$ or $(n-1)d\leq x\leq nd-1$. Set $\ell:=\lfloor y/d\rfloor$. Then 
for any $E\in \C$, $\varepsilon>0$, and uniformly in $\theta\in \T$, 
\begin{align}
|\mu^E_{[0,nd-1],x,y}(\theta)|\leq C_{d,B} \cdot e^{n(\langle\log |\det B|\rangle+\varepsilon)} \cdot \left(e^{\ell {L}^{d-1}(E)+(n-\ell){L}^d(E)}+e^{\ell {L}^d(E)+(n-\ell){L}^{d-1}(E)}\right),
\end{align}
where $L^j=L^j(\omega,M_E)$, provided $n>N(\varepsilon)$ is large enough. 
Here $C_{d,B}$ is a constant depending only on~$d$ and $\|B^{-1}\|_{\T, \infty}$. 
\end{lemma}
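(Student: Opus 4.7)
The plan is to express the cofactor $\mu^E_{[0,nd-1],x,y}(\theta)$ as a product of two pieces: the scalar normalization $\prod_{j=0}^{n-1} \det B(\theta+j\omega)$ arising from the block-Jacobi structure, and a minor of the monodromy $M_{n,E}(\theta)$ whose size is $(d-1)\times(d-1)$ (reflecting the single row/column removal). To this end I would first establish a block analogue of the Schr\"odinger identity
\[
f_n(\theta, E) = (-1)^{nd} \prod_{j=0}^{n-1} \det B(\theta + j\omega) \cdot \det\bigl( M_{n,E}(\theta) - \widetilde{J} \bigr),
\]
obtained by Schur-complementing the block-circulant matrix $H^p_\theta|_{[0,nd-1]} - E$ along its block diagonal and absorbing the periodic corner blocks into a fixed $2d\times 2d$ matrix $\widetilde{J}$. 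Reading off the $(x,y)$ cofactor on both sides shows that $\mu^E_{[0,nd-1],x,y}(\theta)$ equals $\prod_j \det B(\theta+j\omega)$ times a particular $(d-1)\times(d-1)$ minor of $M_{n,E}(\theta) - \widetilde{J}$, with the surviving rows/columns determined by the residues of $x$ and $y$ modulo $d$.

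Next I would use the cocycle factorization $M_{n,E}(\theta) = M_{n-\ell, E}(\theta+\ell\omega) \cdot M_{\ell,E}(\theta)$ with $\ell = \lfloor y/d\rfloor$ and apply the Cauchy--Binet formula to the $(d-1)$-minor. Because exactly one row and one column have been deleted, the resulting expansion decomposes into products in which the left factor contributes a minor of rank at most $d$ and the right factor contributes a complementary minor of rank at most $d$, with the two ranks summing to $2(d-1)+1 = 2d-1$ (one rank drop shared between the halves). The dominant contributions therefore pair a $\bigwedge^{d-1}$ with a $\bigwedge^d$ in the two possible orderings, yielding after application of Lemmas~\ref{lem:upperbd} and \ref{lem:upper_j_neq_d} the two exponential terms
\[
e^{\ell L^{d-1}(E) + (n-\ell) L^d(E)} \quad \text{and} \quad e^{\ell L^d(E) + (n-\ell) L^{d-1}(E)},
\]
up to a sub-exponential prefactor. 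Terms pairing two $\bigwedge^{d-1}$'s are strictly subleading and absorbed into the $e^{n\varepsilon}$ slack.

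The remaining normalization $\prod_{j=0}^{n-1} |\det B(\theta + j\omega)|$ equals $\exp\bigl(n\langle \log|\det B|\rangle + o(n)\bigr)$ by a quantitative Birkhoff averaging estimate valid for $\omega\in\mathrm{DC}$, which applies because $\log|\det B|$ is real-analytic on $\T_\delta$ (the assumption that $\det B\neq 0$ on $\T_\delta$ is essential here). The $o(n)$ can be absorbed into $n\varepsilon$ once $n > N(\varepsilon)$, while the constant $C_{d,B}$ collects the $\binom{2d}{d}$-type combinatorial counts from Cauchy--Binet and a factor $\|B^{-1}\|_{\T,\infty}^d$ that appears because the transfer matrix $M_E$ involves $B^{-1}$ rather than $B$ when one passes between $H$-minors and $M$-minors.

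The hardest step is the first one: establishing the Schur-complement identity and, more importantly, tracking which precise minors of $M_{n,E}(\theta)-\widetilde{J}$ correspond to $\mu^E_{[0,nd-1],x,y}$ when $x$ lies in the last block $[(n-1)d, nd-1]$ rather than the first. In that case the periodic corner blocks directly couple row $x$ back to block $0$ through the wrap-around, so the cofactor no longer factors cleanly at position $\ell$. The remedy is to cyclically re-index the matrix so that the deleted row again sits in block $0$ (at the cost of shifting the phase by $(n-1)\omega$ and swapping which of the two exponential terms dominates); one then has to verify that the rotated minor satisfies the same bound. This bookkeeping --- together with the verification that no Cauchy--Binet term produces the forbidden $\bigwedge^d \otimes \bigwedge^d$ product at both scales (which would overshoot the bound by a factor of $e^{n L^d}$) --- is the main technical content of the argument.
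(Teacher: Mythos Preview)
Your first step --- the claim that the cofactor $\mu^E_{[0,nd-1],x,y}(\theta)$ equals $\prod_j\det B(\theta+j\omega)$ times a $(d-1)\times(d-1)$ minor of the $2d\times 2d$ matrix $M_{n,E}(\theta)-\widetilde J$ --- is wrong, and the rest of the argument rests on it. The Schur-complement reduction that collapses $\det(H^p_\theta-E)$ to $\prod\det B\cdot\det(M_{n,E}-I_{2d})$ works by successively eliminating interior blocks using the block-tridiagonal structure; deleting a column inside block $\ell$ destroys exactly that elimination step, so the cofactor does not correspond to any minor of the $2d\times 2d$ boundary matrix. Put differently, the identity $f_n=\pm\prod\det B\cdot\det(M_n-I_{2d})$ is not linear (or even polynomial of a simple kind) in the individual entries of $H^p_\theta$, so ``reading off the $(x,y)$ cofactor on both sides'' has no meaning. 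A sanity check: a $(d-1)$-minor of $M_n-\widetilde J$ is bounded by $\|\bigwedge^{d-1}M_{n,E}\|\le e^{nL^{d-1}+o(n)}$, which is \emph{strictly smaller} than the target $e^{\ell L^{d-1}+(n-\ell)L^d}$ since $L^d>L^{d-1}$; in the scalar case $d=1$ your ``$(d-1)$-minor'' would be a $0\times0$ determinant, i.e.\ the constant~$1$, whereas the numerator there is a Dirichlet determinant of length $n-\ell$.

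Your Cauchy--Binet step is separately flawed. Cauchy--Binet expresses a $k\times k$ minor of $AB$ as a sum of products of $k\times k$ minors of $A$ with $k\times k$ minors of $B$; the ranks do not split additively. There is no mechanism that pairs a $\bigwedge^{d-1}$ on one segment with a $\bigwedge^d$ on the other by applying Cauchy--Binet to a single minor of $M_n=M_{n-\ell}M_\ell$.

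The paper does not prove this lemma itself (it is quoted from \cite{HS3}*{Lemma~2.6}), but Section~\ref{sec:numerator} carries out the closely related diagonal case, Lemma~\ref{lem:numerator_diag}, and shows the actual template. One augments $H^p_\theta-E$ to a bordered $(nd+1)\times(nd+1)$ matrix $R_{x,y}$ whose determinant equals the cofactor, then performs explicit row reductions driven by the recursions \eqref{eq:rec1}--\eqref{eq:rec2}; this sweeps interior blocks into transfer matrices and leaves $\prod_j|\det B_j|$ times the determinant of a small structured matrix built from $M_{\ell,E}$ and $M_{n-\ell,E}$ separately (not as a product), together with one-step factors. The asymmetric exponent $\ell L^{d-1}+(n-\ell)L^d$ appears because deleting row~$x$ in the boundary block removes one of the $d$ boundary directions, forcing a $(d-1)$-dimensional wedge on that side while the opposite side retains the full $\bigwedge^d$; the two terms in the lemma come from the two possible boundary locations of~$x$. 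The work is the row-reduction bookkeeping, not a Schur complement, and the final estimate uses an inequality of the type in Lemma~\ref{lem:wedge_upper} rather than Cauchy--Binet.
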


We will also need the following variant on the diagonal, which can be proved by similar arguments, see Section~\ref{sec:numerator}. 

\begin{lemma}\label{lem:numerator_diag}
    Let $\omega\in \mathrm{DC}$ and $x=y\in [2d,n-2d-1]$, 
    Then for $\gamma>0$ as in Lemma \ref{lem:upperbd}, $n$ large, and $E'\in \C$ such that $|E'-E|\leq n^{-2}e^{-n^{\gamma}}$, uniformly in $\theta\in \T$:
    \begin{align}
        |\mu^{E'}_{[0,nd-1],x,x}(\theta)|\leq  e^{n(L^d(E)+\langle \log |\det B|\rangle+5n^{-\gamma})}.
    \end{align}
\end{lemma}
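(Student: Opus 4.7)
I would follow the strategy of \cite{HS3}*{Lemma 2.6} (recalled as Lemma \ref{lem:numerator}), adapting it from the off-diagonal case $x$-near-boundary, $y$-in-middle, to the diagonal case $x=y$ in the middle. The key structural observation is that removing both the $x$-th row and $x$-th column is a \emph{symmetric} perturbation of $H^p_\theta|_{[0,nd-1]} - E'$: whereas the off-diagonal removal in Lemma \ref{lem:numerator} forces a rank-$(d-1)$ wedge product on one side of the cut (producing the $L^{d-1}$ factor that appears there), a symmetric diagonal removal admits rank-$d$ estimates on both sides, yielding a bound controlled by $L^d$ alone plus the $\det B$ normalization.

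Concretely, I would write $\mu^{E'}_{[0,nd-1],x,x}(\theta)=\det\bigl((H^p_\theta|_{[0,nd-1]}-E')_{\hat x,\hat x}\bigr)$ and view the deleted-row/column matrix in $2\times 2$ block form with diagonal blocks indexed by $[0,x-1]$ and $[x+1,nd-1]$. Because $x\in[2d,nd-2d-1]$ is bounded away from both endpoints, the periodic-BC corner blocks $B(\theta),B^*(\theta)$ are undisturbed, and the off-diagonal coupling between the two halves is low-rank, consisting only of the surviving $B,B^*$ sub/super-diagonal entries adjacent to the defective block containing $x$ together with the periodic corner entries. A Laplace / Schur-complement expansion along this low-rank coupling gives
\begin{equation*}
    \mu^{E'}_{x,x}(\theta)=\det M_L\cdot\det M_R+(\text{corrections}),
\end{equation*}
where $M_L,M_R$ are Jacobi block operators on $[0,x-1]$ and $[x+1,nd-1]$ with Dirichlet-type BC at the cut. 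Each of $\det M_L,\det M_R$ reduces, modulo $\det B$ factors, to the $d$-th exterior power of the corresponding transfer matrix on an interval of length $\ell$ or $n-\ell-1$ where $\ell=\lfloor x/d\rfloor$; applying Lemma \ref{lem:upperbd_E'} on each half (the hypothesis $|E'-E|\leq n^{-2}e^{-n^\gamma}$ is strictly stronger than what the shorter intervals need) together with the uniform control $\prod_{j=0}^{n-1}|\det B(\theta+j\omega)|\leq e^{n\langle\log|\det B|\rangle+O(n^{1-\gamma})}$ (from standard uniform upper bounds for the subharmonic function $\log|\det B|$ on $\T_\delta$) produces the main-term bound $\leq e^{nL^d(E)+n\langle\log|\det B|\rangle+O(n^{1-\gamma})}$. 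The correction terms are products of off-diagonal minors, directly bounded by Lemma \ref{lem:numerator}; their exponents $L^{d-1}+L^d\leq 2L^d$ fit within the $5n^{1-\gamma}$ error slack.

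\textbf{Main obstacle.} The delicate part is the Schur decomposition when $x$ is not at a block boundary, i.e.\ when $r:=x\bmod d\in\{1,\dots,d-2\}$: the defective middle block then straddles two nontrivial sub-pieces that must be split correctly, and one has to verify that the surviving $B,B^*$ couplings across the cut contribute only $O(1)$ to the exponent rather than an $(L^d-L^{d-1})$ deficit. This verification, which uses precisely the symmetric-removal structure noted at the outset, is what distinguishes the diagonal bound here from Lemma \ref{lem:numerator} and is the technical heart of the proof.
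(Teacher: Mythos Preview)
Your Schur/Laplace approach differs substantially from the paper's. The paper performs explicit row reduction on the augmented $(nd+1)\times(nd+1)$ matrix $R_{x,x}$ encoding $\mu^{E'}_{x,x}$, collapsing it to $\prod_j|\det B_j|$ times a small $(2d+1)\times(2d+1)$ determinant built from $I_{2d}-M_{n}(0)$, $M_{n-\ell-1}(\ell+1)$, $M_\ell(0)$, and the single basis vector ${\bf e}_{d,r}$ (where $x=\ell d+r$). Factoring out the two half-interval transfer matrices (unit determinant on~$\T$), rewriting their inverses via the symplectic identity $M^{-1}=\Omega M^*\Omega$, and applying a new wedge inequality $\|\bigwedge^{\ell}(W+AB)\|\leq C_d(\|W\|+1)^{\ell}\max_j\|\bigwedge^j A\|\,\|\bigwedge^j B\|$ reduces everything to $\max_{1\le j\le 2d-1}\|\bigwedge^j M_{n-\ell-1,E'}\|\,\|\bigwedge^j M_{\ell,E'}\|$, which Lemmas~\ref{lem:upperbd_E'} and~\ref{lem:upper_j_neq_d} show is attained at $j=d$ with value $\exp(nL^d(E)+O(n^{1-\gamma}))$.

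Your plan has two concrete gaps. First, for $d\ge2$ and every $x$, at least one of $[0,x-1]$, $[x+1,nd-1]$ contains a truncated $d$-block, so $\det M_L,\det M_R$ do \emph{not} reduce to $d$-th wedges of transfer matrices; you flag this as the ``main obstacle'' but offer no resolution. Second, the appeal to Lemma~\ref{lem:numerator} for the correction terms is not justified: that lemma bounds minors $\mu^E_{[0,nd-1],x,y}$ of the \emph{full periodic} matrix with one index near the boundary, whereas your corrections are products of minors of the \emph{Dirichlet sub-matrices} $M_L,M_R$ at the coupling indices (both at the cut \emph{and} at the periodic corners), for which no such bound has been established---you would have to prove separate numerator estimates for those from scratch. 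The paper's row-reduction route sidesteps both issues: it never decouples into two Dirichlet problems, and the partial-block parameter $r$ survives only as an ${\bf e}_{d,r}$ in the final small determinant, handled uniformly by the wedge lemma.
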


Note the above estimates apply to $\mu_{[\ell d, md-1]}$ as well since
\begin{align}
    \mu^E_{[\ell d,md-1],k,j}(\theta)=\mu^E_{[0,(m-\ell)d-1],k-\ell d, j-\ell d}(\theta+\ell \omega).
\end{align}
The following connection between $f_n(\theta,E)$ and the transfer matrix $M_{n,E}(\theta)$ was proved in \cite{HS3}*{Lemma 2.7}.

\begin{lemma}\label{lem:detP}
For any $\theta\in \T_{\delta}$ and $E\in \C$,
\begin{align}
|f_n(\theta,E)|=  |\det (M_{n,E}(\theta)-I_{2d})| \cdot \prod_{j=0}^{n-1}|\det B(\theta+j\omega)| .
\end{align}
\end{lemma}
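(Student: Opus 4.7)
The plan is to introduce a single auxiliary $2nd\times 2nd$ block-cyclic matrix $\mathcal{T}$ and to compute its determinant in two different ways: once from the transfer matrix side to recover $|\det(M_{n,E}(\theta)-I_{2d})|$, and once from the Hamiltonian side to recover $|f_n(\theta,E)|/\prod_{j=0}^{n-1}|\det B(\theta+j\omega)|$. Equating the two expressions and taking absolute values then yields the lemma. Concretely, let $\mathcal{T}$ be the block-cyclic $2nd\times 2nd$ matrix with the transfer-matrix block $M_k:=M_E(\theta+k\omega)$ on the diagonal block $(k,k)$ for $k=0,\ldots,n-1$, and $-I_{2d}$ on the cyclic super-diagonal block $(k,k+1\bmod n)$, all other blocks being zero. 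This is just the matrix encoding the ``periodic transfer equation'' $\Psi_{k+1}=M_k\Psi_k$ with wrap $\Psi_n=\Psi_0$.

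For the transfer-matrix evaluation, I would perform the determinant-preserving block row operation that replaces block row $n-1$ by itself plus the telescoping combination $M_{n-1}\cdot(\text{row }n-2)+M_{n-1}M_{n-2}\cdot(\text{row }n-3)+\cdots+M_{n-1}\cdots M_1\cdot(\text{row }0)$. This kills every entry of that row except in column $0$, where it becomes $M_{n-1}M_{n-2}\cdots M_0-I_{2d}=M_{n,E}(\theta)-I_{2d}$. A cyclic permutation of the $n$ block rows (which contributes an even sign, since each block has even size $2d$) then produces a block lower-triangular matrix whose diagonal blocks are $M_{n,E}(\theta)-I_{2d}$ followed by $n-1$ copies of $-I_{2d}$. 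Hence $|\det\mathcal{T}|=|\det(M_{n,E}(\theta)-I_{2d})|$.

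For the Hamiltonian evaluation, I would reorder the basis of $\mathbb{C}^{2nd}$ so that all ``top'' components $u_k$ (the first $d$ coordinates of $\Psi_k$) precede all ``bottom'' components $v_k$, writing $\mathcal{T}=\begin{pmatrix}\mathcal{U}_{uu} & \mathcal{U}_{uv} \\ \mathcal{U}_{vu} & \mathcal{U}_{vv}\end{pmatrix}$. Here $\mathcal{U}_{vv}=-S$ is minus the block-cyclic shift and is invertible with $|\det\mathcal{U}_{vv}|=1$, while the remaining three blocks are read directly off the formula for $M_E$. The Schur complement formula gives $\det\mathcal{T}=\pm\det\mathcal{U}_{vv}\cdot\det\mathcal{S}'$, where a short computation identifies $\mathcal{S}':=\mathcal{U}_{uu}-\mathcal{U}_{uv}\mathcal{U}_{vv}^{-1}\mathcal{U}_{vu}$ as the block tridiagonal cyclic matrix with entries $(E-V_k)B_k^{-1}$, $-I_d$, and $-B_k^{(*)}B_{k-1}^{-1}$ on its diagonal, cyclic super-diagonal, and cyclic sub-diagonal respectively. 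Right-multiplying $\mathcal{S}'$ by $\mathcal{B}:=\diag(B_0,\ldots,B_{n-1})$ clears the $B^{-1}$ factors and produces exactly $-(H^p_{\theta}|_{[0,nd-1]}-E)$; in particular the cyclic super-diagonal entry $-I_d$ at position $(n-1,0)$ is multiplied by $B_0=B(\theta)$, matching the wrap-around corner of \eqref{def:H_per}. Therefore $|\det\mathcal{T}|=|\det\mathcal{S}'|=|f_n(\theta,E)|/\prod_{j=0}^{n-1}|\det B(\theta+j\omega)|$.

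Equating the two expressions for $|\det\mathcal{T}|$ delivers the lemma. The key point to verify carefully, and the only real subtlety in the argument, is the bookkeeping at the cyclic corner: the raw transfer matrix product iterated past site $n-1$ naively seems to involve $B_n=B(\theta+n\omega)$, whereas the periodic Hamiltonian \eqref{def:H_per} uses $B_0=B(\theta)$ at its wrap-around. The Schur complement calculation reconciles these automatically because the block-cyclic shift in $\mathcal{U}_{vv}$ wraps the index $n$ back to $0$ before being contracted against the block-diagonal factor $\mathcal{B}$; this is precisely why the product on the right-hand side of the lemma involves $B_0,\ldots,B_{n-1}$ and never $B_n$.
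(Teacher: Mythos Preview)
Your argument is correct. The block-cyclic matrix $\mathcal{T}$ indeed has $|\det\mathcal{T}|=|\det(M_{n,E}(\theta)-I_{2d})|$ via the telescoping row reduction, and the Schur complement with respect to $\mathcal{U}_{vv}=-S$ gives the periodic Jacobi block matrix after right-multiplication by $\mathcal{B}$; I checked the corner entries and they match the wrap-around blocks $B_0$, $B_0^{(*)}$ in \eqref{def:H_per} exactly as you claim.

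Regarding comparison with the paper: this paper does not actually supply a proof of Lemma~\ref{lem:detP}; it simply quotes the result from \cite{HS3}*{Lemma 2.7}. So there is no in-paper argument to compare against. Your approach --- a single auxiliary determinant evaluated two ways --- is a clean, self-contained route that avoids any inductive bookkeeping, and its only delicate point (the cyclic boundary matching) you have addressed correctly.
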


In \cite{HS3}*{Lemma 2.8} we established the following large deviation theorem for $f_n$.
\begin{lemma}\label{lem:deno}
Let $\omega\in \mathrm{DC}$, and $\gamma>0$ be as in Lemma \ref{lem:LDTsig}.
Assume $L_d(\omega,M_E)\geq \nu>0$.
There exist $\gamma>0$, $N_0>1$ large and $0<\kappa_0\ll 1$ so that the {\it $\kappa_0$-admissible} sequence 
\begin{align}\label{def:admissible}
\mathcal{N}:=\{n\geq N_0: \|n\omega\|_{\T}\leq \kappa_0\}
\end{align}
has the following property: 
for any $|\varepsilon|\leq \delta/2$, and all large $\kappa_0$-admissible $n$, the following large deviation set 
\begin{align}\label{def:B_fEn}
\mathcal{B}_{f,E,n,\varepsilon}:=
\big\{\theta\in \T: n^{-1}\log |f_{n}(\theta+i\varepsilon,E)|<\langle \log |\det B(\cdot+i\varepsilon)|\rangle +{L}^d_{\varepsilon}(\omega,M_E)- n^{-\gamma}\big\}
\end{align}
satisfies $\mathrm{mes}(\mathcal{B}_{f,E,n,\varepsilon})<e^{-n^{\gamma}}$.
\end{lemma}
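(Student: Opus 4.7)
The plan is to exploit the factorization in Lemma \ref{lem:detP},
\[
\log|f_n(\theta+i\varepsilon,E)| = \log\bigl|\det(M_{n,E}(\theta+i\varepsilon)-I_{2d})\bigr| + \sum_{j=0}^{n-1}\log|\det B(\theta+j\omega+i\varepsilon)|,
\]
and prove an LDT for each summand separately. Since $\det B$ is analytic and nonvanishing on $\T_\delta$, the function $\log|\det B(\cdot+i\varepsilon)|$ is bounded and analytic on $\T_{\delta/2}$, so the scalar Bourgain--Goldstein LDT (as in \cite{GS1}) applied to its Birkhoff average gives the lower bound $\langle \log|\det B(\cdot+i\varepsilon)|\rangle - n^{-\gamma}$ off a set of measure $\leq e^{-n^\gamma}$. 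The entire difficulty therefore concentrates on showing
\[
\tfrac{1}{n}\log\bigl|\det(M_{n,E}(\theta+i\varepsilon)-I_{2d})\bigr| \geq L^d_\varepsilon(\omega,M_E) - n^{-\gamma}
\]
off an exponentially small set in $\theta$.

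To handle the determinant I would expand via elementary symmetric functions,
\[
\det(M-I_{2d}) = \sum_{k=0}^{2d}(-1)^{2d-k}\mathrm{tr}\bigl(\textstyle\bigwedge^k M\bigr),
\]
and use that, under the positivity gap $L_d\geq \nu>0$ together with the symplectic identity $L_{2d+1-j}=-L_j$ at $\varepsilon=0$, the quantity $L^k_\varepsilon(\omega,M_E)$ is maximized at $k=d$ and strictly smaller by at least some $c(\nu)>0$ for every $k\neq d$ (first at $\varepsilon=0$, then for small $|\varepsilon|$ via Lemma \ref{lem:Lip_eps}, possibly after shrinking $\delta$). Combined with the uniform upper bound in Lemma \ref{lem:upper_j_neq_d}, the $k\neq d$ terms contribute at most $e^{nL^d_\varepsilon - cn}$ and can be absorbed into the error, so it suffices to prove an LDT for the analytic function
\[
g(\theta) := \mathrm{tr}\bigl(\textstyle\bigwedge^d M_{n,E}(\theta+i\varepsilon)\bigr).
\]

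For this I would follow the Cartan/Riesz subharmonic scheme: Lemma \ref{lem:upperbd} delivers the uniform upper bound $\log|g(\theta)|\leq nL^d_\varepsilon + n^{1-\gamma}$ on $\T_{\delta/2}$, so the deviation estimate reduces to a matching lower bound on the average $\langle \log|g|\rangle$. That lower bound is extracted from the singular value LDT for $\|\bigwedge^d M_{n,E}\|$ in Lemma \ref{lem:LDTsig}, and this extraction is the main obstacle: a priori $\mathrm{tr}(\bigwedge^d M)$ can be much smaller than $\|\bigwedge^d M\|$ if the top $d$ eigenvectors of $M_{n,E}$ rotate against the invariant unstable splitting, producing cancellations. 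The $\kappa_0$-admissibility condition $\|n\omega\|_\T\leq \kappa_0$ is precisely the device that prevents this: it forces $M_{n,E}(\theta+n\omega+i\varepsilon)$ to be a small perturbation of $M_{n,E}(\theta+i\varepsilon)$, and iterating the cocycle pins the top $d$-plane to the equivariant unstable bundle, so the top $d$ eigenvalues align with the top $d$ singular values and $g(\theta)$ becomes comparable to $\|\textstyle\bigwedge^d M_{n,E}(\theta+i\varepsilon)\|$ up to subexponential losses. The resulting subharmonic LDT on $g$, together with Step 1, completes the proof.
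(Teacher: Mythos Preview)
The paper does not prove this lemma; it is quoted directly from \cite{HS3}*{Lemma~2.8} and used as a black box throughout. Your structural outline is nonetheless the right one and matches how that reference proceeds: factor $f_n$ via Lemma~\ref{lem:detP}, handle the $\det B$ Birkhoff sum (in fact $\log|\det B(\cdot+i\varepsilon)|$ is bounded real-analytic on~$\T$ since $\det B\ne0$ on $\T_\delta$, so for $\omega\in\mathrm{DC}$ its ergodic average converges at a polynomial rate with \emph{no} exceptional set---even easier than you indicate), expand $\det(M_n-I_{2d})$ in traces of exterior powers, and isolate the $k=d$ term using the gap $L_d\geq\nu$ together with symplecticity at $\varepsilon=0$ and Lemma~\ref{lem:Lip_eps} for small~$|\varepsilon|$.

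Where your sketch has a genuine gap is the final step, and it is precisely the place where all the work sits. Your claim that admissibility ``forces $M_{n,E}(\theta+n\omega+i\varepsilon)$ to be a small perturbation of $M_{n,E}(\theta+i\varepsilon)$'' is false as written: the $\theta$-derivative of the $n$-step product grows like $e^{Cn}$, so $\|n\omega\|_\T\leq\kappa_0$ with fixed $\kappa_0$ gives at best $\|M_n(\theta+n\omega)-M_n(\theta)\|\lesssim n\kappa_0 e^{nL_1}$, which is enormous. What admissibility actually controls is the \emph{one-step} matrix $M_E(\theta+n\omega)-M_E(\theta)=O(\kappa_0)$, which makes $M_n(\theta+\omega)$ conjugate to $M_n(\theta)$ up to an error that is small \emph{relative to the hyperbolicity}; one then combines the LDT for $\|\bigwedge^d M_{kn}\|$ at a few multiples $kn$ with an Avalanche-Principle-type almost-multiplicativity argument to pin the spectral radius of $\bigwedge^d M_n(\theta)$ near $e^{nL^d_\varepsilon}$ off a small set. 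This, plus the $e^{cn}$ separation between the top eigenvalue of $\bigwedge^d M_n$ and the rest (coming from $L_d>L_{d+1}$), is what makes the trace comparable to the norm. Note also that for $\varepsilon\ne0$ the cocycle is no longer symplectic, so there is no ``equivariant unstable bundle'' to invoke; the argument must stay at the level of singular values and spectral radii. You have correctly located the crux, but the mechanism you describe would not go through as stated.
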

\begin{remark}\label{rem:admissible}
    For every large integer $n>0$ there exists an admissible $\tilde n>0$ with $|n-\tilde n|\le C_*$ for some constant $C_*$.
\end{remark}

\subsection{Global zero counts}
\subsubsection{Schr\"odinger case}
The global zero count of $D_n$ was established in \cite{HS1}.
Let 
\begin{align}
    N_n(E,\varepsilon):=\#\{z\in \mathcal{A}_{\varepsilon}: D_n(z,E)=0\}.
\end{align}
\begin{theorem}\label{thm:acc=zeros}
Assume that for some $E\in \R$, $L^1(\omega,E)\geq \nu>0$.
Then for some $\gamma\in (0,1)$ and for $n$ large enough,
\begin{align}
\left| \frac{1}{2n} N_n (E,\delta/2)-\kappa^1(\omega,E)\right|\leq \delta^{-1} n^{-\gamma}.
\end{align}
\end{theorem}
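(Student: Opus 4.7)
\emph{Plan.} The idea is to relate $N_n(E,\delta/2)$ to the convexity in $\varepsilon$ of the averaged subharmonic function
\[
U_n(\varepsilon):=\tfrac{1}{n}\int_{\T}\log|D_n(\theta+i\varepsilon,E)|\,d\theta,
\]
and then replace $U_n$ by $L^1_\varepsilon(\omega,E)$ with error $O(n^{-\gamma})$. Since $\log|D_n|$ is subharmonic with Riesz measure $\sum_k\delta_{z_k}$ at the zeros $z_k=\theta_k+i\varepsilon_k$ of $D_n(\cdot,E)$, and $\theta$-periodicity annihilates the $\partial_\theta^2$ contribution, one obtains the distributional identity $nU_n''(\varepsilon)=2\pi\rho_n(\varepsilon)$, where $\rho_n$ is the pushforward to the $\varepsilon$-axis of the zero set. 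Integrating twice yields the Jensen-type identity
\[
W(\varepsilon_0):=n\bigl[U_n(\varepsilon_0)+U_n(-\varepsilon_0)-2U_n(0)\bigr]=2\pi\int_0^{\varepsilon_0}N_n(E,s)\,ds.
\]

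First, using the pointwise upper bound from Lemma~\ref{lem:upperbd} together with Lemma~\ref{lem:un_LDT} (which restricts the $\theta$-set where $\tfrac{1}{n}\log|D_n(\theta+i\varepsilon,E)|<L^1_\varepsilon(\omega,E)-n^{-\gamma}$ to measure $\le e^{-n^\gamma}$, whose contribution to the integral one controls via the deterministic upper bound), I would establish
\[
|U_n(\varepsilon)-L^1_\varepsilon(\omega,E)|\le C n^{-\gamma}\qquad \text{uniformly in }|\varepsilon|\le\delta.
\]
Next, reality of $V$ forces $M_E(\theta-i\varepsilon)=\overline{M_E(\theta+i\varepsilon)}$ on $\T$, so $L^1_\varepsilon(\omega,E)$ is even in $\varepsilon$; combined with convexity and upper semi-continuity of the acceleration at $\varepsilon=0$, the right acceleration $\kappa^1_\varepsilon$ is locally constant near $0$. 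Shrinking $\delta$ (equivalently $I_{E_0}$) if necessary, one may assume
\[
L^1_\varepsilon(\omega,E)=L^1_0(\omega,E)+2\pi|\varepsilon|\kappa^1(\omega,E)\qquad(|\varepsilon|\le\delta),
\]
so that $W(\varepsilon_0)=4\pi n\,\varepsilon_0\,\kappa^1(\omega,E)+O(n^{1-\gamma})$ uniformly for $\varepsilon_0\in[0,\delta]$.

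Finally, since $N_n(E,\cdot)$ is non-decreasing, the Jensen identity yields, for $0<\varepsilon_1<\varepsilon_2$,
\[
2\pi(\varepsilon_2-\varepsilon_1)\,N_n(E,\varepsilon_1)\le W(\varepsilon_2)-W(\varepsilon_1)\le 2\pi(\varepsilon_2-\varepsilon_1)\,N_n(E,\varepsilon_2).
\]
Taking $(\varepsilon_1,\varepsilon_2)=(\delta/4,\delta/2)$ produces $N_n(E,\delta/2)\ge 2n\kappa^1(\omega,E)-C\delta^{-1}n^{1-\gamma}$, and $(\varepsilon_1,\varepsilon_2)=(\delta/2,\delta)$ produces the matching upper bound $N_n(E,\delta/2)\le 2n\kappa^1(\omega,E)+C\delta^{-1}n^{1-\gamma}$. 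Dividing by $2n$ gives the stated estimate.

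\emph{Main obstacle.} The delicate point is the uniform approximation $U_n\approx L^1_\varepsilon$ in the first step: the LDT bounds only the measure of the sub-level set, so the lower bound on the $\theta$-average requires using the deterministic sup-bound of Lemma~\ref{lem:upperbd} to absorb the exponentially small but potentially unbounded-below bad set, and the $n^{-\gamma}$ error budgets coming from the two ingredients must be compatible. The local affine structure of $L^1_\varepsilon$ invoked in Step~2 is comparatively soft: it rests only on convexity, evenness for real $V$, and upper semi-continuity of the acceleration, without invoking Avila's quantization.
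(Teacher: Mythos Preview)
The paper does not prove this theorem; it is quoted from \cite{HS1} and used as a black box. Your Jensen--formula strategy (relate the second difference of $U_n(\varepsilon)=\tfrac1n\langle\log|D_n(\cdot+i\varepsilon,E)|\rangle$ to the integrated zero count, replace $U_n$ by $L^1_\varepsilon$, and read off the slope) is exactly the mechanism behind the result in \cite{HS1}, so your outline is on the right track and matches the intended proof.

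There is, however, one genuine gap. Your Step~2 asserts that convexity, evenness, and upper semi-continuity of $\kappa^1_\varepsilon$ alone force $\varepsilon\mapsto L^1_\varepsilon$ to be \emph{affine} on some $[0,\delta]$, ``without invoking Avila's quantization.'' This is false: take $L(\varepsilon)=|\varepsilon|+\varepsilon^2$, which is even, convex, has right derivative $\kappa_\varepsilon=1+2\varepsilon$ at $\varepsilon\ge0$, and $\kappa_\varepsilon$ is continuous (hence u.s.c.), yet not locally constant at~$0$. In your final squeezing step the upper bound on $N_n(E,\delta/2)$ uses the average slope of $L^1_\varepsilon$ on $[\delta/2,\delta]$, and convexity only gives that this slope is $\ge\kappa^1(\omega,E)$, not $=\kappa^1(\omega,E)$; without the affine structure you do not get the matching upper bound. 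The fix is simply to invoke quantization $\kappa^1_\varepsilon\in\Z$ (Avila \cite{Global}), which together with right-continuity of the right derivative of a convex function yields the needed local constancy and hence the exact affine formula on a possibly smaller $[0,\delta]$.

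A smaller point: your handling of Step~4 is slightly optimistic. Lemma~\ref{lem:un_LDT} controls only a single sub-level set $\{v<L-n^{-\gamma}\}$, and Lemma~\ref{lem:upperbd} is an \emph{upper} bound, so it cannot by itself ``absorb'' the contribution of the bad set to the lower bound on $\langle v\rangle$. What one actually uses is either the full scale of large-deviation estimates $\mathrm{mes}\{v<L-h\}\le e^{-c h n^\sigma}$ for $h\ge n^{-\gamma}$ (available in \cites{BG,GS1}) and a layer-cake integration, or equivalently the Riesz representation of $\log|D_n|$ together with the $O(n)$ zero bound to control $\int_{\mathrm{bad}}|v|$. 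Either route yields $|U_n(\varepsilon)-L^1_\varepsilon|\le Cn^{-\gamma}$ uniformly in $|\varepsilon|\le\delta$, but the argument needs one more ingredient than you listed.
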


\subsubsection{Jacobi block case}
The global zero count of $f_n$ was established in \cite{HS3}.
Let
\begin{align}
    N_n^p(E,\varepsilon):=\#\{z\in \mathcal{A}_{\varepsilon}: f_n(z,E)=0\}.
\end{align}

The following result is the analogue of Theorem~\ref{thm:acc=zeros}, but its proof is substantially more involved due to the Grassmannian algebra entering the analysis.

\begin{theorem}\label{thm:acc=zeros*}\cite{HS3}*{Theorem 5.8}
Assume that for some $E\in \R$, $L_d(\omega,E)\geq \nu>0$.
Then for some $\gamma\in (0,1)$ and for $n$ large enough and $\kappa_0$-admissible,
\begin{align}
\left| \frac{1}{2n} N^p_n (E,\delta/2)-\kappa^d(\omega,E)\right|\leq \delta^{-1} n^{-\gamma}.
\end{align}
\end{theorem}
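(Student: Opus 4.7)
The plan is to count zeros of $f_n$ in the annulus $\mathcal{A}_{\delta/2}$ via Jensen's formula (Riesz representation for the subharmonic function $\log|f_n|$), approximate the resulting $\theta$-averages using the large deviation theorem for $f_n$, and identify the limit with $\kappa^d(\omega, E)$ via the local structure of $\varepsilon \mapsto L^d_\varepsilon$.

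Let $\Phi_n(\varepsilon) := \int_\T \log|f_n(\theta + i\varepsilon, E)|\, d\theta$. Subharmonicity of $\log|f_n|$ on $\T_\delta$ implies $\Phi_n$ is convex in $\varepsilon$, and its distributional second derivative equals $2\pi$ times the push-forward of the zero-counting measure of $f_n$ under $z \mapsto \mathrm{Im}\,z$. In particular, for any $0 < h < \delta/4$, convexity yields
\[
\frac{\Phi_n(\delta/2) - \Phi_n(\delta/2 - h) + \Phi_n(-\delta/2) - \Phi_n(-\delta/2 + h)}{h} \le 2\pi N^p_n(E, \delta/2) \le \frac{\Phi_n(\delta/2 + h) - \Phi_n(\delta/2) + \Phi_n(-\delta/2 - h) - \Phi_n(-\delta/2)}{h}.
\]

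Next, Lemma~\ref{lem:deno} combined with the uniform upper bound Lemma~\ref{lem:upperbd} (translated from $\textstyle{\bigwedge^d} M_{n,E}$ to $f_n$ via Lemma~\ref{lem:detP}) gives
\[
\left|\frac{\Phi_n(\varepsilon)}{n} - L^d_\varepsilon(\omega, M_E) - \langle \log|\det B(\cdot + i\varepsilon)|\rangle\right| \le n^{-\gamma}
\]
uniformly in $|\varepsilon| \le \delta$ for all $\kappa_0$-admissible large $n$. Because $\det B$ is analytic and non-vanishing on $\T_\delta$, the function $\langle \log|\det B(\cdot + i\varepsilon)|\rangle$ is affine in $\varepsilon$, so its contribution cancels in each of the second differences above. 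The zero count is therefore reduced to analyzing
\[
L^d_{\delta/2+h}(\omega, M_E) - L^d_{\delta/2}(\omega, M_E) + L^d_{-\delta/2-h}(\omega, M_E) - L^d_{-\delta/2}(\omega, M_E),
\]
together with its interior analog.

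The remaining step is to show that this expression equals $4\pi\kappa^d(\omega, E) \cdot h$ up to $O(h\, n^{-\gamma})$. The symmetry $\overline{f_n(\theta, E)} = f_n(\bar\theta, E)$, stemming from self-adjointness of $H_\theta$ for real $\theta$, $E$, makes $\Phi_n$ even in $\varepsilon$ and pins the left and right slopes of $L^d_\varepsilon$ at $\varepsilon = 0$ against each other (modulo the affine winding contribution of $\det B$). Combined with the Avila-Jitomirskaya-Sadel piecewise-affine structure of $\varepsilon \mapsto L^d_\varepsilon(\omega, M_E)$ near $0$, this forces the slopes on both sides of $0$ to have magnitude $2\pi \kappa^d(\omega, E)$ on a possibly shrunken neighborhood of~$0$. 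Choosing $h$ comparable to $\delta$ then produces the claimed $\delta^{-1} n^{-\gamma}$ error. The hard part is this final step: establishing the local affine structure of $L^d_\varepsilon$ in the Jacobi block setting, which requires the Grassmannian/exterior-algebra extensions of Avila's global theory (as developed in \cite{AJS} and \cite{HS3}) rather than the scalar $\mathrm{SL}(2, \R)$ framework available in the Schr\"odinger case.
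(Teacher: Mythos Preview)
The paper does not prove this theorem; it is quoted from \cite{HS3}*{Theorem 5.8}, with the remark that ``its proof is substantially more involved due to the Grassmannian algebra entering the analysis.'' So there is no in-paper argument to compare against, and your outline must be judged on its own.

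Your Jensen/Riesz strategy is the right backbone: the convex function $\Phi_n(\varepsilon)=\int_\T\log|f_n(\theta+i\varepsilon,E)|\,d\theta$ has distributional second derivative equal to $2\pi$ times the pushforward of the zero measure, and the difference-quotient sandwich you write down is correct. The identification of $\Phi_n/n$ with $L^d_\varepsilon+\langle\log|\det B(\cdot+i\varepsilon)|\rangle$ via the LDT and the uniform upper bound is also the right mechanism. You correctly flag that the block-matrix upgrades of these ingredients are where the work lies.

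Two points are glossed over. First, Lemma~\ref{lem:detP} gives $|f_n|=|\det(M_{n,E}-I_{2d})|\prod|\det B|$, and the passage from Lemma~\ref{lem:upperbd} (which controls only $\|\textstyle{\bigwedge^d}M_{n,E}\|$) to a sharp upper bound on $|\det(M_{n,E}-I_{2d})|$ requires expanding this determinant into principal minors and bounding \emph{all} $\|\textstyle{\bigwedge^j}M_{n,E}\|$, $0\le j\le 2d$, using $L^j_\varepsilon<L^d_\varepsilon$ for $j\ne d$. This is exactly the ``Grassmannian algebra'' the paper alludes to; it is not a direct translation.

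Second, and more seriously, your final step does not close. Evenness of $\Phi_n$ (hence of $\Psi:=L^d_\varepsilon+\langle\log|\det B(\cdot+i\varepsilon)|\rangle$) together with the piecewise-affine structure of $L^d_\varepsilon$ from \cite{AJS} does \emph{not} force the slopes of $L^d_\varepsilon$ on both sides of $0$ to have magnitude $2\pi\kappa^d$. Writing $\langle\log|\det B(\cdot+i\varepsilon)|\rangle=\beta+\alpha\varepsilon$ with $\alpha=-2\pi w$ ($w$ the winding number of $\det B$), evenness of $\Psi$ yields $(L^d_\varepsilon)'(0^-)=-(L^d_\varepsilon)'(0^+)-2\alpha$, and your difference quotient then converges to $\kappa^d - w$, not $\kappa^d$. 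You acknowledge the issue parenthetically (``modulo the affine winding contribution of $\det B$'') but do not resolve it; some additional input beyond symmetry and convexity is needed to pin down the left slope of $L^d_\varepsilon$, and this is part of what \cite{HS3} supplies.
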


\section{Analysis of local zeros for the Schr\"odinger case}
\label{sec:sch1}

Heuristically speaking, the following Lemma \ref{lem:local_zero}  plays the role of the crucial Theorem~1.4 of \cite{GS2}. That result asserts a local zero count for the Dirichlet determinant on an interval 
$[1,n]$ with perturbed edges $[s^-, n+s^+]$ with $|s^{\pm}|\simeq \exp((\log n)^{\delta})$, $\delta>0$ small, while the lemma below is for intervals with fixed length $n$ but with shifted centers: $[k+1,k+n]$, $|k|<(1-o(1))n/2$.
Recall that in the Schr\"odinger case, $D_n(z,E)$ is the Dirichlet determinant, see \eqref{def:Dn}. { The parameter $\epsilon$ in the lemma is different from the one appearing in the complexification of the phase. But this should not cause any confusion.}

Recall that $E_0\in \R$ is a fixed energy for which $L(\omega,E_0)>0$.
Throughout Sections~\ref{sec:sch1} and \ref{sec:sch2}, we write for simplicity that $\kappa=\kappa^1(\omega,E_0)$. The energy $E$ will be assumed to be in a neighborhood $I_{E_0}$ of $E_0$, see \eqref{def:IE0}.

\begin{lemma}\label{lem:local_zero}
For each ball $B(z_0,r)$, with $z_0\in \kreis^1$ and $r\simeq r_n:=e^{-(\log n)^{C_0}}$ with $C_0>1$, there exists $|k|<(1-\epsilon)n/2$ such that $D_n(e^{2\pi ik\omega} z,E)$ has no more than $2\kappa$ zeros in $B(z_0,r)$. 
\end{lemma}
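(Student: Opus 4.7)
The plan is a pigeonhole argument based on the global zero count of Theorem~\ref{thm:acc=zeros}. Since $z\mapsto e^{2\pi ik\omega}z$ is a rotation, the zeros of $z\mapsto D_n(e^{2\pi ik\omega}z,E)$ inside $B(z_0,r)$ are in bijection with zeros of $D_n(\cdot,E)$ inside the translated ball $B_k:=B(e^{2\pi ik\omega}z_0,r)$. Thus it suffices to exhibit some $|k|<(1-\epsilon)n/2$ for which $B_k$ contains at most $2\kappa$ zeros of $D_n(\cdot,E)$. I would first verify that the family $\{B_k\}_{|k|<(1-\epsilon)n/2}$ is pairwise disjoint and lies inside $\mathcal{A}_{\delta/2}$: containment is automatic once $r\simeq r_n\to 0$, and disjointness follows from $|e^{2\pi ik\omega}z_0-e^{2\pi ik'\omega}z_0|\gtrsim \|(k-k')\omega\|_{\T}\geq a|k-k'|^{-A}\gtrsim n^{-A}$, which comfortably dominates $2r_n=2e^{-(\log n)^{C_0}}$ for large~$n$.

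With disjointness in hand, I would invoke Theorem~\ref{thm:acc=zeros} to get $N_n(E,\delta/2)\leq 2n\kappa+2\delta^{-1}n^{1-\gamma}$; here $\kappa=\kappa^1(\omega,E_0)\in\Z_{\geq 0}$ is an integer by Avila's quantization, and the upper-semicontinuity built into~\eqref{def:IE0} makes the bound valid uniformly over $E\in I_{E_0}$. Summing the local zero counts over the disjoint $B_k\subset \mathcal{A}_{\delta/2}$, the average number of zeros per ball is at most
\[
\frac{2n\kappa+O(n^{1-\gamma})}{(1-\epsilon)n+O(1)}=\frac{2\kappa}{1-\epsilon}+o(1).
\]
Fixing $\epsilon$ once and for all with $0<\epsilon<1/(2\kappa+1)$ makes this quantity strictly less than the integer $2\kappa+1$ for large $n$. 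Pigeonholing on the nonnegative integer zero counts then yields some $B_k$ containing no more than $2\kappa$ zeros, and that $k$ does the job.

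The main subtlety I expect to face is that the pigeonhole margin $2\kappa+1-2\kappa/(1-\epsilon)$ is quite thin, so the quantitative $n^{1-\gamma}$ error in Theorem~\ref{thm:acc=zeros} is essential; a global count with merely $o(n)$ precision would not suffice. Beyond this, the argument is robust and rests only on the Diophantine separation of the shifts $e^{2\pi ik\omega}z_0$ and Avila's integer quantization of the acceleration.
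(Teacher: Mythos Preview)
Your proof is correct and follows essentially the same approach as the paper: both arguments combine the Diophantine disjointness of the rotated balls with the global zero count of Theorem~\ref{thm:acc=zeros} and pigeonhole. The only cosmetic difference is that the paper phrases it as a contradiction (assuming every shift has $\ge 2\kappa+1$ zeros) whereas you argue directly via the average; your explicit constraint $\epsilon<1/(2\kappa+1)$ is exactly the condition implicit in the paper's inequality $(2\kappa+1)(1-\epsilon)n>2\kappa(1+o(1))n$.
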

\begin{proof}
    Assume otherwise that $D_n(e^{2\pi ik\omega}z,E)$ has at least $2\kappa+1$ zeros in $B(z_0,r)$ for each $|k|<(1-\epsilon)n/2$.
    This implies for each such $k$ that $D_n(z,E)$ has at least $2\kappa+1$ zeros in $B(e^{-2\pi ik\omega}z_0,r)$.
    These balls are disjoint from each other due to the Diophantine condition on $\omega$: 
    \[\|(k_1-k_2)\omega\|\gtrsim \frac{1}{n^A}\gg e^{-(\log n)^{C_0}}.\]
    This implies $D_n(z,E)$ has at least $(2\kappa+1)(1-\epsilon)n>2\kappa (1+o(1)) n$ zeros in the annulus, which is a contradiction to Theorem \ref{thm:acc=zeros} and  $\kappa^1(\omega,E)\leq \kappa$, see \eqref{def:IE0}.
\end{proof}

\begin{lemma}\label{lem:un_lower_zeros}
For each $z_0\in \kreis^1$, let $k_0=k_0(z_0)$, $|k_0|<(1-\epsilon)n/2$ be as in Lemma \ref{lem:local_zero}.  
Then there exists an integer $C_0\in [1,2\kappa+1]$ such that $D_n(e^{2\pi i k_0\omega}z,E)$ has no zero in $B(z_0,4(C_0+1)r_n)\setminus B(z_0,4C_0r_n)$, and has at most $2\kappa$ zeros, denoted by $\{z_{n,1},...,z_{n,j_0}\}$, in $B(z_0,4C_0r_n)$. Furthermore, for $z\in B(z_0,(4C_0+3)r_n)$, we have the following lower bound:
\begin{align}\label{eq:un_lower_zeros}
    \log |D_n(e^{2\pi ik_0\omega}z,E)|\geq n(L(E,n)-n^{-\gamma/2})+\sum_{j=1}^{j_0}\log|z-z_{n,j}|.
\end{align}
\end{lemma}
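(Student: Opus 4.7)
The plan is to extract the zero-free annulus via pigeonhole from Lemma~\ref{lem:local_zero}, factor out the local zeros to obtain a harmonic function $v$, and prove the desired pointwise lower bound on $v$ by combining Jensen's formula at $z_0$ with the large deviation estimate, then propagate to the full ball by Harnack's inequality.

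First, apply Lemma~\ref{lem:local_zero} to the ball $B(z_0, 4(2\kappa+2) r_n)$ (whose radius is still $\simeq r_n$) to obtain $k_0$ with $|k_0| < (1-\epsilon)n/2$ for which $D_n(e^{2\pi i k_0 \omega} z, E)$ has at most $2\kappa$ zeros in this ball. The $2\kappa+1$ disjoint annuli $\{4C r_n \le |z - z_0| < 4(C+1) r_n\}$, $C=1,\dots,2\kappa+1$, lie inside it, so by pigeonhole some $C_0 \in [1, 2\kappa+1]$ yields a zero-free annulus. Enumerate the $j_0 \le 2\kappa$ zeros in $B(z_0, 4C_0 r_n)$ as $\{z_{n,j}\}_{j=1}^{j_0}$. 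On $B(z_0, 4(C_0+1)r_n)$ one can write $D_n(e^{2\pi i k_0 \omega} z, E) = R(z)\prod_j (z - z_{n,j})$ with $R$ non-vanishing and holomorphic there; then $v(z) := \log|R(z)| = \log|D_n(e^{2\pi ik_0\omega}z, E)| - \sum_j \log|z - z_{n,j}|$ is harmonic on $B(z_0, 4(C_0+1) r_n)$, and \eqref{eq:un_lower_zeros} is equivalent to $v(z) \ge n(L(E, n) - n^{-\gamma/2})$ on $B(z_0, (4C_0+3)r_n)$.

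To bound $v(z_0)$, apply Jensen's formula on $B(z_0, \rho_*)$ for $\rho_* \in (4C_0 r_n, 4(C_0+1) r_n)$: setting $u := \log|D_n(e^{2\pi i k_0 \omega} \cdot, E)|$, one gets
\[
v(z_0) \;=\; \frac{1}{2\pi} \int_0^{2\pi} u(z_0 + \rho_* e^{i\phi}) \, d\phi \;-\; j_0 \log \rho_* \;\ge\; \frac{1}{2\pi}\int_0^{2\pi} u(z_0+\rho_* e^{i\phi})\,d\phi,
\]
since $\log \rho_* \le 0$. Averaging over $\rho_*$ in the zero-free annulus converts the circle average to the 2D area average over an annular region $A$, yielding $v(z_0) \ge \frac{1}{|A|} \int_A u \, dA$. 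In $(\theta, \varepsilon)$-coordinates, $A$ corresponds to a region $\tilde A$ of area $\asymp r_n^2$; at each height $\varepsilon$, Lemma~\ref{lem:un_LDT} combined with Lemma~\ref{lem:Lip_eps} gives $u(\theta, \varepsilon) \ge n(L(E, n) - n^{-\gamma})$ except on a bad set of $\theta$-measure $< e^{-n^\gamma}$. Since $r_n \gg e^{-n^\gamma}$, the 2D bad set in $\tilde A$ has measure $\lesssim r_n e^{-n^\gamma} \ll |\tilde A|$, so the ``Good'' portion contributes $\ge (1-o(1))(nL - n^{1-\gamma})$ to the average; the ``Bad'' portion is controlled using the local factorization together with the integrability $\int_I |\log|\theta - a|| d\theta \lesssim |I|(1 + |\log|I||)$ and the upper bound Lemma~\ref{lem:upperbd}, giving a bad contribution of magnitude $O(n e^{-n^\gamma}/r_n)$ that is super-polynomially small. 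Hence $v(z_0) \ge n(L(E, n) - n^{-\gamma/2})$.

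Finally, $M - v \ge 0$ is harmonic on $B(z_0, 4(C_0+1) r_n)$ with $M := nL + O(n^{1-\gamma})$ (from Lemma~\ref{lem:upperbd} and the max principle), and Harnack's inequality with radius ratio $(4C_0+3)/(4C_0+4)$ transfers the lower bound at $z_0$ to all $z\in B(z_0, (4C_0+3) r_n)$; the Harnack constant $O(\kappa)$ is absorbed by strengthening the exponent slightly in the previous paragraph. The main obstacle is the third step: although LDT provides only a one-sided pointwise lower bound and $u$ may be very negative at zeros of $D_n$, the local factorization isolates the $j_0\le 2\kappa$ nearby zeros (which lie inside $B(z_0, 4C_0 r_n)$, away from the annular integration region $A$), and the integrability of $\log|D_n|$ handles the tiny residual bad contribution. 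This is precisely where the position-varying scheme introduced via Lemma~\ref{lem:local_zero} pays off, as it circumvents the need for a sharp LDT and hence for the stronger Diophantine condition~\eqref{eq:SDC}.
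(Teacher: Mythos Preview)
Your overall architecture (pigeonhole for the zero-free annulus, factor out the local zeros, Harnack on $M-v$) matches the paper. The difference is in how you obtain the initial lower bound on $v$ before Harnack.

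You bound $v(z_0)$ by passing from Jensen on a circle to a 2D area average over the zero-free annulus $A$, then split into Good/Bad using the LDT at each height. This works in principle, but the ``Bad'' step is more delicate than your sketch suggests: on $A\cap\text{Bad}$ the function $u=\log|D_n|$ is not pointwise bounded below (zeros can sit just outside the big ball, where your $2\kappa$ count no longer applies), so you need a quantitative $L^1$ estimate such as a Riesz decomposition with an $O(n)$ bound on the total number of zeros in a slightly larger disk and the integrability of $\log|\cdot|$. This can be carried out, but it is genuinely more work than you indicate.

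The paper sidesteps this entirely. Instead of averaging, it simply observes that the LDT bad set has measure $e^{-n^\gamma}\ll r_n=\mathrm{diam}(B(z_0,r_n)\cap\kreis^1)$, so there exists a single point $z_1\in B(z_0,r_n)\cap\kreis^1$ with $u(z_1)\ge n(L(E,n)-n^{-\gamma})$. Since $|z_1-z_{n,j}|<1$ for each $j$, one has $-\sum_j\log|z_1-z_{n,j}|\ge 0$, so $v(z_1)\ge u(z_1)\ge n(L(E,n)-n^{-\gamma})$ immediately. Harnack then gives $\sup_{B(z_0,(4C_0+3)r_n)}(M-v)\le C(M-v(z_1))\le Cn^{-\gamma}\cdot n$, which is the desired bound. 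No area average, no bad-set integral, no appeal to integrability of $\log$.

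So: your route is valid but buys nothing; the paper's single-good-point argument is strictly simpler and is the intended one.
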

\begin{proof}
    The existence of $C_0$ is evident from Lemma \ref{lem:local_zero}. 
    Let 
    \begin{align}
        Y_n(z):=\frac{D_n(e^{2\pi ik_0\omega}z,E)}{\prod_{j=1}^{j_0} (z-z_{n,j})}.
    \end{align}
    Since $Y_n(z)$ has no zeros in $B(z_0, 4(C_0+1)r_n)$,  $y_n(z):=n^{-1}\log |Y_n(z)|$ is a harmonic function in $B(z_0,4(C_0+1)r_n)$.
    We first derive the upper bound for $y_n(z)$ on $\partial B(z_0,4(C_0+1)r_n)$. 
    If $z\in \partial B(z_0,4(C_0+1)r_n)$, then for each $1\leq j\leq j_0$, $|z-z_{n,j}|\geq 4r_n$, which implies that
    \begin{align}
        \prod_{j=1}^{j_0} |z-z_{n,j}|\geq (4r_n)^{j_0}\geq (4r_n)^{2\kappa}\geq e^{-3\kappa(\log n)^{C_0}}.
    \end{align}
    Moreover,  by Lemma \ref{lem:upperbd_E'} the following upper bound holds
    \[n^{-1}\log|D_n(e^{2\pi ik_0\omega}z,E)|\leq \sup_{|\varepsilon|\leq 8C_0r_n} L_\varepsilon(E,n)+2n^{-\gamma}\leq L(E,n)+3n^{-\gamma}\]
    Therefore, for any $z\in \partial B(z_0,4(C_0+1)r_n)$, and hence for all $z\in B(z_0,4(C_0+1)r_n)$ by the maximum principle, that
    \begin{align}
        y_n(z)\leq L(E,n)+3n^{-\gamma}+3\kappa \frac{(\log n)^{C_0}}{n}<L(E,n)+4n^{-\gamma}.
    \end{align}
    By the preceding  $\tilde{y}_n(z):=L(E,n)+4n^{-\gamma}-y_n(z)$ is a non-negative harmonic function on $B(z_0, 4(C_0+1)r_n)$. By Harnack's inequality,
    \begin{align}
        \sup_{z\in B(z_0,(4C_0+3)r_n)} \tilde{y}_n(z)\leq C \inf_{z\in B(z_0,(4C_0+3)r_n)}\tilde{y}_n(z).
    \end{align}
    The large deviation estimate in Lemma \ref{lem:un_LDT} applied to $D_n(e^{2\pi i \theta},E)$ yields
    \begin{align}
        \mathrm{mes}\{\theta: n^{-1}\log |D_n(e^{2\pi i\theta},E)|<L(E,n)-n^{-\gamma}\}\leq e^{-n^{\gamma}}\ll r_n\simeq \diam(B(z_0,r_n)\cap \kreis^1).
    \end{align}
    Hence, there exists $z_1\in B(z_0,r_n)$ such that
    \begin{align}
        y_n(z_1)>L(E,n)-n^{-\gamma},
    \end{align}
    and thus
    \begin{align}
        \inf_{z\in B(z_0,(4C_0+3)r_n)}\tilde{y}_n(z)\leq \tilde{y}_n(z_1)\leq 5n^{-\gamma}.
    \end{align}
    This implies that for all $z\in B(z_0,(4C_0+3)r_n)$,
    \begin{align}
        y_n(z)\geq L(E,n)-Cn^{-\gamma}>L(E,n)-n^{-\gamma/2}, 
    \end{align}
    as desired. \end{proof}
    
Next, we pass to complexified energies. 
Let $\gamma_1$ be a constant close to $1$ such that
\begin{align}\label{def:gamma1}
    1>\gamma_1>\max((1+\gamma)/2, 1-\gamma/2)>\gamma.
\end{align}

\begin{lemma}\label{lem:un_lower_zeros_eta}
    Let $z_0,k_0,C_0$ be as in Lemma \ref{lem:un_lower_zeros}. Let $\eta\in \R$ be such that $|\eta|\leq Ce^{-n^{\gamma_1}}$ for some constant $C>0$, with $\gamma_1$ as in \eqref{def:gamma1}.
    Then $D_n(e^{2\pi ik_0\omega}z,E+i\eta)$ has the same  number of zeros as $D_n(e^{2\pi ik_0\omega}z,E)$ in $B(z_0,(4C_0+2\pm 1)r_n)$,  the zeros being denoted by $\{z_{n,1}^{\eta},...,z_{n,j_0}^{\eta}\}$. Here the count $j_0$ is the same for both balls~$B(z_0,(4C_0+2\pm 1)r_n)$. Furthermore, for any $z\in B(z_0,(4C_0+2)r_n)$, the following lower bound holds:
    \begin{align}\label{eq:un_lower_zeros_eta}
    \log |D_n(e^{2\pi ik_0\omega}z,E+i\eta)|\geq n(L(E,n)-n^{-\gamma/2})+\sum_{j=1}^{j_0}\log|z-z_{n,j}^{\eta}|.
\end{align}
\end{lemma}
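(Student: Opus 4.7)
The statement has two components: a Rouch\'e-type preservation of the local zero count, and a Harnack argument mirroring Lemma~\ref{lem:un_lower_zeros} to produce the lower bound. Both pieces rely on a single derivative estimate. Since $E\mapsto D_n(z,E)$ is a polynomial of degree $n$, Cauchy's formula on the disk $\{|E'-E|=\tfrac14 n^{-2}e^{-n^\gamma}\}$ combined with Lemma~\ref{lem:upperbd_E'} yields
\begin{equation}\label{eq:plandiff}
|D_n(e^{2\pi ik_0\omega}z,E+i\eta)-D_n(e^{2\pi ik_0\omega}z,E)|\leq C|\eta|\,n^2\,e^{n^\gamma+n(L(E,n)+2n^{-\gamma})},
\end{equation}
valid because $|\eta|\leq Ce^{-n^{\gamma_1}}\ll n^{-2}e^{-n^\gamma}$.

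For the zero count I apply Rouch\'e on each of the two circles $\partial B(z_0,(4C_0+2\pm 1)r_n)$. By Lemma~\ref{lem:un_lower_zeros} all $j_0$ zeros $z_{n,j}$ of $D_n(e^{2\pi ik_0\omega}\cdot,E)$ lie in $B(z_0,4C_0 r_n)$, so $|z-z_{n,j}|\geq r_n$ on both circles, and \eqref{eq:un_lower_zeros} gives the lower bound
\[
|D_n(e^{2\pi ik_0\omega}z,E)|\geq e^{n(L(E,n)-n^{-\gamma/2})}\,r_n^{j_0}.
\]
Since $r_n^{j_0}=e^{-O((\log n)^{C_0})}$ is vastly larger than $e^{-n^{1-\gamma/2}}$, the condition $\gamma_1>1-\gamma/2$ makes the right-hand side of~\eqref{eq:plandiff} smaller than half of this lower bound. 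Rouch\'e then produces exactly $j_0$ zeros $\{z_{n,j}^\eta\}_{j=1}^{j_0}$ of $D_n(e^{2\pi ik_0\omega}\cdot,E+i\eta)$ inside each circle, so all of them in fact lie in the inner ball $B(z_0,(4C_0+1)r_n)$ and none in the annulus.

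For the lower bound I replay the argument of Lemma~\ref{lem:un_lower_zeros} with $E+i\eta$ in place of $E$. The quotient
\[
Y_n^\eta(z):=\frac{D_n(e^{2\pi ik_0\omega}z,E+i\eta)}{\prod_{j=1}^{j_0}(z-z_{n,j}^\eta)}
\]
is analytic and non-vanishing on $B(z_0,(4C_0+3)r_n)$, so $y_n^\eta:=n^{-1}\log|Y_n^\eta|$ is harmonic. On $\partial B(z_0,(4C_0+3)r_n)$ we have $|z-z_{n,j}^\eta|\geq 2r_n$, and combined with Lemma~\ref{lem:upperbd_E'} this gives $y_n^\eta\leq L(E,n)+3n^{-\gamma}$, which extends to the interior by the maximum principle. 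Thus $\tilde y_n^\eta:=L(E,n)+4n^{-\gamma}-y_n^\eta$ is a nonnegative harmonic function. Using Lemma~\ref{lem:un_LDT} for $D_n(\cdot,E)$ together with the observation that removing a neighborhood of total arc length $\ll r_n$ around $\{z_{n,j}^\eta\}$ still leaves a set of positive arc measure in $\kreis^1\cap B(z_0,r_n)$, I locate $z_1\in\kreis^1\cap B(z_0,r_n)$ with $|D_n(e^{2\pi ik_0\omega}z_1,E)|\geq e^{n(L(E,n)-n^{-\gamma})}$ and $|z_1-z_{n,j}^\eta|\geq r_n/(100 j_0)$ for every $j$. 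Estimate~\eqref{eq:plandiff} together with $\gamma_1>(1+\gamma)/2$ transfers this bound to $E+i\eta$ losing only a factor of $\tfrac12$, hence $y_n^\eta(z_1)\geq L(E,n)-2n^{-\gamma}$ and $\tilde y_n^\eta(z_1)\leq 6n^{-\gamma}$. Harnack's inequality on the pair $B(z_0,(4C_0+2)r_n)\subset B(z_0,(4C_0+3)r_n)$ then gives $\tilde y_n^\eta\leq Cn^{-\gamma}$ throughout $B(z_0,(4C_0+2)r_n)$, whence $y_n^\eta\geq L(E,n)-n^{-\gamma/2}$, which is~\eqref{eq:un_lower_zeros_eta}.

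The main difficulty is the balancing of the two radii $(4C_0+2\pm 1)r_n$ together with the two distinct thresholds that $|\eta|$ must beat: Rouch\'e requires \eqref{eq:plandiff} to fall below $e^{n(L-n^{-\gamma/2})}r_n^{j_0}$, while the LDT-based search for a large-modulus point only supplies the weaker threshold $e^{n(L-n^{-\gamma})}$. The double condition $\gamma_1>\max((1+\gamma)/2,1-\gamma/2)$ in~\eqref{def:gamma1} is precisely calibrated so that a single upper bound $|\eta|\leq Ce^{-n^{\gamma_1}}$ dominates both errors uniformly in the regime $0<\gamma<1$.
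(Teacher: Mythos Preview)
Your proof is correct and follows the same two-step template as the paper: a difference estimate feeding into Rouch\'e's theorem on the circles $\partial B(z_0,(4C_0+2\pm1)r_n)$, followed by the Harnack argument of Lemma~\ref{lem:un_lower_zeros} rerun at energy $E+i\eta$. The only substantive difference is how you obtain the perturbation bound~\eqref{eq:plandiff}: the paper telescopes the transfer matrices directly, writing $M_{n,E+i\eta}-M_{n,E}$ as a sum of $n$ rank-one-type corrections and invoking Lemma~\ref{lem:upperbd_E'} on each factor, which yields $|D_n(\cdot,E+i\eta)-D_n(\cdot,E)|\leq C\eta\,e^{nL(E)+4n^{1-\gamma}}$ without the extra $e^{n^\gamma}$ loss; you instead apply Cauchy's estimate on a disk of radius $\sim n^{-2}e^{-n^\gamma}$ in the $E$-variable, which is slightly less sharp but entirely adequate since $\gamma_1>\gamma$ is automatic from~\eqref{def:gamma1}. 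Your more detailed treatment of the Harnack step (in particular, explicitly avoiding small neighborhoods of the $z_{n,j}^\eta$ when selecting $z_1$) fills in what the paper leaves as ``follows the same lines.''
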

\begin{proof}
The  zero count will be obtained via Rouch\'e's theorem.
By a standard telescoping argument, and with $z=e^{2\pi i(\theta+i\varepsilon)}$,
\begin{align}
    &|D_n(e^{2\pi ik_0\omega}z,E+i\eta)-D_n(e^{2\pi ik_0\omega}z,E)|\\
    \leq &\|M_{n,E+i\eta}(\theta+k_0\omega+i\varepsilon)-M_{n,E}(\theta+k_0\omega+i\varepsilon)\|\\
    \leq &\sum_{\ell=1}^n \|M_{n-\ell,E+i\eta}(\theta+(k_0+\ell+1)\omega+i\varepsilon)\|\cdot \|M_{E+i\eta}(\theta+(k_0+\ell)\omega+i\varepsilon)-M_E(\theta+(k_0+\ell)\omega+i\varepsilon)\|\\
    &\qquad \cdot \|M_{\ell-1,E}(\theta+k_0\omega+i\varepsilon)\|,
\end{align}
where by convention $M_{0,E}\equiv I_2$ is the identity matrix.
By the  upper bounds in Lemma \ref{lem:upperbd_E'}, note that $|\varepsilon|\lesssim r_n$, we have
\begin{align}
    \sup_{\theta\in \T} \sup_{|\eta'|\leq \eta} \|M_{\ell,E+i\eta'}(\theta+i\varepsilon)\|
    \leq &C\exp(\ell(L(E)+\ell^{-\gamma})).
\end{align}
One also has
\begin{align}
    \|M_{E+i\eta}(\theta+(k_0+\ell)\omega+i\varepsilon)-M_E(\theta+(k_0+\ell)\omega+i\varepsilon)\|\leq \eta.
\end{align}
Hence
\begin{align}
   &|D_n(e^{2\pi ik_0\omega}z,E+i\eta)-D_n(e^{2\pi ik_0\omega}z,E)|\\
    &\qquad \leq C\eta \cdot \exp(nL(E)+4n^{1-\gamma}) \\
    &\qquad \leq C \exp(nL(E)+4n^{1-\gamma}-n^{\gamma_1})\\
    &\qquad\leq \exp(n(L(E)-n^{\gamma_1-1}/2)),
\end{align}
where we used $\gamma_1>1-\gamma$.
We also have by Lemma \ref{lem:un_lower_zeros} that for $z\in \partial B(z_0,(4C_0+2\pm 1)r_n)$ 
\begin{align}
    |D_n(e^{2\pi ik_0\omega}z,E)|
    \geq &\exp(n(L(E,n)-n^{-\gamma/2}))\cdot r_n^{2\kappa}\\
    \geq &\exp(n(L(E,n)-2n^{-\gamma/2}))\\
    \geq &\exp(n(L(E)-2n^{-\gamma/2})),
\end{align}
where we used $|z-z_{j,n}|\geq r_n$.
Therefore, since $\gamma_1>1-\gamma/2$, we have
\begin{align}
    |D_n(e^{2\pi ik_0\omega}z,E+i\eta)-D_n(e^{2\pi ik_0\omega}z,E)|<\frac{1}{2}|D_n(e^{2\pi ik_0\omega}z,E)|,
\end{align}
for any $z\in \partial B(z_0,(4C_0+2\pm 1)r_n)$. Hence by Rouch\'e's theorem, 
$D_n(e^{2\pi ik_0\omega}z,E+i\eta)$ has the same number of zeros as $D_n(e^{2\pi ik_0\omega}z,E)$ in the disk $B(z_0,(4C_0+2\pm 1)r_n)$.
The proof of \eqref{eq:un_lower_zeros_eta} follows the same lines as that of \eqref{eq:un_lower_zeros} in Lemma \ref{lem:un_lower_zeros}.
\end{proof}

\section{H\"older continuity of the Schr\"odinger case}
\label{sec:sch2}

Recall that $E\in I_{E_0}$.
Our goal is to analyze the following for an arbitrary fixed $\theta$ as $N\to\infty$:
\begin{align}\label{eq:goal}
    &d_{\omega,N}(\theta,E-\eta,E+\eta)=\frac{1}{N}\mathrm{tr}(P_{[E-\eta, E+\eta)}(H_{\omega,\theta}|_{[0,N-1]})),
\end{align}
in which $H_{\omega,\theta}|_{[0,N-1]}$ is $H_{\omega,\theta}$ restricted to the interval $[0,N-1]$ with Dirichlet boundary condition and $P_{[E-\eta,E+\eta)}$ stands for the spectral projection.
Bounding \eqref{eq:goal} through the trace of the resolvent yields
\begin{align}\label{eq:dN<G}
    d_{\omega,N}(\theta,E-\eta,E+\eta)
    &\leq \frac{2\eta}{N} \mathrm{Im}(\mathrm{tr}(G_{[0,N-1]}^{E+i\eta}(\theta)))\\
    &=\frac{2\eta}{N}\sum_{k=0}^{N-1} \mathrm{Im} (G_{[0,N-1]}^{E+i\eta}(\theta;k,k)),
\end{align}
in which $G^{E+i\eta}_{[0,N-1]}(\theta)=(H_{\omega,\theta}|_{[0,N-1]}-(E+i\eta))^{-1}$ is the Green's function, i.e., the kernel of the resolvent.
Let an auxiliary parameter $\tau$ be defined as
\begin{align}\label{eq:choose_tau}
    \tau=\eta^{\frac{1}{2\kappa}}.
\end{align}
For any $n\in \N$, let 
\begin{align}
    \mathcal{B}_{n,h}^{\eta}:=\{\theta\in \T: u_n(e^{2\pi i\theta},E+i\eta)<L(E,n)-h\cdot  n^{-\gamma}\}.
\end{align}
be the large deviation set for $u_n(z,E+i\eta)=n^{-1}\log |D_n(z,E+i\eta)|$. 
We choose $m\in \N$ such that
$e^{-(2m)^{\gamma_1}}\simeq \eta$.
The set $\mathcal{B}_{2m,2}^{\eta}$ satisfies a large deviation estimate
\begin{align}\label{eq:mes_B2m_2}
    \mes\;\mathcal{B}_{2m,2}^{\eta}\leq e^{-(2m)^{\gamma}}.
\end{align}
In fact, the large deviation estimate of Lemma \ref{lem:un_LDT} applies directly to $\mathcal{B}_{2m,1}^0$, implying that
\begin{align}\label{eq:mes_B2m_1}
    \mathrm{mes}\; \mathcal{B}_{2m,1}^0\leq e^{-(2m)^{\gamma}}.
\end{align}
If $\theta\in (\mathcal{B}_{2m,1}^0)^c$, $|D_{2m}(e^{2\pi i\theta},E)|>\exp(2m(L(E,2m)-(2m)^{-\gamma}))$. A telescoping argument,  as in the proof of Lemma \ref{lem:un_lower_zeros_eta}, using that $\gamma_1>1-\gamma/2$, yields
\begin{align}
    |D_{2m}(e^{2\pi i\theta},E+i\eta)|>\frac{1}{2}|D_{2m}(e^{2\pi i\theta},E)|>\frac{1}{2}\exp(2m(L(E,2m)-(2m)^{-\gamma})).
\end{align}
This implies $(\mathcal{B}_{2m,1}^0)^c\subset (\mathcal{B}_{2m,2}^{\eta})^c$,
and hence \eqref{eq:mes_B2m_2}, when combining with \eqref{eq:mes_B2m_1}.

Let $\{\xi_j \,e^{2\pi i\theta_j}\}_{j=1}^{J_0}$, $\xi_j>0$ and $\theta_j\in \T$, be the zeros of $D_{2m}(z,E+i\eta)$. 
Combining the large deviation estimates in \eqref{eq:mes_B2m_2}, the uniform upper bound in Lemma \ref{lem:upperbd_E'} with
Cartan's estimate implies that 
\begin{align}\label{eq:B2m_theta_pm_rm}
\mathcal{B}_{2m, m^{\gamma/2}}^{\eta}\subset \bigcup_{j=1}^{J_0}(\theta_j-\tilde{r}_m, \theta_j+\tilde{r}_m),
\end{align}
with $\tilde{r}_m:=e^{-cm^{\gamma/2}}$ for some constant $c>0$, and $J_0\lesssim m$.

Recall $\eta\simeq e^{-(2m)^{\gamma_1}}$, then applying Lemma \ref{lem:un_lower_zeros_eta} to $n=2m$ and $z_0=e^{2\pi i\theta_j}$, for each $1\leq j\leq J_0$, yields some $|k_j|<(1-\epsilon)m$, $C_j\in [1,2\kappa+1]$, and $\ell_j\in [1,2\kappa]$, such that $w_j(z):=D_{2m}(z e^{2\pi ik_j\omega},E+i\eta)$
has $\ell_j$ zeros in the ball $B(e^{2\pi i\theta_j}, (4C_j+1)r_m)$, $r_m\simeq \exp(-(\log m)^{C_0})\gg \tilde{r}_m$.
Denoting the zeros of $D_{2m}(z e^{2\pi ik_j\omega},E+i\eta)$ in the ball $B(e^{2\pi i\theta_j},(4C_j+1)r_m)$ by $\{\xi_{j,\ell}\, e^{2\pi i\theta_{j,\ell}}\}_{\ell=1}^{\ell_j}$ with $\xi_{j,\ell}>0$, $\theta_{j,\ell}\in \T$, we have
\begin{align}\label{eq:wj_lower_zeros}
    \log |w_j(z)|\geq 2m(L(E,2m)-(2m)^{-\gamma/2})+\sum_{\ell=1}^{\ell_j}\log |z-\xi_{j,\ell}\,e^{2\pi i\theta_{j,\ell}}|,
\end{align}
for all $z\in B(e^{2\pi i\theta_j},(4C_j+2)r_m)$.
For each $1\leq j\leq J_0$ and $1\leq \ell\leq \ell_j$, we denote\begin{equation}
    \label{eq:Ijdef}
\begin{aligned}
    \tilde{I}_j:=&(\theta_j-(4C_j+2)r_m, \theta_j+(4C_j+2)r_m), \text{ and } \\
    I_{j,\ell}:=&(\theta_{j,\ell}-\tau,\theta_{j,\ell}+\tau).
\end{aligned}
\end{equation}
Recall $\tau$ is as in \eqref{eq:choose_tau} and $\tau\ll r_m$, hence for each $j,\ell$ we have $I_{j,\ell}\subset \tilde{I}_j$. 
Let $\tilde{\mathcal{B}}_{2m}^{\eta}:=\bigcup_{j=1}^{J_0} \tilde{I}_j$. 
Since $\tilde{r}_m\ll r_m$, by \eqref{eq:B2m_theta_pm_rm}, we clearly have $\mathcal{B}_{2m,m^{\gamma/2}}^{\eta}\subset \tilde{\mathcal{B}}_{2m}^{\eta}$.
We divide the analysis of $G_{[0,N-1]}^{E+i\eta}(\theta;k,k)$ into three cases.

\smallskip

{\bf Case 1.\ }  $\theta+k\omega-m\omega\in (\tilde{\mathcal{B}}_{2m}^{\eta})^c$.

\medskip

This is the easier, non-resonant case. Since $\theta+k\omega-m\omega\in (\tilde{\mathcal{B}}_{2m}^{\eta})^c\subset (\mathcal{B}^{\eta}_{2m,m^{\gamma/2}})^c$, 
we have
\begin{align}\label{eq:D2m_lower_non-resonant}
    |D_{2m}(\theta+k\omega-m\omega,E+i\eta)|
    \geq &\exp(2m(L(E,2m)-m^{\gamma/2}\cdot (2m)^{-\gamma}))\\
    \geq &\exp(2m(L(E,2m)-m^{-\gamma/2})).
\end{align}
Let $[a_k,b_k]:=[k-m,k+m-1]$ and define
\begin{align}
    \tilde{H}_{\theta}|_{[0,N-1],k}:=H_{\theta}|_{[a_k,b_k]} \oplus H_{\theta}|_{[0,N-1]\setminus [a_k,b_k]},
\end{align}
and $\tilde{G}^{E+i\ta}(\theta)=(\tilde{H}_{\theta}-(E+i\eta))^{-1}$ be the Green's function.
By the resolvent identity, we have
\begin{align}\label{eq:G=tG_case1}
    &G_{[0,N-1]}^{E+i\eta}(\theta;k,k)
    =\tilde{G}_{[0,N-1],k}^{E+i\eta}(\theta;k,k)\\
    &\qquad\qquad\qquad\qquad-G^{E+i\eta}_{[0,N-1]}(\theta;k,a_k-1)\cdot \tilde{G}^{E+i\eta}_{[a_k,b_k]}(\theta;a_k,k)\\
    &\qquad\qquad\qquad\qquad-G^{E+i\eta}_{[0,N-1]}(\theta;k,b_k+1)\cdot \tilde{G}^{E+i\eta}_{[a_k,b_k]}(\theta;b_k,k).
\end{align}
We bound two product terms above as follows:
\begin{align}\label{eq:G=tG_1_case1}
    \max(|G^{E+i\eta}_{[0,N-1]}(\theta;k,a_k-1)|,\, |G^{E+i\eta}_{[0,N-1]}(\theta;k,b_k+1)|)\leq \eta^{-1}.
\end{align}
For the other two product terms, by Cramer's rule, and using \eqref{eq:D2m_lower_non-resonant} for denominator lower bound and Lemma \ref{lem:upperbd_E'} for numerator upper bound, we have
\begin{align}\label{eq:G=tG_2_case1}
    |\tilde{G}^{E+i\eta}_{[a_k,b_k]}(\theta;a_k,k,E+i\eta)|&=\frac{|D_{b_k-k}(\theta+(k+1)\omega,E+i\eta)|}{|D_{2m}(\theta+a_k\omega,E+i\eta)|}\\
    &\qquad \leq e^{-m(L(E)-Cm^{-\gamma/2})}\ll \eta,
\end{align}
in which we used $\mathrm{dist}(k,\{a_k,b_k\})\geq m-1$.
A similar estimate holds for $|\tilde{G}^{E+i\eta}_{[a_k,b_k]}(\theta;b_k,k)|$ as well.
Combining \eqref{eq:G=tG_1_case1}, \eqref{eq:G=tG_2_case1} with \eqref{eq:G=tG_case1}, we conclude that
\begin{align}\label{eq:G=tG_n_sch}
    |G^{E+i\eta}_{[0,N-1]}(\theta;k,k)|\leq |\tilde{G}^{E+i\eta}_{[a_k,b_k]}(\theta;k,k)|+C.
\end{align}
Once again, by Cramer's rule and similar numerator/denominator estimates,
\begin{align}\label{eq:tG_est_sch}
    |\tilde{G}^{E+i\eta}_{[a_k,b_k]}(\theta;k,k)|
    =\frac{|D_{k-a_k}(\theta+a_k\omega,E+i\eta)|\cdot |D_{b_k-k}(\theta+(k+1)\omega,E+i\eta)|}{|D_{2m}(\theta+a_k\omega,E+i\eta)|} \leq e^{Cm^{1-\gamma/2}}.
\end{align}
Thus, the overall contribution from Case 1 can be bounded by:
\begin{align}
&\frac{2\eta}{N}\sum_k \chi_{(\tilde{\mathcal{B}}_{2m}^{\eta})^c}(\theta+k\omega-m\omega)\cdot \mathrm{Im}(G^{E+i\eta}_{[0,N-1]}(\theta;k,k))\\
&\leq \frac{2\eta}{N}\sum_k \chi_{(\tilde{\mathcal{B}}_{2m}^{\eta})^c}(\theta+k\omega-m\omega)\cdot\left(e^{Cm^{1-\gamma/2}}+C\right)\\
&\leq C\eta\cdot e^{Cm^{1-\gamma/2}}+C\eta\leq C\eta^{1-o(1)},
\end{align}
where we used $1-\gamma/2<\gamma_1$ and hence $\exp(C m^{1-\gamma/2})<\eta^{-o(1)}$ in the last step.
This completes the analysis for Case 1. 
\smallskip 

If $\theta+k\omega-m\omega\in \tilde{\mathcal{B}}_{2m}^{\eta}$, we further divide into Case 2 (intermediate resonant) and Case 3 (strong resonant). Case 2 will require the crucial local zero count and the delicate factorization of $D_{2m}$. 

\smallskip 

 {\bf Case 2.\ } $\theta+k\omega-m\omega\in \tilde{I}_j\setminus (\bigcup_{\ell=1}^{\ell_j}(I_{j,\ell}{{-k_j\omega}}))$ for some $j$.

 \medskip

We have by \eqref{eq:wj_lower_zeros},
\begin{align}\label{eq:Dtj_lower}
    &|D_{2m}(\theta+k\omega-m\omega+k_j\omega,E+i\eta)|\\
    &\qquad \geq \exp(2m(L(E,2m)-m^{-\gamma/2}))\cdot \prod_{\ell=1}^{\ell_j}\|\theta+k\omega-m\omega+k_j\omega-\theta_{j,\ell}\|.
\end{align}
Let $[a_k,b_k]:=[k-m+k_j, k-m+k_j+2m-1]$, and  
\begin{align}
    \tilde{H}_{\theta}|_{[0,N-1],k}:=H_{\theta}|_{[a_k,b_k]} \oplus H_{\theta}|_{[0,N-1]\setminus [a_k,b_k]}
\end{align}
Since $|k_j|<(1-\epsilon)m$,  
\begin{align}\label{eq:k_to_boundary}
    \mathrm{dist}(k, \{a_k,b_k\})\geq \epsilon m.
\end{align}
Let $\tilde{G}^{E+i\eta}_{[0,N-1],k}:=(\tilde{H}_{\theta}|_{[0,N-1],k}-E)^{-1}$.
By the resolvent identity, we have
\begin{align}\label{eq:G=tG}
    &G_{[0,N-1]}^{E+i\eta}(\theta;k,k)
    =\tilde{G}_{[0,N-1],k}^{E+i\eta}(\theta;k,k)\\
    &\qquad\qquad\qquad\qquad-G^{E+i\eta}_{[0,N-1]}(\theta;k,a_k-1)\cdot \tilde{G}^{E+i\eta}_{[a_k,b_k]}(\theta;a_k,k)\\
    &\qquad\qquad\qquad\qquad-G^{E+i\eta}_{[0,N-1]}(\theta;k,b_k+1)\cdot \tilde{G}^{E+i\eta}_{[a_k,b_k]}(\theta;b_k,k).
\end{align}
We bound the last two product terms above as follows:
\begin{align}\label{eq:G=tG_1}
    \max(|G^{E+i\eta}_{[0,N-1]}(\theta;k,a_k-1)|,\, |G^{E+i\eta}_{[0,N-1]}(\theta;k,b_k+1)|)\leq \eta^{-1},
\end{align}
and using \eqref{eq:Dtj_lower} and Cramer's rule, we have
\begin{align}\label{eq:G=tG_2}
    |\tilde{G}^{E+i\eta}_{[a_k,b_k]}(\theta;a_k,k)|&=\frac{|D_{b_k-k}(\theta+(k+1)\omega,E+i\eta)|}{|D_{2m}(\theta+a_k\omega,E+i\eta)|}\\
    &\qquad \lesssim \tau^{-2\kappa} e^{-\epsilon m(L(E)-o(1))}\ll \eta,
\end{align}
in which we used \eqref{eq:k_to_boundary} and $\|\theta+k\omega-m\omega+k_j\omega-\theta_{k,\ell}\|\geq \tau$.
A similar estimate holds for $|\tilde{G}^{E+i\eta}_{[a_k,b_k]}(\theta;b_k,k)|$ as well.
Combining \eqref{eq:G=tG_1}, \eqref{eq:G=tG_2} with \eqref{eq:G=tG}, we conclude that
\begin{align}\label{eq:G=tG_n_sch'}
    |G^{E+i\eta}_{[0,N-1]}(\theta;k,k)|\leq |\tilde{G}^{E+i\eta}_{[a_k,b_k]}(\theta;k,k)|+C.
\end{align}
Once again, by Cramer's rule, 
\begin{align}\label{eq:tG_est_sch'}
    &|\tilde{G}^{E+i\eta}_{[a_k,b_k]}(\theta;k,k)|\\
    &\qquad=\frac{|D_{k-a_k}(\theta+a_k\omega,E+i\eta)|\cdot |D_{b_k-k}(\theta+(k+1)\omega,E+i\eta)|}{|D_{2m}(\theta+a_k\omega,E+i\eta)|} \notag\\
    &\qquad\lesssim e^{Cm^{1-\gamma/2}}\cdot \prod_{\ell=1}^{\ell_j} \|\theta+k\omega-m\omega+k_j\omega-\theta_{j,\ell}\|^{-1}.
\end{align}
Thus, the overall contribution from Case 2 can be bounded by:
\begin{align}
&\frac{2\eta}{N}\sum_j\sum_k \chi_{\tilde{I}_j\setminus (\bigcup_{\ell}(I_{j,\ell}-k_j\omega))}(\theta+k\omega-m\omega)\cdot \mathrm{Im}(G^{E+i\eta}_{[0,N-1]}(\theta;k,k))\\
&\leq \frac{2\eta}{N}\sum_j\sum_k \chi_{\tilde{I}_j\setminus (\bigcup_{\ell}(I_{j,\ell}-k_j\omega))}(\theta+k\omega-m\omega)\cdot\\
&\qquad\qquad \cdot\left(e^{Cm^{1-\gamma/2}}\cdot \prod_{\ell=1}^{\ell_j}\|\theta+k\omega-m\omega+k_j\omega-\theta_{j,\ell}\|^{-1}+C\right)\\
&\lesssim e^{Cm^{1-\gamma/2}}\cdot \eta \int_{\tau}^1 \theta^{-2\kappa}\, \mathrm{d}\theta + C\eta\lesssim \eta^{1-o(1)} \tau^{1-2\kappa},
\end{align}
where we used $1-\gamma/2<\gamma_1$ and hence $\exp(Cm^{1-\gamma/2})<\eta^{-o(1)}$ in the last step. 

\smallskip

{\bf Case 3.\ }  $\theta+k\omega-m\omega\in I_{j,\ell}{{-k_j\omega}}$ for some $j,\ell$.

\medskip 

We use the trivial bound $|G_{[0,N-1]}^{E+i\eta}(\theta;k,k)|\leq \eta^{-1}$. Hence for $N\to \infty$, by the ergodic theorem,
\begin{align}
    &\frac{2\eta}{N} \sum_{j,\ell}\sum_k  \chi_{I_{j,\ell}-k_j\omega}(\theta+k\omega-m\omega) \cdot \mathrm{Im}(G_{[0,N-1]}^{E+i\eta}(\theta;k,k))\\
    &\qquad \leq \frac{2}{N}\sum_{j,\ell}\sum_k \chi_{I_{j,\ell}-k_j\omega}(\theta+k\omega-m\omega)\leq 2\sum_{j,\ell}|I_{j,\ell}|+\tau\lesssim m\tau\lesssim \eta^{\frac{1}{2k}-o(1)},
\end{align}
where we used $\tau=\eta^{\frac{1}{2k}}$, see \eqref{eq:choose_tau}.

\smallskip

Finally, combining the three cases and sending $N\to\infty$, we arrive at
\begin{align}
    d_{\omega,N}(\theta,E-\eta,E+\eta)\lesssim \eta^{1-o(1)}+\eta^{1-o(1)}\tau^{1-2\kappa}+\eta^{\frac{1}{2k}-o(1)}\lesssim \eta^{\frac{1}{2\kappa}-o(1)}, 
\end{align}
which  is the claimed H\"older regularity. \qed

\section{Analysis of local zeros for the Jacobi block case}
\label{sec:block1}

Recall that in the Jacobi block case, $f_n(z,E)$ is the finite-volume determinant with periodic boundary condition, see \eqref{def:fn}. 
$E_0\in \R$ is a fixed energy for which $L_d(\omega,E_0)>0$.
Throughout Sections \ref{sec:block1} and \ref{sec:block2}, we write for simplicity $\kappa^d=\kappa^d(\omega,E_0)$. The energy $E$ will be assumed to lie in a neighborhood $I_{E_0}$ of $E_0$, see \eqref{def:IE0}.

\begin{lemma}\label{lem:local_zero*}
Let $n$ be $\kappa_0$-admissible and large. 
For each ball $B(z_0,r_n)$, with $z_0\in \kreis^1$ and $r\simeq r_n:=e^{-(\log n)^{C_0}}$ with $C_0>1$, there exists $|k|<(1-\epsilon)n/2$ such that $f_n(e^{2\pi ik\omega} z,E)$ has at most $2\kappa^d$ zeros in $B(z_0,r)$.
\end{lemma}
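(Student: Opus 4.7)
The plan is to adapt the contradiction argument of Lemma \ref{lem:local_zero} almost verbatim, replacing $D_n$ with $f_n$ and invoking the Jacobi-block global zero count (Theorem \ref{thm:acc=zeros*}) in place of Theorem \ref{thm:acc=zeros}. Concretely, I assume for contradiction that for every integer $k$ with $|k|<(1-\epsilon)n/2$, the function $z\mapsto f_n(e^{2\pi ik\omega}z,E)$ has at least $2\kappa^d+1$ zeros in $B(z_0,r)$.

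The substitution $w = e^{2\pi ik\omega}z$ is an isometry of $\mathbb{C}$, so zeros of $z\mapsto f_n(e^{2\pi ik\omega}z,E)$ inside $B(z_0,r)$ are in bijection with zeros of $w\mapsto f_n(w,E)$ inside the rotated disk $B(e^{2\pi ik\omega}z_0,r)$. The Diophantine hypothesis \eqref{def:DC} gives
\[
\|(k_1-k_2)\omega\|_{\T}\geq \frac{a}{|k_1-k_2|^{A}}\geq \frac{a}{n^{A}}\gg 2r_n=2e^{-(\log n)^{C_0}}
\]
for large $n$, so the rotated disks $\{B(e^{2\pi ik\omega}z_0,r)\}_{|k|<(1-\epsilon)n/2}$ are pairwise disjoint. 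Since $z_0\in\kreis^1$ and $r_n\to 0$, each rotated disk is contained in the annulus $\mathcal{A}_{\delta/2}$ for $n$ large.

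Summing the local counts over these disjoint disks yields $N_n^p(E,\delta/2)\geq (2\kappa^d+1)(1-\epsilon)n$. On the other hand, because $E\in I_{E_0}$, the upper-semicontinuity condition \eqref{def:IE0} forces $\kappa^d(\omega,M_E)\leq \kappa^d$, so Theorem \ref{thm:acc=zeros*} — applicable precisely because $n$ is $\kappa_0$-admissible and large, matching the hypothesis of the lemma — yields $N_n^p(E,\delta/2)\leq 2n\kappa^d+2\delta^{-1}n^{1-\gamma}$. Fixing $\epsilon$ small enough that $(2\kappa^d+1)(1-\epsilon)>2\kappa^d$ makes the two bounds incompatible for large $n$, giving the desired contradiction. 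I do not expect any genuine obstacle here: the nontrivial input, namely the Grassmannian zero-count Theorem \ref{thm:acc=zeros*}, is used as a black box, and the only reason the lemma's statement carries the $\kappa_0$-admissibility hypothesis (absent in the Schrödinger version) is to license this application.
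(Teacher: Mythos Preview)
Your proposal is correct and follows essentially the same contradiction argument as the paper: assume every shift $f_n(e^{2\pi ik\omega}z,E)$ has at least $2\kappa^d+1$ zeros in $B(z_0,r)$, rotate to obtain disjoint balls (by the Diophantine condition) each carrying that many zeros of $f_n(\cdot,E)$, and contradict the global zero count from Theorem~\ref{thm:acc=zeros*} combined with $\kappa^d(\omega,E)\leq \kappa^d$ from~\eqref{def:IE0}. Your version is slightly more explicit about why the rotated disks lie in the annulus and why $\kappa_0$-admissibility is needed, but the argument is the same.
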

\begin{proof}
    Assume that $f_n(e^{2\pi ik\omega}z,E)$ has at least $2\kappa^d+1$ zeros in $B(z_0,r)$ for each $|k|<(1-\epsilon)n/2$.
    This implies that for each such $k$ that $f_n(z,E)$ has at least $2\kappa+1$ zeros in $B(e^{-2\pi ik\omega}z_0,r)$.
    These balls are disjoint from each other due to the Diophantine condition on $\omega$: 
    \[\|(k_1-k_2)\omega\|\gtrsim \frac{1}{n^A}\gg e^{-(\log n)^{C_0}}.\]
    Therefore, $f_n(z,E)$ has at least $(2\kappa^d+1)(1-\epsilon)n>2\kappa^d (1+o(1)) n$ zeros in the annulus, which contradicts Theorem~\ref{thm:acc=zeros*} and $\kappa^d(\omega,E)\leq \kappa^d$ according to \eqref{def:IE0}.
\end{proof}

\begin{lemma}\label{lem:un_lower_zeros*}
Let $n$ be $\kappa_0$-admissible. 
For each $z_0\in \kreis^1$, let $k_0=k_0(z_0)$, $|k_0|<(1-\epsilon)n/2$ be as in Lemma~\ref{lem:local_zero*}.  
Then there exists an integer $C_0\in [1,2\kappa+1]$ such that $f_n(e^{2\pi i k_0\omega}z,E)$ has no zero in $B(z_0,4(C_0+1)r_n)\setminus B(z_0,4C_0r_n)$, and has at most $2\kappa^d$ zeros, denoted by $\{z_{n,1},...,z_{n,j_0}\}$, in $B(z_0,4C_0r_n)$. Furthermore, for $z\in B(z_0,(4C_0+3)r_n)$,  the following lower bound holds:
\begin{align}\label{eq:un_lower_zeros*}
    \log |f_n(e^{2\pi ik_0\omega}z,E)|\geq n(L^d(E,n)+\langle \log |\det B(\cdot)|\rangle-n^{-\gamma/2})+\sum_{j=1}^{j_0}\log|z-z_{n,j}|.
\end{align}
\end{lemma}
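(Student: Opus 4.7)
The plan is to mirror the proof of Lemma~\ref{lem:un_lower_zeros} in the Jacobi block setting, replacing each scalar ingredient with its $\bigwedge^d$ analogue. First I produce $C_0$ by pigeonhole: applying Lemma~\ref{lem:local_zero*} to $B(z_0, 4(2\kappa^d+2)r_n)$ (still $\simeq r_n$) selects a $k_0$ with at most $2\kappa^d$ zeros of $z \mapsto f_n(e^{2\pi ik_0\omega}z, E)$ inside, after which at least one of the $2\kappa^d+1$ disjoint annuli $B(z_0, 4(C+1)r_n) \setminus B(z_0, 4Cr_n)$ (for $C = 1, \ldots, 2\kappa^d+1$) must be zero-free; that $C$ is $C_0$. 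Labeling the $j_0 \leq 2\kappa^d$ zeros in $B(z_0, 4C_0 r_n)$ as $z_{n,1}, \ldots, z_{n,j_0}$, I form $Y_n(z) := f_n(e^{2\pi ik_0\omega}z, E)/\prod_{j=1}^{j_0}(z - z_{n,j})$, so that $y_n := n^{-1}\log|Y_n|$ is harmonic on $B(z_0, 4(C_0+1)r_n)$.

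The key step is the uniform upper bound on $y_n$. Writing $z = e^{2\pi i(\theta+i\varepsilon)}$ with $|\varepsilon| \lesssim r_n$, I use Lemma~\ref{lem:detP} to split $|f_n| = |\det(M_{n,E}(\theta+i\varepsilon) - I_{2d})| \prod_{j=0}^{n-1}|\det B(\theta+j\omega+i\varepsilon)|$. The first factor is bounded by $\sum_{j=0}^{2d}\binom{2d}{j}\|\bigwedge^j M_{n,E}\|$. Lemma~\ref{lem:upperbd_E'} handles $j = d$ with rate $n^{-\gamma}$, while Lemma~\ref{lem:upper_j_neq_d} handles $j \neq d$ with rate $o(1)$; since the spectral gap $L_d > 0$ combined with Lemma~\ref{lem:Lip_eps} gives $L^d_\varepsilon \geq L^j_\varepsilon + c$ for all $j \neq d$ and small $|\varepsilon|$, the $j = d$ contribution strictly dominates for large $n$. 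The product $\prod|\det B|$ is controlled uniformly by $\exp(n\langle\log|\det B(\cdot+i\varepsilon)|\rangle + o(n^{1-\gamma}))$ through the standard LDT for Birkhoff sums of the subharmonic function $\log|\det B|$. Lipschitz continuity in $\varepsilon$ swaps $L^d_\varepsilon(E, n)$ for $L^d(E, n)$ and $\langle\log|\det B(\cdot+i\varepsilon)|\rangle$ for $\langle\log|\det B|\rangle$ at cost $O(r_n) = o(n^{-\gamma})$, and on $\partial B(z_0, 4(C_0+1)r_n)$ the denominator satisfies $\prod|z - z_{n,j}| \geq (4r_n)^{2\kappa^d} = e^{-O((\log n)^{C_0})}$. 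The maximum principle then yields
\begin{equation}
y_n(z) \leq L^d(E, n) + \langle\log|\det B|\rangle + 4n^{-\gamma} \quad \text{on } B(z_0, 4(C_0+1)r_n).
\end{equation}

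To close, I form the non-negative harmonic function $\tilde{y}_n := L^d(E,n) + \langle\log|\det B|\rangle + 4n^{-\gamma} - y_n$ and look for a point where it is small. Lemma~\ref{lem:deno} (applicable because $n$ is $\kappa_0$-admissible) furnishes a subset of $\T$ of measure $\geq 1 - e^{-n^{\gamma}} \gg r_n$ on which $n^{-1}\log|f_n(e^{2\pi i\theta_1}, E)| \geq L^d(E,n) + \langle\log|\det B|\rangle - n^{-\gamma}$. Picking such a $\theta_1$ in the arc $\{\theta_0 + k_0\omega + t : |t| \leq r_n\}$, where $z_0 = e^{2\pi i\theta_0}$, and setting $z_1 := e^{2\pi i(\theta_1 - k_0\omega)} \in B(z_0, r_n)$, I obtain $\tilde{y}_n(z_1) \leq 5n^{-\gamma}$ after using $|z_1 - z_{n,j}| \leq 1$ to force $-n^{-1}\sum_j \log|z_1 - z_{n,j}| \geq 0$. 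Harnack's inequality on $B(z_0, (4C_0+3)r_n) \subset B(z_0, 4(C_0+1)r_n)$ then promotes this to $\sup \tilde{y}_n \leq Cn^{-\gamma}$, so $y_n(z) \geq L^d(E,n) + \langle\log|\det B|\rangle - n^{-\gamma/2}$ on the smaller disk; unfolding $Y_n$ recovers \eqref{eq:un_lower_zeros*}. The main obstacle is the upper bound for $|f_n|$: unlike the scalar case, one must carefully dominate the $\bigwedge^j$ contributions for $j \neq d$ (where only the cruder $o(1)$ rate of Lemma~\ref{lem:upper_j_neq_d} is available) using the strict gap $L_d > 0$, and separately handle the $\prod|\det B|$ factor via an LDT for subharmonic Birkhoff sums; every other ingredient ports directly from the Schr\"odinger argument.
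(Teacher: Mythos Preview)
Your proof is correct and follows essentially the same approach as the paper. The one place you add detail is the upper bound on $n^{-1}\log|f_n|$: you go through Lemma~\ref{lem:detP}, bound $|\det(M_{n,E}-I_{2d})|$ by $\sum_j\binom{2d}{j}\|\bigwedge^j M_{n,E}\|$, and use the gap $L_d>0$ to show the $j=d$ term dominates, whereas the paper simply cites Lemma~\ref{lem:upperbd_E'} for this bound without spelling out the intermediate steps (the same exterior-power expansion does appear explicitly in the paper's proof of the subsequent Lemma~\ref{lem:un_lower_zeros_eta*}).
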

\begin{proof}
    The existence of $C_0$ follows from Lemma \ref{lem:local_zero*}. 
    The function 
    \begin{align}
        Y_n(z):=\frac{f_n(e^{2\pi ik_0\omega}z,E)}{\prod_{j=1}^{j_0} (z-z_{n,j})}
    \end{align}
     has no zeros in $B(z_0, 4(C_0+1)r_n)$, whence $y_n(z):=n^{-1}\log |Y_n(z)|$ is harmonic  in $B(z_0,4(C_0+1)r_n)$.
    For the upper bound on $y_n(z)$ on $\partial B(z_0,4(C_0+1)r_n)$ we note that  
    if $z\in \partial B(z_0,4(C_0+1)r_n)$, then for each $1\leq j\leq j_0$, one has $|z-z_{n,j}|\geq 4r_n$. This implies that
    \begin{align}
        \prod_{j=1}^{j_0} |z-z_{n,j}|\geq (4r_n)^{j_0}\geq (4r_n)^{2\kappa}\geq e^{-3\kappa(\log n)^{C_0}}.
    \end{align}
    By Lemma \ref{lem:upperbd_E'} the following upper bound holds
    \begin{align*}
    n^{-1}\log|f_n(e^{2\pi ik_0\omega}z,E)| & \leq \sup_{|\varepsilon'|\leq 4C_0r_n} \big(L_{\varepsilon'}^d(E)+\langle \log |\det B(\cdot+i\varepsilon')|\rangle\big)+2n^{-\gamma}\\
    & \leq L^d(E)+\langle \log |\det B(\cdot)|\rangle+3n^{-\gamma}
    \end{align*}
    Therefore, for any $z\in \partial B(z_0,4(C_0+1)r_n)$ and hence for all $z\in B(z_0,4(C_0+1)r_n)$ by the maximum principle, 
    \begin{align}
        y_n(z)\leq L^d(E)+\langle \log |\det B(\cdot)|\rangle+3n^{-\gamma}+3\kappa \frac{(\log n)^{C_0}}{n}<L^d(E)+\langle \log |\det B(\cdot)|\rangle+4n^{-\gamma}.
    \end{align}
    Applying Harnack's inequality to the non-negative harmonic function \[\tilde{y}_n(z):=L^d(E)+\langle \log |\det B(\cdot)|\rangle+4n^{-\gamma}-y_n(z)\] on $B(z_0, (4C_0+3)r_n)$ we infer that
    \begin{align}
        \sup_{z\in B(z_0,(4C_0+3)r_n)} \tilde{y}_n(z)\leq C \inf_{z\in B(z_0,(4C_0+3)r_n)}\tilde{y}_n(z).
    \end{align}
    The large deviation estimate applied to $f_n(e^{2\pi i \theta},E)$ yields
    \begin{align*}
        & \mathrm{mes}\{\theta: n^{-1}\log |f_n(e^{2\pi i\theta},E)|<L^d(E,n)+\langle \log |\det B(\cdot)|\rangle-n^{-\gamma}\}\leq e^{-n^{\gamma}}\\
        & \qquad\qquad\qquad\qquad\qquad\qquad\qquad\qquad\qquad\qquad\ll r_n\simeq \diam (B(z_0,r_n)\cap \kreis^1).
    \end{align*}
    Hence there exists $z_1\in B(z_0,r_n)$ such that
    \begin{align}
        y_n(z_1)>L^d(E,n)+\langle \log |\det B(\cdot)|\rangle-n^{-\gamma},
    \end{align}
    whence
    \begin{align}
        \inf_{z\in B(z_0,r_n)}\tilde{y}_n(z)\leq \tilde{y}_n(z_1)\leq 5n^{-\gamma}.
    \end{align}
    This implies that for all $z\in B(z_0,(4C_0+3)r_n)$ 
    \begin{align}
        y_n(z)\geq L^d(E,n)+\langle \log |\det B(\cdot)|\rangle-Cn^{-\gamma}>L^d(E,n)+\langle \log |\det B(\cdot)|-n^{-\gamma/2}.
    \end{align}
    This completes the proof. \end{proof}

    \begin{lemma}\label{lem:un_lower_zeros_eta*}
    Let $n,z_0,k_0,C_0$ be as in Lemma \ref{lem:un_lower_zeros*}. Let $\eta\in \R$ such that $|\eta|\leq Ce^{-n^{\gamma_1}}$ with $\gamma_1$ as in \eqref{def:gamma1}. 
    Then $f_n(e^{2\pi ik_0\omega}z,E+i\eta)$ has the same number of zeros as $f_n(e^{2\pi ik_0\omega}z,E)$ in $B(z_0,(4C_0+2\pm 1)r_n)$, with the zeros denoted by $\{z_{n,1}^{\eta},...,z_{n,j_0}^{\eta}\}$. Furthermore for any $z\in B(z_0,(4C_0+2)r_n)$, we have the following lower bound:
    \begin{align}\label{eq:un_lower_zeros_eta*}
    \log |f_n(e^{2\pi ik_0\omega}z,E+i\eta)|\geq n(L^d(E,n)+\langle \log |\det B(\cdot)|\rangle-n^{-\gamma/2})+\sum_{j=1}^{j_0}\log|z-z_{n,j}^{\eta}|.
\end{align}
\end{lemma}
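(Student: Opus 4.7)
The proof runs parallel to that of Lemma \ref{lem:un_lower_zeros_eta} in the Schr\"odinger case, substituting the transfer matrix entry comparison with a perturbation estimate suited to $f_n$. There are three stages: a perturbation bound on $|f_n(\cdot, E+i\eta) - f_n(\cdot, E)|$ on the relevant boundary circles; a Rouch\'e argument to transfer the zero count from $E$ to $E+i\eta$; and a Harnack-type argument to derive the lower bound \eqref{eq:un_lower_zeros_eta*}. Unlike the $d=1$ case, $f_n$ is not an entry of $M_{n,E}$, so the direct bound via $\|M_{n,E+i\eta}-M_{n,E}\|$ would cost a factor $\|M\|^{2d-1}$ which is too lossy; instead I would differentiate $f_n$ in $E$ at the level of cofactors.

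For the first stage, writing $f_n(\tilde\theta,E)=\det(H^p_{\tilde\theta}|_{[0,nd-1]}-EI_{nd})$ as a polynomial of degree $nd$ in $E$, the fundamental theorem of calculus along the segment $E\to E+i\eta$ gives
\begin{align*}
|f_n(\tilde\theta, E+i\eta) - f_n(\tilde\theta, E)| \leq |\eta| \sup_{|\eta'|\leq |\eta|} |\partial_E f_n(\tilde\theta, E+i\eta')|,
\end{align*}
with $\tilde\theta:=\theta+k_0\omega+i\varepsilon$ on the boundary circles. Since $\partial_E f_n(\tilde\theta,E)=-\sum_{k=0}^{nd-1}\mu^E_{[0,nd-1],k,k}(\tilde\theta)$, Lemma \ref{lem:numerator_diag} bounds each bulk cofactor by $\exp(n(L^d(E)+\langle\log|\det B|\rangle+5n^{-\gamma}))$, while the $O(d)$ boundary cofactors admit the same bound by an analogous extension of \cite{HS3}*{Lemma 2.6}. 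Summing, using $|\eta|\leq Ce^{-n^{\gamma_1}}$ together with $\gamma_1>1-\gamma$, and invoking the admissibility of $n$, one arrives at
\begin{align*}
|f_n(\tilde\theta, E+i\eta) - f_n(\tilde\theta, E)| \leq \exp\big(n(L^d(E) + \langle\log|\det B|\rangle) - \tfrac{1}{2} n^{\gamma_1}\big).
\end{align*}

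For the Rouch\'e step, Lemma \ref{lem:un_lower_zeros*} combined with $|z-z_{n,j}|\geq r_n$ and $r_n^{2\kappa^d}\geq e^{-C(\log n)^{C_0}}$ yields $|f_n(e^{2\pi ik_0\omega}z,E)|\geq \exp(n(L^d(E)+\langle\log|\det B|\rangle)-2n^{1-\gamma/2})$ on $\partial B(z_0,(4C_0+2\pm 1)r_n)$. Since $\gamma_1>1-\gamma/2$, the perturbation bound is strictly less than half of $|f_n(\cdot,E)|$ on these circles for large $n$, so Rouch\'e's theorem forces equal zero counts $j_0$ in $B(z_0,(4C_0+2\pm 1)r_n)$; let $\{z_{n,j}^\eta\}_{j=1}^{j_0}$ denote the common zeros. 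For the lower bound \eqref{eq:un_lower_zeros_eta*}, I would apply the same scheme as in Lemma \ref{lem:un_lower_zeros*} to
\begin{align*}
Y_n^\eta(z) := f_n(e^{2\pi ik_0\omega}z, E+i\eta) \Big/ \prod_{j=1}^{j_0}(z - z_{n,j}^\eta),
\end{align*}
which is zero-free on $B(z_0,(4C_0+1)r_n)$, so that $y_n^\eta:=n^{-1}\log|Y_n^\eta|$ is harmonic there. Lemma \ref{lem:upperbd_E'} combined with Lemma \ref{lem:detP}, valid for $E+i\eta$ because $|\eta|\ll n^{-2}e^{-n^\gamma}$ when $\gamma_1>\gamma$, supplies the upper bound on $\partial B(z_0,(4C_0+1)r_n)$; Harnack's inequality then propagates this to a lower bound on $B(z_0,(4C_0+2)r_n)$ once we exhibit a point $z_1\in B(z_0,r_n)$ with $y_n^\eta(z_1)>L^d(E,n)+\langle\log|\det B|\rangle-n^{-\gamma}$. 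Such $z_1$ is produced by applying Lemma \ref{lem:deno} to $f_n(\cdot,E)$ (which guarantees such a point exists already for $E$ since the measure of the LDT exceptional set is $\ll r_n$) and then transferring the bound to $E+i\eta$ via the stage-one perturbation estimate.

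The main obstacle is the uniform cofactor bound for boundary indices $k\in[0,2d-1]\cup[nd-2d,nd-1]$: Lemma \ref{lem:numerator_diag} as stated only covers interior $k$, and a naive Hadamard-type estimate yields $e^{O(n)}$, which is too crude to control the perturbation at the scale $\eta=e^{-n^{\gamma_1}}$ when $L^d(E)+\langle\log|\det B|\rangle$ is not large. A careful extension, exploiting the block-tridiagonal-plus-corners structure of $(H^p_\theta|_{[0,nd-1]}-EI)$ with row and column $k$ deleted and following the telescoping used in \cite{HS3}*{Lemma 2.6}, should reproduce the interior exponential rate $L^d(E)+\langle\log|\det B|\rangle$ up to $o(n)$ error, closing the argument.
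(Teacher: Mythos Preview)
Your approach to the perturbation bound $|f_n(\cdot,E+i\eta)-f_n(\cdot,E)|$ is genuinely different from the paper's. You differentiate in $E$ and sum the diagonal cofactors via Lemma~\ref{lem:numerator_diag}; the paper instead uses the identity $|f_n|=|\det(M_{n,E}-I_{2d})|\cdot\prod_j|\det B_j|$ from Lemma~\ref{lem:detP} and expands $\det(A-I_{2d})-\det(\tilde A-I_{2d})$, with $A=M_{n,E}$ and $\tilde A=M_{n,E+i\eta}$, as a signed sum over subsets $S\subset\{1,\dots,2d\}$ of terms of the form $\big(\textstyle{\bigwedge^{|S|}}A-\textstyle{\bigwedge^{|S|}}\tilde A\big)(\bigwedge_{i\in S}e_i)\wedge(\bigwedge_{i\notin S}e_i)$. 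Each wedge-power difference $\|\textstyle{\bigwedge^j}M_{n,E+i\eta}-\textstyle{\bigwedge^j}M_{n,E}\|$ is then telescoped exactly as in the Schr\"odinger case, using Lemma~\ref{lem:upperbd_E'} for $j=d$ and the cruder Lemma~\ref{lem:upper_j_neq_d} for $j\neq d$; the latter suffices because $L^d(E)>L^j(E)$ strictly when $j\neq d$. This route never sees a single cofactor, so the boundary-index obstacle you flag simply does not arise, and there is no appeal to Lemma~\ref{lem:numerator_diag} at all.

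Your route is workable in principle but carries two loose ends beyond the one you already identify. First, Lemma~\ref{lem:numerator_diag} is stated only for $\theta\in\T$, while you apply it at $\tilde\theta=\theta+k_0\omega+i\varepsilon$ with $|\varepsilon|\lesssim r_n$; its proof in Section~\ref{sec:numerator} uses $|\det M_{n,E'}(\theta)|=1$, which rests on symplecticity and fails off the real torus, so you would need to argue the correction is $e^{O(nr_n)}=e^{o(1)}$. Second, in your Harnack step the zero-free region for $Y_n^\eta$ is $B(z_0,(4C_0+3)r_n)$, not $(4C_0+1)r_n$: Rouch\'e on \emph{both} circles $(4C_0+2\pm1)r_n$ is precisely what forces the perturbed zeros to stay inside the smaller ball and leave the annulus empty, and only harmonicity on the larger ball lets Harnack reach $B(z_0,(4C_0+2)r_n)$. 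These are all repairable, but the paper's exterior-algebra argument is self-contained and avoids them.
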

\begin{proof}
The idea behind the zero count is to apply  Rouch\'e's theorem.
For $z\in B(z_0,(4C_0+2\pm 1)r_n)$, consider the following difference and apply the standard telescoping argument, we have, with $z=e^{2\pi i(\theta+i\varepsilon)}$,
\begin{align}\label{eq:tele_wedge_j}
    &\|\textstyle{\bigwedge^j}M_{n,E+i\eta}(\theta+k_0\omega+i\varepsilon)-\textstyle{\bigwedge^j}M_{n,E}(\theta+k_0\omega+i\varepsilon)\|\\
    \leq &\sum_{\ell=1}^n \|\textstyle{\bigwedge^j}M_{n-\ell,E+i\eta}(\theta+(k_0+\ell+1)\omega+i\varepsilon)\|\cdot \|\textstyle{\bigwedge^j}M_{E+i\eta}(\theta+(k_0+\ell)\omega+i\varepsilon)-\textstyle{\bigwedge^j}M_E(\theta+(k_0+\ell)\omega+i\varepsilon)\|\\
    &\qquad \cdot \|\textstyle{\bigwedge^j} M_{\ell-1,E}(\theta+k_0\omega+i\varepsilon)\|.
\end{align}

For $j=d$, by Lemma \ref{lem:upperbd_E'}, and  $|\varepsilon|\lesssim r_n$, we have, uniformly in $\theta\in\T$,
\begin{align}\label{eq:M_n-ell_d}
    \|\textstyle{\bigwedge^d}M_{n-\ell,E+i\eta}(\theta+(k_0+\ell+1)\omega+i\varepsilon)\|
    \leq &\exp((n-\ell)L^d_{\varepsilon}(E)+2n^{1-\gamma})\\
    \leq &\exp((n-\ell)L^d(E)+3n^{1-\gamma}),
\end{align}
and
\begin{align}\label{eq:M_ell-1_d}
    \|\textstyle{\bigwedge^d} M_{\ell-1,E}(\theta+k_0\omega+i\varepsilon)\|
    \leq \exp((\ell-1)L^d(E)+3n^{1-\gamma}).
\end{align}

For $j\in \{1,...,2d\}\setminus \{d\}$, by Lemma \ref{lem:upper_j_neq_d}, and $|\varepsilon|\lesssim r_n$, we have, uniformly in $\theta\in \T$,
\begin{align}\label{eq:M_n-ell_j}
    \|\textstyle{\bigwedge^j}M_{n-\ell,E+i\eta}(\theta+(k_0+\ell+1)\omega+i\varepsilon)\|
    \leq \exp((n-\ell)(L^j(E)+o(1))),
\end{align}
and 
\begin{align}\label{eq:M_ell-1_j}
    \|\textstyle{\bigwedge^j} M_{\ell-1,E}(\theta+k_0\omega+i\varepsilon)\|
    \leq \exp((\ell-1)(L^j(E)+o(1))).
\end{align}

Concerning the difference term  we have
\begin{align}\label{eq:Mj-Mj}
    &\|\textstyle{\bigwedge^j}M_{E+i\eta}(\theta+(k_0+\ell)\omega+i\varepsilon)-\textstyle{\bigwedge^j}M_E(\theta+(k_0+\ell)\omega+i\varepsilon)\|\\
    &\qquad \leq C_d \|M_{E+i\eta}(\theta+(k_0+\ell)\omega+i\varepsilon)-M_{E}(\theta+(k_0+\ell)\omega+i\varepsilon)\|\leq C_d \eta.
\end{align}

Combining \eqref{eq:M_n-ell_d}, \eqref{eq:M_ell-1_d}, \eqref{eq:Mj-Mj} above with \eqref{eq:tele_wedge_j}, we obtain
\begin{align}\label{eq:wj_Mn-Mn_d}
&\sup_{\theta\in \T} \|\textstyle{\bigwedge^d}M_{n,E+i\eta}(\theta+k_0\omega+i\varepsilon)-\textstyle{\bigwedge^d}M_{n,E}(\theta+k_0\omega+i\varepsilon)\|\notag\\
    &\qquad \leq C_d n \eta \cdot \exp(nL^d(E)+6n^{1-\gamma})\notag\\
    &\qquad \leq \exp(n L^d(E)-n^{\gamma_1}/2),
\end{align}
where we used $\gamma_1>1-\gamma$ in the last inequality.

Similarly, for $j\neq d$, combining \eqref{eq:M_n-ell_j}, \eqref{eq:M_ell-1_j}, \eqref{eq:Mj-Mj} above with \eqref{eq:tele_wedge_j}, we obtain
\begin{align}\label{eq:wj_Mn-Mn_j}
&\sup_{\theta\in \T} \|\textstyle{\bigwedge^j}M_{n,E+i\eta}(\theta+k_0\omega+i\varepsilon)-\textstyle{\bigwedge^j}M_{n,E}(\theta+k_0\omega+i\varepsilon)\|\notag\\
    &\qquad \leq C_d n \eta \cdot \exp(n(L^j(E)+o(1)))\notag\\
    &\qquad \leq \exp(n (L^j(E)+o(1))).
\end{align}

Taking into account that $L^d(E)>L^j(E)+o(1)$ for any $j\neq d$, we have
by \eqref{eq:wj_Mn-Mn_d} and \eqref{eq:wj_Mn-Mn_j} that
\begin{align}\label{eq:wj_Mn-Mn_sup}
    \sup_{j=0}^{2d} \sup_{\theta\in \T} \|\textstyle{\bigwedge^j}M_{n,E+i\eta}(\theta+k_0\omega+i\varepsilon)-\textstyle{\bigwedge^j}M_{n,E}(\theta+k_0\omega+i\varepsilon)\|\leq \exp(nL^d(E)-n^{\gamma_1}/2).
\end{align}

Before we move on, let us denote $M_{n,E}(\theta+k_0\omega+i\varepsilon)=:A$, $M_{n,E+i\eta}(\theta+k_0\omega+i\varepsilon)=:\tilde{A}$, and $\prod_{j=0}^{n-1} \det(B(\theta+(k_0+j)\omega+i\varepsilon))=:b_n(\theta+i\varepsilon)$ for simplicity. 
Consider the difference:
\begin{align}\label{eq:fn-fn}
    &|f_n(e^{2\pi ik_0\omega}z,E)-f_n(e^{2\pi ik_0\omega}z,E+i\eta)|\notag\\
    &=|b_n(\theta+i\varepsilon)|\cdot |\det(A-I_{2d})-\det(\tilde{A}-I_{2d})|\notag\\
    &=|b_n(\theta+i\varepsilon)|\cdot\left|\langle e_1\textstyle{\wedge}\cdots \textstyle{\wedge} e_{2d}, \, \textstyle{\bigwedge^{2d}}(A-I_{2d}) \, (e_1\textstyle{\wedge} \cdots \textstyle{\wedge} e_{2d})-\textstyle{\bigwedge^{2d}}(\tilde{A}-I_{2d}) \, (e_1\textstyle{\wedge} \cdots \textstyle{\wedge} e_{2d})\rangle\right|\notag\\
    &\leq |b_n(\theta+i\varepsilon)|\cdot \|\textstyle{\bigwedge^{2d}}(A-I_{2d}) \, (e_1\textstyle{\wedge} \cdots \textstyle{\wedge} e_{2d})-\textstyle{\bigwedge^{2d}}(\tilde{A}-I_{2d}) \, (e_1\textstyle{\wedge} \cdots \textstyle{\wedge} e_{2d})\|.
\end{align}
Note that for arbitrary $2d\times 2d$ matrices $D_1,D_2$, we have
\begin{align}
    \textstyle{\bigwedge^{2d}} (D_1-D_2)\, (e_1\textstyle{\wedge}\cdots \textstyle{\wedge}e_{2d})=\sum_{S\subset \{1,...,2d\}} \textstyle{\bigwedge}_{i=1}^{2d} w_i^{(S)}, 
\end{align}
where 
\begin{align}
    w_i^{(S)}=\begin{cases}
        D_1e_i, \text{ if } i\in S,\\
        -D_2 e_i, \text{ if } i\notin S.
    \end{cases}
\end{align}
This implies
\begin{align}
    \textstyle{\bigwedge^{2d}}(D_1-D_2)\, (e_1\textstyle{\wedge}\cdots \textstyle{\wedge} e_{2d})=\sum_{S\subset \{1,...,2d\}} (-1)^{m(S)} (\textstyle{\bigwedge}_{i\in S} D_1e_i)\, \textstyle{\bigwedge} \, (\textstyle{\bigwedge}_{i\notin S} (-D_2e_i)),
\end{align}
where $m(S)\in \{0,1\}$ is uniquely determined by $S$. Applying this formula to $D_1=A$ (and similarly to $D_1=\tilde{A}$) and $D_2=I_{2d}$, we arrive at
\begin{align}
    \textstyle{\bigwedge^{2d}}(A-I_{2d}) \, (e_1\textstyle{\wedge} \cdots \textstyle{\wedge} e_{2d})=\sum_{S\subset \{1,...,2d\}} (-1)^{\tilde{m}(S)} (\textstyle{\bigwedge}_{i\in S} Ae_i)\, \textstyle{\bigwedge}\, (\textstyle{\bigwedge}_{i\notin S} e_i),
\end{align}
with $\tilde{m}(S)\in \{0,1\}$ uniquely determined by $S$, and a similar expression for $\tilde{A}$. Taking the difference of the expression above between $A$ and $\tilde{A}$, we obtain
\begin{align}
    &\textstyle{\bigwedge^{2d}}(A-I_{2d}) \, (e_1\textstyle{\wedge} \cdots \textstyle{\wedge} e_{2d})-\textstyle{\bigwedge^{2d}}(\tilde{A}-I_{2d}) \, (e_1\textstyle{\wedge} \cdots \textstyle{\wedge} e_{2d})\\
    &=\sum_{S\subset \{1,...,2d\}} (-1)^{\tilde{m}(S)} \left(\textstyle{\bigwedge^{|S|}}A-\textstyle{\bigwedge^{|S|}}\tilde{A}\right)(\textstyle{\bigwedge}_{i\in S}e_i)\, \textstyle{\bigwedge}\,  (\textstyle{\bigwedge}_{i\notin S}e_i).
\end{align}
Plugging this back into \eqref{eq:fn-fn}, we have
\begin{align}\label{eq:fn-fn_2}
    &|f_n(e^{2\pi ik_0\omega}z,E)-f_n(e^{2\pi ik_0\omega}z,E+i\eta)|\notag\\
   & \leq C_d\, |b_n(\theta+i\varepsilon)|\cdot \sup_{j=0}^{2d} \|\textstyle{\bigwedge^j}M_{n,E+i\eta}(\theta+k_0\omega+i\varepsilon)-\textstyle{\bigwedge^j}M_{n,E}(\theta+k_0\omega+i\varepsilon)\|\notag\\
   & \leq C_d\, |b_n(\theta+i\varepsilon)|\cdot \exp(n L^d(E)-n^{\gamma_1}/2)),
\end{align}
where we used \eqref{eq:wj_Mn-Mn_sup} in the last line. We further bound
\begin{align}
    \sup_{\theta\in \T}|b_n(\theta+i\varepsilon)|
   & \leq \exp(n \langle \log | \det B(\cdot+i\varepsilon)|\rangle +n^{o(1)})\\
   & \leq \exp(n \langle \log | \det B(\cdot)|\rangle +2n^{o(1)}),
\end{align}
where we used $|\varepsilon|\lesssim r_n$ in the last estimate.
Plugging this back into \eqref{eq:fn-fn_2}, we obtain
\begin{align}\label{eq:fn-fn_3}
    |f_n(e^{2\pi ik_0\omega}z,E)-f_n(e^{2\pi ik_0\omega}z,E+i\eta)|\leq \exp(n (L^d(E)+\langle \log |\det B(\cdot)|\rangle-n^{\gamma_1-1}/4)).
\end{align}
Lemma~\ref{lem:un_lower_zeros*} implies that for $z\in \partial B(z_0,(4C_0+2\pm 1)r_n)$,
\begin{align}\label{eq:fn_lower}
    |f_n(e^{2\pi ik_0\omega}z,E)|
    &\geq \exp(n(L^d(E,n)+\langle \log |\det B(\cdot)|\rangle-n^{-\gamma/2}))\cdot r_n^{2\kappa^d}\\
   & \geq \exp(n(L^d(E)+\langle \log |\det B(\cdot)|\rangle-2n^{-\gamma/2})).
\end{align}
Therefore, since $\gamma_1>1-\gamma/2$, we have by \eqref{eq:fn-fn_3} and \eqref{eq:fn_lower} that
\begin{align}
    |f_n(e^{2\pi ik_0\omega}z,E)-f_n(e^{2\pi ik_0\omega}z,E+i\eta)|<\frac{1}{2}|f_n(e^{2\pi ik_0\omega}z,E)|.
\end{align}
The proof of \eqref{eq:un_lower_zeros_eta*} then follows along the same lines as those leading to \eqref{eq:un_lower_zeros} in Lemma \ref{lem:un_lower_zeros}.
\end{proof}

\section{H\"older continuity in the block-valued case}
\label{sec:block2}

The proof is analogous to the scalar-valued case. 
Our goal is to estimate the following expression for arbitrary fixed $E\in I_{E_0}$ and $\theta\in \T$ as $N\to\infty$:
\begin{align}\label{eq:goal*}
    &d_{\omega,dN}(\theta,E-\eta,E+\eta)=\frac{1}{dN}\mathrm{tr}(P_{[E-\eta, E+\eta)}(H_{\omega,\theta}|_{[0,dN-1]})),
\end{align}
cf.\ \eqref{eq:goal}. As before, we 
bound~\eqref{eq:goal*} through the trace of the Green's function: 
\begin{align}\label{eq:dN<G*}
    d_{\omega,dN}(\theta,E-\eta,E+\eta)
    &\leq \frac{2\eta}{dN} \mathrm{Im}(\mathrm{tr}(G_{[0,dN-1]}^{E+i\eta}(\theta)))\\
    &=\frac{2\eta}{dN}\sum_{k=0}^{dN-1} \mathrm{Im} (G_{[0,dN-1]}^{E+i\eta}(\theta;k,k)),
\end{align}
in which $G^{E+i\eta}_{[0,dN-1]}(\theta)=(H_{\omega,\theta}|_{[0,dN-1]}-(E+i\eta))^{-1}$ is the kernel of the resolvent.
Let the auxiliary parameter $\tau$ be defined as 
\begin{align}\label{def:tau_d}
    \tau=\eta^{\frac{1}{2\kappa^d}},
\end{align}
in this block-valued case.
For any $\kappa_0$-admissible $n\in \N$, let 
\begin{align}
    \mathcal{B}_{n,h}^{\eta}:=\{\theta\in \T: u_n(e^{2\pi i\theta},E+i\eta)<L^d(E,n)+\langle \log |\det B(\cdot)|\rangle-h\cdot n^{-\gamma}\}
\end{align}
be the large deviation set for $u_n(z,E+i\eta)=n^{-1}\log |f_n(z,E+i\eta)|$. 
We choose $m\in \N$ such that $2m$ is $\kappa_0$-admissible and $e^{-(2m)^{\gamma_1}}\simeq \eta$. 
The set $\mathcal{B}_{2m,2}^{\eta}$ satisfies a large deviation estimate
\begin{align}\label{eq:B2m_eta_block}
    \mes\,\mathcal{B}_{2m,2}^{\eta}\leq e^{-(2m)^{\gamma}}.
\end{align}
In fact, note $2m$ is $\kappa_0$ admissible, the large deviation estimate of Lemma \ref{lem:deno} applies to $\mathcal{B}_{2m,1}^0$ implying
\begin{align}\label{eq:B2m_0_block}
    \mathrm{mes}\,\mathcal{B}_{2m,1}^0\leq e^{-(2m)^{\gamma}}.
\end{align}
Suppose $\theta\in (\mathcal{B}_{2m,1}^0)^c$, then
\begin{align}
    |f_{2m}(e^{2\pi i\theta},E)|\geq \exp(2m(L^d(E,2m)+\langle \log |\det B(\cdot) |\rangle-(2m)^{-\gamma})).
\end{align}
By the same telescoping argument in the proof of Lemma \ref{lem:un_lower_zeros_eta*}, using, $\gamma_1>1-\gamma/2$, we have
\begin{align}
    |f_{2m}(e^{2\pi i\theta},E+i\eta)|\geq \frac{1}{2} |f_{2m}(e^{2\pi i\theta}, E)|\geq \exp(2m(L^d(E,2m)+\langle \log |\det B(\cdot) |\rangle-2(2m)^{-\gamma})).
\end{align}
This implies $(\mathcal{B}_{2m,1}^0)^c\subset (\mathcal{B}_{2m,2}^{\eta})^c$, hence the estimate \eqref{eq:B2m_eta_block} follows from \eqref{eq:B2m_0_block}.
Let $\{\xi_j\, e^{2\pi i\theta_j}\}_{j=1}^{J_0}$, $\xi_j>0$ and $\theta_j\in \T$, be the zeros of $f_{2m}(z,E+i\eta)$. 
Cartan's estimate implies that 
\begin{align}\label{eq:B2m_theta_pm_rm*}
\mathcal{B}_{2m,m^{\gamma/2}}^{\eta}\subset \bigcup_{j=1}^{J_0}(\theta_j-\tilde{r}_m, \theta_j+\tilde{r}_m),
\end{align}
with $\tilde{r}_m:=e^{-cm^{\gamma/2}}$ for some constant $c>0$, and $J_0\lesssim m$.
Applying Lemma~\ref{lem:un_lower_zeros_eta*} to $n=2m$ and $z_0=e^{2\pi i\theta_j}$, for each $1\leq j\leq J_0$, yields the existence of $|k_j|<(1-\varepsilon)m$, $C_j\in [1,2\kappa^d+1]$, and $\ell_j\in [1,2\kappa^d]$, such that $w_j(z):=f_{2m}(z e^{2\pi ik_j\omega},E+i\eta)$
has $\ell_j$ zeros in $B(e^{2\pi i\theta_j}, (4C_j+1)r_m)$, $r_m\simeq \exp(-(\log m)^{C_0})\gg \tilde{r}_m$.
Denoting the zeros of $f_{2m}(z e^{2\pi ik_j\omega},E+i\eta)$ by $\{\xi_{j,\ell}\, e^{2\pi i\theta_{j,\ell}}\}_{\ell=1}^{\ell_j}$ by $\xi_{j,\ell}>0$, $\theta_{j,\ell}\in \T$, one has
\begin{align}\label{eq:wj_lower_zeros*}
    \log |w_j(z)|\geq 2m(L^d(E,2m)+\langle \log |\det B(\cdot)|\rangle-m^{-\gamma/2})+\sum_{\ell=1}^{\ell_j}\log |z-\xi_{j,\ell}\, e^{2\pi i\theta_{j,\ell}}|,
\end{align}
for all $z\in B(e^{2\pi i\theta_j},(4C_j+2)r_m)$.
For each $1\leq j\leq J_0$ and $1\leq \ell\leq \ell_j$, let 
   $  \tilde{I}_j, 
    I_{j,\ell}$ be defined as in~\eqref{eq:Ijdef}. 
With $\tau$ as in \eqref{eq:choose_tau} and $\tau\ll r_m$, we have for each $j,\ell$ that $I_{j,\ell}\subset \tilde{I}_j$.
As before, we let $\tilde{\mathcal{B}}_{2m}^{\eta}:=\bigcup_{j=1}^{J_0} \tilde{I}_j$. 
Clearly, by \eqref{eq:B2m_theta_pm_rm*}, $\mathcal{B}_{2m,m^{\gamma/2}}^{\eta}\subset \tilde{\mathcal{B}}_{2m}^{\eta}$.
We again divide the analysis of $G^{E+i\eta}_{[0,dN-1]}(\theta; k,k)$ into three cases.

\medskip

{\bf Case 1.} If $\theta+[k/d]\omega-m\omega\in (\tilde{\mathcal{B}}_{2m}^{\eta})^c$.

\smallskip

Since $\theta+[k/d]\omega-m\omega\in (\tilde{\mathcal{B}}_{2m}^{\eta})^c\subset (\mathcal{B}^{\eta}_{2m,m^{\gamma/2}})^c$, we have
\begin{align}\label{eq:f2m_lower_block_1}
   |f_{2m}(\theta+[k/d]\omega-m\omega,E+i\eta)|
   \geq &\exp(2m(L^d(E,2m)+\langle \log |\det B(\cdot)|\rangle -m^{\gamma/2}\cdot (2m)^{-\gamma}))\notag\\
   \geq &\exp(2m(L^d(E,2m)+\langle \log |\det B(\cdot)|\rangle -m^{-\gamma/2})).
\end{align}
Simply let $[a_k,b_k]:=[[k/d]d-md,[k/d]d+md-1]$ and define
\begin{align}
    \tilde{H}_{\theta}|_{[0,dN-1],k}:=H_{\theta}^p|_{[a_k,b_k]} \oplus H_{\theta}|_{[0,dN-1]\setminus [a_k,b_k]},
\end{align}
where $H_{\theta}^p|_{[a_k,b_k]}$ is the operator restricted to $[a_k,b_k]$ with periodic boundary condition, see \eqref{def:H_per}.
Let $  \Gamma_{[a_k,b_k]}
    =H_{\theta}|_{[0,dN-1]}-\tilde{H}_{\theta}|_{[0,dN-1],k}$
\begin{align}
    =&\left(\begin{array}{ c c|c c c c c|c c}
 & & & & & & & & \\
 & &B^{(*)}(\theta+(b_k+1)\omega/d) & & & & & &\\
\hline
 &B(\theta+(b_k+1)\omega/d) & & & & &-B(\theta+a_k\omega/d) & &\\
 & & & & & & & &\\
 & & & & & & & &\\
 & & & & & & & &\\
 & &-B^{(*)}(\theta+a_k\omega/d) & & & & &B^{(*)}(\theta+a_k\omega/d) &\\
\hline
 & & & & & &B(\theta+a_k\omega/d) & &\\
 & & & & & & & &
\end{array}\right),
\end{align}
where the square in the middle is the box $[a_k,b_k]$.
Let $\tilde{G}^{E+i\eta}_{[0,dN-1],k}(\theta):=(\tilde{H}_{\theta}|_{[0,dN-1],k}-E)^{-1}$.
By the resolvent identity, we have
\begin{align}\label{eq:G=tG*_1}
    &G_{[0,dN-1]}^{E+i\eta}(\theta;k,k)\\
    &\qquad=\tilde{G}_{[0,dN-1],k}^{E+i\eta}(\theta;k,k)
   -\left(G^{E+i\eta}_{[0,dN-1]}(\theta)\Gamma_{[a_k,b_k]}\tilde{G}^{E+i\eta}_{[0,dN-1],k}(\theta)\right)(k,k).
\end{align}
Within the product term $\left(G^{E+i\eta}_{[0,dN-1]}(\theta)\Gamma_{[a_k,b_k]}\tilde{G}^{E+i\eta}_{[0,dN-1],k}(\theta)\right)(k,k)$, we bound
\begin{align}\label{eq:G=tG_1*_1}
    \sup_{x\in \Z} |G^{E+i\eta}_{[0,dN-1]}(\theta;k,x)|\leq \eta^{-1}.
\end{align}
Hence
\begin{align}\label{eq:GGG_1}
    &\left|\left(G^{E+i\eta}_{[0,dN-1]}(\theta)\Gamma_{[a_k,b_k]}\tilde{G}^{E+i\eta}_{[0,dN-1],k}(\theta)\right)(k,k)\right|\\
    &\qquad \qquad \leq \sum_{\substack{x\in \Z\\ y\in [a_k,b_k]}} \eta^{-1} |\Gamma_{[a_k,b_k]}(x,y)|\cdot |\tilde{G}^{E+i\eta}_{[0,dN-1],k}(\theta;y,k)|.
\end{align}
In order to ensure $\Gamma_{[a_k,b_k]}(x,y)\neq 0$ while $y\in [a_k,b_k]$, it is necessary that 
\begin{align}
    &y\in [a_k,a_k+d-1]\cup [b_k-d+1,b_k], \text{ and }\\
    &x\in [a_k-d,a_k+d-1] \cup [b_k-d+1,b_k+d].
\end{align}
Thus we can further bound \eqref{eq:GGG_1} by
\begin{align}\label{eq:GGG_2}
    &\left|\left(G^{E+i\eta}_{[0,dN-1]}(\theta)\Gamma_{[a_k,b_k]}\tilde{G}^{E+i\eta}_{[0,dN-1],k}(\theta)\right)(k,k)\right|\\
    &\qquad \qquad \leq C_Bd^2\, \eta^{-1} \sup_{y\in [a_k,a_k+d-1]\cup [b_k-d+1,b_k]} |\tilde{G}^{E+i\eta}_{[0,dN-1],k}(\theta;y,k)|,
\end{align}
here $C_B$ is a constant depending only on $B(\cdot)$.
Also, by \eqref{eq:mufn}, we have
\begin{align}\label{eq:G=tG_2*_1}
    |\tilde{G}^{E+i\eta}_{[0,dN-1],k}(\theta;y,k)|=|\tilde{G}^{E+i\eta}_{[a_k,b_k]}(\theta;y,k)|
    &=\frac{|\mu^{E+i\eta}_{[a_k,b_k],y,k}(\theta)|}{|f_{2m}(\theta+a_k\omega/d,E+i\eta)|}.
\end{align}
By the upper bound of the numerator in Lemma \ref{lem:numerator} and lower bound of denominator in \eqref{eq:f2m_lower_block_1}, we have
\begin{align}\label{eq:tG_11}
    &|\tilde{G}^{E+i\eta}_{[a_k,b_k]}(\theta;y,k)|\\
    &\qquad\qquad\leq C\exp(m(L^{d-1}(E+i\eta,2m)+L^d(E+i\eta,2m)-2L^d(E,2m)+o(1))).
\end{align}
Note that since $|\eta|\leq e^{-cm^{\gamma_1}}$, Lemma \ref{lem:upperbd_E'} implies that 
\begin{align}
    L^d(E+i\eta,2m)\leq L^d(E,2m)+Cm^{-\gamma},
\end{align}
and Lemma \ref{lem:upper_j_neq_d} implies 
\begin{align}
    L^{d-1}(E+i\eta,2m)\leq L^{d-1}(E)+o(1).
\end{align}
Plugging these two estimates into \eqref{eq:tG_11} implies
\begin{align}\label{eq:G=tG_2*_2}
    |\tilde{G}^{E+i\eta}_{[a_k,b_k]}(\theta;y,k)| \leq C\exp(-m(L_d(E)-o(1)))\ll \eta.
\end{align}
Combining \eqref{eq:GGG_2}, \eqref{eq:G=tG_2*_2} with \eqref{eq:G=tG*_1}, we have
\begin{align}\label{eq:G=tG_n*_block}
    |G^{E+i\eta}_{[0,dN-1]}(\theta;k,k)|\leq |\tilde{G}^{E+i\eta}_{[a_k,b_k]}(\theta;k,k)|+C.
\end{align}
Once again, by Cramer's rule, and by the upper bound for numerator of Green's function in Lemma~\ref{lem:numerator_diag} and lower bound of denominator in \eqref{eq:f2m_lower_block_1}, we have
\begin{align*}\label{eq:tG_est*_block}
    |\tilde{G}^{E+i\eta}_{[a_k,b_k]}(\theta;k,k)|
    &=\frac{\mu_{[a_k,b_k],k,k}^{E+i\eta}(\theta)}{|f_{2m}(\theta+a_k\omega/d,E+i\eta)|} 
    \lesssim e^{Cm^{1-\gamma/2}}.
\end{align*}
Thus, the overall contribution from Case 1 can be bounded by:
\begin{align}
&\frac{2\eta}{dN}\sum_k \chi_{(\tilde{\mathcal{B}}_{2m}^{\eta})^c}(\theta+[k/d]\omega-m\omega)\cdot \mathrm{Im}(G^{E+i\eta}_{[0,N-1]}(\theta;k,k))\\
&\leq \frac{2\eta}{dN}\sum_k \chi_{(\tilde{\mathcal{B}}_{2m}^{\eta})^c}(\theta+[k/d]\omega-m\omega)\cdot\left(e^{Cm^{1-\gamma/2}}+C\right)\\
&\leq C\eta\cdot e^{Cm^{1-\gamma/2}}+C\eta\leq C\eta^{1-o(1)},
\end{align}
where we used $1-\gamma<\gamma_1$ and hence $\exp(C m^{1-\gamma})<\eta^{-o(1)}$ in the last step.
This completes the analysis for Case 1.

\medskip

{\bf Case 2.} If $\theta+[k/d]\omega-m\omega\in \tilde{I}_j\setminus (\bigcup_{\ell=1}^{\ell_j}(I_{j,\ell}{{-k_j\omega}}))$ for some $j$.

\smallskip

We have by \eqref{eq:wj_lower_zeros*},
\begin{align}\label{eq:Dtj_lower*}
    &\qquad |f_{2m}(\theta+[k/d]\omega-m\omega+k_j\omega,E+i\eta)|\\
    &\qquad \geq \exp(2m(L^d(E,2m)+\langle \log |\det B(\cdot)|\rangle-m^{-\gamma/2}))\cdot \prod_{\ell=1}^{\ell_j}\|\theta+[k/d]\omega-m\omega+k_j\omega-\theta_{j,\ell}\|.
\end{align}
Let $[a_k,b_k]:=[[k/d]d-md+k_jd, [k/d]d-md+k_jd+2md-1]$, and  
\begin{align}
    \tilde{H}_{\theta}|_{[0,dN-1],k}:=H^p_{\theta}|_{[a_k,b_k]} \oplus H_{\theta}|_{[0,dN-1]\setminus [a_k,b_k]}
\end{align}
{{Since $|k_j|<(1-\epsilon)m$, }}
\begin{align}\label{eq:k_to_boundary*}
    \mathrm{dist}(k, \{a_k,b_k\})\geq \epsilon md.
\end{align}
Let $\tilde{G}^{E+i\eta}_{[0,dN-1],k}:=(\tilde{H}_{\theta}|_{[0,dN-1],k}-E)^{-1}$.
By the resolvent identity, we have
\begin{align}\label{eq:G=tG*}
    &G_{[0,dN-1]}^{E+i\eta}(\theta;k,k)\\
    &\qquad=\tilde{G}_{[0,dN-1],k}^{E+i\eta}(\theta;k,k)
   -\left(G^{E+i\eta}_{[0,dN-1]}(\theta)\Gamma_{[a_k,b_k]}\tilde{G}^{E+i\eta}_{[0,dN-1],k}(\theta)\right)(k,k)
\end{align}
Within the product term $\left(G^{E+i\eta}_{[0,dN-1]}(\theta)\Gamma_{[a_k,b_k]}\tilde{G}^{E+i\eta}_{[0,dN-1],k}(\theta)\right)(k,k)$, we bound
\begin{align}\label{eq:G=tG_1*_1*}
    \sup_{x\in \Z} |G^{E+i\eta}_{[0,dN-1]}(\theta;k,x)|\leq \eta^{-1}.
\end{align}
Hence
\begin{align}\label{eq:GGG_1*}
    &\left|\left(G^{E+i\eta}_{[0,dN-1]}(\theta)\Gamma_{[a_k,b_k]}\tilde{G}^{E+i\eta}_{[0,dN-1],k}(\theta)\right)(k,k)\right|\\
    &\qquad \qquad \leq \sum_{\substack{x\in \Z\\ y\in [a_k,b_k]}} \eta^{-1} |\Gamma_{[a_k,b_k]}(x,y)|\cdot |\tilde{G}^{E+i\eta}_{[0,dN-1],k}(\theta;y,k)|.
\end{align}
In order to ensure $\Gamma_{[a_k,b_k]}(x,y)\neq 0$ while $y\in [a_k,b_k]$, it is necessary that \begin{align}
    &y\in [a_k,a_k+d-1]\cup [b_k-d+1,b_k], \text{ and }\\
    &x\in [a_k-d,a_k+d-1] \cup [b_k-d+1,b_k+d].
\end{align}
Thus we can further bound \eqref{eq:GGG_1*} by
\begin{align}\label{eq:GGG_2*}
    &\left|\left(G^{E+i\eta}_{[0,dN-1]}(\theta)\Gamma_{[a_k,b_k]}\tilde{G}^{E+i\eta}_{[0,dN-1],k}(\theta)\right)(k,k)\right|\\
    &\qquad \qquad \leq C_Bd^2\, \eta^{-1} \sup_{y\in [a_k,a_k+d-1]\cup [b_k-d+1,b_k]} |\tilde{G}^{E+i\eta}_{[0,dN-1],k}(\theta;y,k)|,
\end{align}
here $C_B$ is a constant depending only on $B(\cdot)$.
Also, by \eqref{eq:mufn}, we have
\begin{align}\label{eq:G=tG_2*_1*}
    |\tilde{G}^{E+i\eta}_{[0,dN-1],k}(\theta;y,k)|=|\tilde{G}^{E+i\eta}_{[a_k,b_k]}(\theta;y,k)|
    &=\frac{|\mu^{E+i\eta}_{[a_k,b_k],y,k}(\theta)|}{|f_{2m}(\theta+a_k\omega/d,E+i\eta)|}.
\end{align}
Using \eqref{eq:Dtj_lower*} for the lower bound of denominator and Lemma \ref{lem:numerator} for the upper bound of the numerator, we have
\begin{align}\label{eq:G=tG_2*}
    |\tilde{G}^{E+i\eta}_{[a_k,b_k]}(\theta;y,k)|
    &=\frac{|\mu^{E+i\ta}_{[a_k,b_k],y,k}(\theta)|}{|f_{2m}(\theta+a_k\omega/d,E+i\eta)|}\\
    &\qquad \lesssim \tau^{-2\kappa} e^{-\epsilon m(L_d(E)-o(1))}\ll \eta,
\end{align}
in which we used \eqref{eq:k_to_boundary*} and $\|\theta+[k/d]\omega-m\omega+k_j\omega-\theta_{j,\ell}\|\geq\tau$.
Combining \eqref{eq:GGG_2*}, \eqref{eq:G=tG_2*} with \eqref{eq:G=tG*}, we have
\begin{align}\label{eq:G=tG_n*_block'}
    |G^{E+i\eta}_{[0,N-1]}(\theta;k,k)|\leq |\tilde{G}^{E+i\eta}_{[a_k,b_k]}(\theta;k,k)|+C.
\end{align}
Once again, by the lower bound of denominator in \eqref{eq:Dtj_lower*}, and upper bound of numerator in Lemma~\ref{lem:numerator_diag},
\begin{align*}\label{eq:tG_est*_block'}
    |\tilde{G}^{E+i\eta}_{[a_k,b_k]}(\theta;k,k)|
    &=\frac{\mu_{[a_k,b_k],k,k}^{E+i\eta}(\theta)}{|f_{2m}(\theta+a_k\omega/d,E+i\eta)|} 
    \lesssim e^{Cm^{1-\gamma}}\cdot \prod_{\ell=1}^{\ell_j} \|\theta+[k/d]\omega-m\omega+k_j\omega-\theta_{j,\ell}\|^{-1}.
\end{align*}
Thus the overall contribution from Case 2 can be bounded by:
\begin{align}
&\frac{2\eta}{dN}\sum_j\sum_k \chi_{\tilde{I}_j\setminus (\cup_{\ell}(I_{j,\ell}-k_j\omega))}(\theta+[k/d]\omega-m\omega)\cdot \mathrm{Im}(G^{E+i\eta}_{[0,N-1]}(\theta;k,k))\\
&\leq \frac{2\eta}{dN}\sum_j\sum_k \chi_{\tilde{I}_j\setminus (\cup_{\ell}(I_{j,\ell}-k_j\omega))}(\theta+[k/d]\omega-m\omega)\cdot\\
&\qquad\qquad \cdot\left(e^{Cm^{1-\gamma/2}}\cdot \prod_{\ell=1}^{\ell_j}\|\theta+[k/d]\omega-m\omega+k_j\omega-\theta_{j,\ell}\|^{-1}+\eta^{-1}\tau\right)\\
&\lesssim e^{Cm^{1-\gamma/2}}\cdot \eta \int_{\tau}^1 \theta^{-2\kappa^d}\, \mathrm{d}\theta\lesssim \eta^{(1-o(1))} \tau^{1-2\kappa^d},
\end{align}
where we used $1-\gamma/2<\gamma_1$ and hence $\exp(Cm^{1-\gamma/2})<\eta^{-o(1)}$ in the last step. 

\medskip

{\bf Case 3.} If $\theta+[k/d]\omega-m\omega\in I_{j,\ell}$ for some $j,\ell$.

\smallskip

This is the same as Case~3 in the scalar case $d=1$, so we skip it. 

Finally, combining the three cases, we arrive at, as $N\to\infty$,
\begin{align}
    d_{\omega,dN}(\theta,E-\eta,E+\eta)\lesssim m\tau+\eta^{1-o(1)}\tau^{1-2\kappa^d}+\eta^{\frac{1}{2\kappa^d}-o(1)}\lesssim \eta^{\frac{1}{2\kappa^d}-o(1)},
\end{align}
where we used $\tau=\eta^{1/(2\kappa^d)}$ in the last inequality.
This is the claimed H\"older exponent. \qed

\section{Upper bounds for numerators: proof of Lemma \ref{lem:numerator_diag}}
\label{sec:numerator}

We write the monodromy matrices in block form
\begin{align}
M_{n,E'}(\theta) =\left(\begin{array}{c|c}
M_{n,E'}^{UL}(\theta)  & M_{n,E'}^{UR}(\theta) \\
\hline
M_{n,E'}^{LL}(\theta)  & M_{n,E'}^{LR}(\theta)
\end{array}\right),
\end{align}
where each $M_{n,E'}^{\dagger}$ is a $d\times d$ block, $\dagger=UL, UR, LL, LR$.
We will make use of the following recursive relations: for $n=1$,
\begin{align}\label{eq:rec1}
\begin{cases}
M_{1,E'}^{UL}(\theta) =-(V(\theta) -E')B^{-1}(\theta)\\
M_{1,E'}^{UR}(\theta) =-B^{(*)}(\theta)\\
M_{1,E'}^{LL}(\theta) =B^{-1}(\theta)\\
M_{1,E'}^{LR}(\theta) =0
\end{cases}
\end{align}
and for each $n\geq 2$, one has 
\begin{align}\label{eq:rec2}
\begin{cases}
M_{n,E'}^{UL}(\theta)=-M_{n-1,E'}^{UL}(\theta+\omega) (V(\theta)-E')B^{-1}(\theta)+M_{n-1,E'}^{UR}(\theta+\omega) B^{-1}(\theta)\\
M_{n,E'}^{UR}(\theta)=-M_{n-1,E'}^{UL}(\theta+\omega) B^{(*)}(\theta)\\
M_{n,E'}^{LL}(\theta)=-M_{n-1,E'}^{LL}(\theta+\omega) (V(\theta)-E')B^{-1}(\theta)+M_{n-1,E'}^{LR}(\theta+\omega)B^{-1}(\theta)\\
M_{n,E'}^{LR}(\theta)=-M_{n-1,E'}^{LL}(\theta+\omega) B^{(*)}(\theta)
\end{cases}
\end{align}
We now turn to the proof of Lemma~\ref{lem:numerator_diag}.
Recall that within this lemma, $\theta\in \T$, hence $B^{(*)}(\theta)=B^*(\theta)$, $|\det B(\theta)|=|\det B^{(*)}(\theta)|$, and consequently $|\det M_{n,E'}(\theta)|\equiv 1$.
We let 
\begin{align}
R_{x,x}:=\left(\begin{array}{c|c}
P_n(\theta)-E' & {\bf e}_{dn,x}\\
\hline
{\bf e}_{dn,x}^{*} & 0
\end{array}\right),
\end{align}
where ${\bf e}_{m,j}^*=(\delta_j(m-1),...,\delta_j(1),\delta_j(0))$. 
By definition, 
\begin{align}\label{eq:muxy=Rxy}
|\mu_{[0,dn-1],x,x}^{E'}(\theta)|= |\det R_{x,x}|.
\end{align}
For simplicity, we denote $V(\theta+j\omega)-E'=:C_j$, $B(\theta+j\omega)=:B_j$, $B^{(*)}(\theta+j\omega)=:B_j^{(*)}$, and $M_{k,E'}(\theta+j\omega)=:M_k(j)$.
Let $x=y=\ell d+r$ where $r\in [0,d-1]$. Note that by our assumptions, $\ell \in [2,n-3]$.
\begin{align}\label{eq:Rxy}
\qquad
R_{x,x}
=
&\left(\begin{array}{c|c|c|c|c|c|c|c|c|c}
C_{n-1} & B_{n-1}^{(*)} & & &  & & & &B_0 &\\
\hline
B_{n-1} &\ddots &\ddots & & & & & & & \\
\hline
&\ddots &\ddots &\ddots & & & & & &\\
\hline
& &\ddots &\ddots &B_{\ell+1}^{(*)} & & & & &\\
\hline
& & & B_{\ell+1} &C_{\ell} &B_\ell^{(*)} & & & &{\bf e}_{d,r}\\
\hline
& & & &B_\ell &C_{\ell-1} &\ddots & & &\\
\hline
& & & & &\ddots &\ddots &\ddots & &\\
\hline
& & & & & &\ddots &\ddots &B_1^{(*)} &\\
\hline
B_0^{(*)}& & & & & & &B_1 &C_0 &\\
\hline
& & & &{\bf e}_{d,r}^* & & & & &
\end{array}\right)
=:
\left(\begin{matrix}
\mathrm{Row}_1\\
\mathrm{Row}_2\\
\vdots\\
\text{Row}_{n+1}
\end{matrix}\right)
\end{align}

\subsection*{Row operations on \texorpdfstring{$\text{Row}_1$}{Lg}}
Replacing the block-valued $\text{Row}_1$ in 
\eqref{eq:Rxy}, 
$$ \text{Row}_1 \longrightarrow \text{Row}_1-C_{n-1}\cdot B^{-1}_{n-1}\cdot  \text{Row}_2,$$
yields a new row as follows
$$\text{Row}_1^{(1)}=(0, B_{n-1}^{(*)}-C_{n-1} B_{n-1}^{-1}C_{n-2}, -C_{n-1} B^{-1}_{n-1}B^{(*)}_{n-2},0,...,0,B_0,0).$$
Appealing to \eqref{eq:rec1} and \eqref{eq:rec2}, it is easy to see that
$$\text{Row}_1^{(1)}=(0,-M_2^{UL}(n-2)B_{n-2},\, -M_{2}^{UR}(n-2),0,...,0,B_0,0).$$
Replacing 
$$\text{Row}_1^{(1)} \longrightarrow \text{Row}_1^{(1)}+M_{2}^{UL}(n-2) \cdot \text{Row}_3,$$
yields the new row 
$$\text{Row}_1^{(2)}=(0, 0, -M_{2}^{UR}(n-2)+M_{2}^{UL}(n-2) C_{n-3},\, M_{2}^{UL}(n-2)B_{n-3}^{(*)},0,...,0,B_0,0).$$
Appealing to the recursive relations \eqref{eq:rec1} and \eqref{eq:rec2} again, it is easy to check that
$$\text{Row}_1^{(2)}=(0, 0, -M_{3}^{UL}(n-3)B_{n-3},\, -M_{3}^{UR}(n-3),0,...,0,B_0,0).$$
One can iterate this process, and after replacing 
$$\text{Row}_1^{(n-3)}\longrightarrow \text{Row}_1^{(n-3)}+M_{n-2}^{UL}(2)\cdot \text{Row}_{n-1},$$ we arrive at
$$\text{Row}_1^{(n-2)}=(0,...,0,-M_{n-1}^{UL}(1)B_1,\, B_0-M_{n-1}^{UR}(1),M_{n-\ell-1}^{UL}(\ell+1){\bf e}_{d,r}).$$

\subsection*{Row operations on \texorpdfstring{$\text{Row}_2$}{Lg}}
Replacing 
$$\text{Row}_2\longrightarrow \text{Row}_2-C_{n-2}\cdot B_{n-2}^{-1}\cdot \text{Row}_3,$$
yields the following new  row 
\begin{align}
    \text{Row}_2^{(1)}=
    &(B_{n-1}, 0,  B_{n-2}^{(*)}-C_{n-2}B_{n-2}^{-1}C_{n-3}, -C_{n-2}B_{n-2}^{-1}B_{n-3}^{(*)},0,...,0)\\
   =&(B_{n-1}, 0, -M_2^{UL}(n-3)B_{n-3}, -M_2^{UR}(n-3), 0,...,0). 
\end{align}
Repeating this process, and after replacing 
$$\text{Row}_2^{(n-4)}\longrightarrow \text{Row}_2^{(n-4)}+M_{n-3}^{UL}(2)\cdot  \text{Row}_{n-1},$$
we arrive at
\begin{align}
    \text{Row}_2^{(n-3)}=(B_{n-1},0,...,0,-M_{n-2}^{UL}(1)B_1,-M_{n-2}^{UR}(1),M^{UL}_{n-\ell-2}(\ell+1){\bf e}_{d,r}).
\end{align}
Next, we perform row reduction on $\text{Row}_{n-\ell+1}$. Recall that $3\leq \ell\leq n-2$.

\subsection*{Row operations on \texorpdfstring{$\text{Row}_{n-\ell+1}$ (the row containing $C_{\ell-1}$)}{Lg}.}

Replacing 
$$\text{Row}_{n-\ell+1}\longrightarrow \text{Row}_{n-\ell+1}-C_{\ell-1} B_{\ell-1}^{-1}\cdot \text{Row}_{n-\ell+2},$$
yields a new row as follows
\begin{align}
\text{Row}_{n-\ell+1}^{(1)}
=&(0,...,0,B_{\ell},0,B_{\ell-1}^{(*)}-C_{\ell-1} B_{\ell-1}^{-1}C_{\ell-2}, -C_{\ell-1}B_{\ell-1}^{-1}B_{\ell-2}^*,0,...,0)\\
=&(0,...,0,B_{\ell},0,-M_{2}^{UL}(\ell-2)B_{\ell-2},\, -M_{2}^{UR}(\ell-2),0,...,0).
\end{align}
where the matrix $B_{\ell}$ is the $(n-\ell)-$th entry (from the left) of the block-valued vector above, and we have used \eqref{eq:rec1} and \eqref{eq:rec2} to obtain the second line.
Repeating this process, and after replacing
$$\text{Row}_{n-\ell+1}^{(\ell-3)}\longrightarrow \text{Row}_{n-\ell+1}^{(\ell-3)}+M_{\ell-2}^{UL}(2) \cdot \text{Row}_{n-1},$$ 
we arrive at
$$\text{Row}_{n-\ell+1}^{(\ell-2)}=(0,...,0,B_{\ell},0,...,0,-M_{\ell-1}^{UL}(1)B_1,-M_{\ell-1}^{UR}(1),0).$$
At this point, $R_{x,x}$ becomes $R^{(1)}_{x,x}
=$
\begin{align}\label{eq:R2xy_big_matrix}
&\!\!\!\!\!\!\!\!\!\!\!\!\!\!\!\!\!\!\!\!\!\!\!\!\!\!\!\left(\begin{array}{c|c|c|c|c|c|c|c|c|c|c|c|c}
0 & 0 &0 &\cdots & &  & & &\cdots &0 &-M_{n-1}^{UL}(1)B_1 &B_0-M_{n-1}^{UR}(1) & M_{n-\ell-1}^{UL}(\ell+1){\bf e}_{d,r} \\
\hline
B_{n-1} &0 &0 &\cdots & & & & &\cdots &0 &-M_{n-2}^{UL}(1)B_1 &-M_{n-2}^{UR}(1) &M^{UL}_{n-\ell-2}(\ell+1){\bf e}_{d,r} \\
\hline 
0&B_{n-2} &C_{n-3} &B_{n-3}^{(*)} & & & & & & & & &\\
\hline
\vdots& &\ddots &\ddots &\ddots & & & & & & & &\\
\hline
& & & \ddots &\ddots &\ddots & & & & & & &\\
\hline
\vdots & & & &B_{\ell+1} &C_{\ell} &B_{\ell}^{(*)} & & & & & &{\bf e}_{d,r}\\
\hline
0&\cdots & &\cdots &0 &B_{\ell} &0 &0 &\cdots &0 &-M_{\ell-1}^{UL}(1)B_1 &-M_{\ell-1}^{UR}(1) &0\\
\hline
\vdots & & & & &0 &B_{\ell-1} &C_{\ell-2} &B_{\ell-2}^{(*)} & & &0 &\vdots\\
\hline
& & & & & & &\ddots &\ddots &\ddots & &\vdots & \\
\hline
\vdots& & & & & & & &\ddots &\ddots &\ddots &0 &\vdots\\
\hline
0& & & & & & & & &\ddots &\ddots &B_1^{(*)} &0\\
\hline
B_0^{(*)} &0 &\cdots & & & & & &\cdots &0 &B_1 &C_0 &0\\
\hline
0 &\cdots & &\cdots & 0 &{\bf e}_{d,r}^* & 0 &\cdots & &\cdots &  0 & 0 & 0 
\end{array}\right)
\end{align}
The first, second, $(n-\ell+1)$-th, $n$-th, and $(n+1)$-th rows are
\begin{align}
\left(\begin{matrix}
\text{Row}_1^{(n-2)}\\
\text{Row}_2^{(n-3)}\\
\text{Row}_{n-\ell+1}^{(\ell-2)}\\
\text{Row}_{n}\\
\text{Row}_{n+1}
\end{matrix}
\right)
=\left(\begin{array}{ccccccccccc}
0 &0 &\cdots &0 &0 &0 &\cdots &0 &-M_{n-1}^{UL}(1)B_1 &B_0-M_{n-1}^{UR}(1) &M_{n-\ell-1}^{UL}(\ell+1){\bf e}_{d,r}\\
B_{n-1} &0 &\cdots &0 &0 &0 &\cdots &0 &-M_{n-2}^{UL}(1)B_1 &-M_{n-2}^{UR}(1) &M^{UL}_{n-\ell-2}(\ell+1){\bf e}_{d,r}\\
0 &0 &\cdots &0 &B_{\ell} &0 &\cdots &0 &-M_{\ell-1}^{UL}(1)B_1 &-M_{\ell-1}^{UR}(1) &0\\
B_0^{(*)} &0 &\cdots &0 &0 &0 &\cdots &0 &B_1 &C_0 &0\\
0 &0 &\cdots &0 &{\bf e}_{d,r}^* &0 &\cdots &0 &0 &0 &0
\end{array}\right),
\end{align}
in which only columns $1$, $n-\ell$, $n-1$, $n$, $n+1$ are non-vanishing.
Let
\begin{align}
    S_1:=\left(\begin{array}{ccccc}
0 &0 &-M_{n-1}^{UL}(1)B_1 &B_0-M_{n-1}^{UR}(1) &M_{n-\ell-1}^{UL}(\ell+1){\bf e}_{d,r}\\
B_{n-1} &0 &-M_{n-2}^{UL}(1)B_1 &-M_{n-2}^{UR}(1) &M^{UL}_{n-\ell-2}(\ell+1){\bf e}_{d,r}\\
0 &B_{\ell} &-M_{\ell-1}^{UL}(1)B_1 &-M_{\ell-1}^{UR}(1) &0\\
B_0^{(*)} &0 &B_1 &C_0 &0\\
0 &{\bf e}^*_{d,r} & 0 &0 &0
    \end{array}\right)
\end{align}
be the non-vanishing $(4d+1)\times (4d+1)$ submatrix of rows $1$, $2$, $n-\ell+1$, $n$ and $n+1$.
Let
\begin{align}
    S_2:=
    \left(\begin{array}{cccccccccccccccc}
       B_{n-2} &C_{n-3} &B_{n-3}^{(*)} & & & & & & & & & & & & &\\
          &B_{n-3}       &C_{n-4} &\ddots & & & & & & & & & & & &\\
          &        &\ddots  &\ddots & & & & & & & & & & & &\\
          & & & & &\ddots &C_{\ell+2} &B_{\ell+2}^{(*)} & & & & & & & &\\
          & & & & &  &B_{\ell+2} &C_{\ell+1} &0 & & & & & & &\\
          & & & & &  & &B_{\ell+1} &B_{\ell}^{(*)} &0 & & & & & &\\
          & & & & &  & & &B_{\ell-1} &C_{\ell-2} &B_{\ell-2}^{(*)} & & & & &\\
          & & & & &  & & & &B_{\ell-2} &\ddots &\ddots & & & &\\
          & & & & &  & & & & &\ddots & & & & &\\
          & & & & &  & & & & & & & &B_4 &C_3 &B_3^{(*)}\\
          & & & & &  & & & & & & & & &B_3 &C_2   \\ 
          & & & & &  & & & & & & & & & &B_2          
    \end{array}\right),
\end{align}
which is the submatrix of $R^{(1)}_{x,x}$ after deleting rows $1$, $2$, $n-\ell+1$, $n$, $n+1$ and columns $1$, $n-\ell$, $n-1$, $n$, $n+1$.
Hence 
\begin{align}\label{eq:num1}
|\det R^{(1)}_{x,x}|=
|\det S_1|\cdot |\det S_2|=\prod_{\substack{j=2\\ j\neq \ell}}^{n-2}|\det B_j|\cdot |\det S_1|.
\end{align}
It suffices to compute $\det S_1$.
Expanding determinants and using that $M_{k-2}^{UL}(j)=B_{k+j-2}M_{k-1}^{LL}(j)$, $M_{k-2}^{UR}(j)=B_{k+j-2}M_{k-1}^{LR}(j)$, we have
\begin{align}\notag
|\det S_1|=
&\left|\det\left(\begin{array}{ccccc}
0 &0 &-M_{n-1}^{UL}(1)B_1 &B_0-M_{n-1}^{UR}(1) &M_{n-\ell-1}^{UL}(\ell+1){\bf e}_{d,r}\\
B_{n-1} &0 &-B_{n-1}M_{n-1}^{LL}(1)B_1 &-B_{n-1}M_{n-1}^{LR}(1) &B_{n-1}M^{LL}_{n-\ell-1}(\ell+1){\bf e}_{d,r}\\
0 &B_{\ell} &-M_{\ell-1}^{UL}(1)B_1 &-M_{\ell-1}^{UR}(1) &0\\
B_0^{(*)} &0 &B_1 &C_0 &0\\
0 &{\bf e}^*_{d,r} & 0 &0 &0
    \end{array}\right)\right| \notag\\
=
&|\det B_1|\cdot \left|\det\left(\begin{array}{ccccc}
0 &0 &-M_{n-1}^{UL}(1) &B_0-M_{n-1}^{UR}(1) &M^{UL}_{n-\ell-1}(\ell+1){\bf e}_{d,r}\\
B_{n-1} &0 &-B_{n-1}M_{n-1}^{LL}(1) &-B_{n-1}M_{n-1}^{LR}(1) &B_{n-1}M^{LL}_{n-\ell-1}(\ell+1){\bf e}_{d,r}\\
0 &B_{\ell} &-M_{\ell-1}^{UL}(1) &-M_{\ell-1}^{UR}(1) &0\\
B_0^{(*)} &0 &I_d &C_0 &0\\
0 &{\bf e}^*_{d,r} & 0 &0 &0
    \end{array}\right)\right|\notag
    \end{align}
    Factoring out $B_{n-1}$ and $B_0^{(*)}$ in the first column from corresponding rows yields 
    \begin{align*}
        |\det S_1|\\
=&\prod_{j=0,1,n-1}|\det B_j|\cdot
\left|\det\left(\begin{array}{ccccc}
0 &0 &-M_{n-1}^{UL}(1) &B_0-M_{n-1}^{UR}(1) &M^{UL}_{n-\ell-1}(\ell+1){\bf e}_{d,r}\\
I_d &0 &-M_{n-1}^{LL}(1) &-M_{n-1}^{LR}(1) &M^{LL}_{n-\ell-1}(\ell+1){\bf e}_{d,r}\\
0 &B_{\ell} &-M_{\ell-1}^{UL}(1) &-M_{\ell-1}^{UR}(1) &0\\
I_d &0 &(B_0^{(*)})^{-1} &(B_0^{(*)})^{-1} C_0 &0\\
0 &{\bf e}^*_{d,r} & 0 &0 &0
    \end{array}\right)\right|\notag\\
    \end{align*}
    which furthermore equals
    \begin{align}\label{eq:num2}
&\prod_{j=0,1,n-1}|\det B_j| \cdot 
\left|\det\left(\begin{array}{cccccc}
0& 0 &0 &-M_{n-1}^{UL}(1) &B_0-M_{n-1}^{UR}(1) &M^{UL}_{n-\ell-1}(\ell+1){\bf e}_{d,r}\\
0& I_d &0 &-M_{n-1}^{LL}(1) &-M_{n-1}^{LR}(1) &M^{LL}_{n-\ell-1}(\ell+1){\bf e}_{d,r}\\
0 &0 &B_{\ell} &-M_{\ell-1}^{UL}(1) &-M_{\ell-1}^{UR}(1) &0\\
I_d &0 &0 &0 &-B_0 &0\\
0& I_d &0 &(B_0^{(*)})^{-1} &(B_0^{(*)})^{-1} C_0 &0\\
0& 0 &{\bf e}^*_{d,r} & 0 &0 &0
    \end{array}\right)\right|\notag\\
=&\prod_{j=0,1,n-1}|\det B_j|\cdot 
\left|\det\left(\begin{array}{c|c|c|c}
\left(\begin{matrix}0& 0\\
0 &I_d\end{matrix}\right)& &\left(\begin{matrix}0 & B_0\\ 0 & 0\end{matrix}\right)-M_{n-1}(1) &M_{n-\ell-1}(\ell+1)\left(\begin{matrix} I_d \\ 0\end{matrix}\right) {\bf e}_{d,r}\\
\hline
 & B_{\ell} &-M_{\ell-1}^U(1) &  \\
 \hline
I_{2d} & &-M_1^{-1}(0) &\\
\hline
 &{\bf e}^*_{d,r} & &
    \end{array}\right)\right|\notag\\
=:&\prod_{j=0,1,n-1}|\det B_j|\cdot  |\det S_2|,
\end{align}
in which $M_{\ell-1}^U(1)=(M_{\ell-1}^{UL}(1), M_{\ell-1}^{UR}(1))=(I_d, 0)\cdot M_{\ell-1}(1)$. Empty spaces are to be filled in by zeros.  
We carry out the following row operations on $S_2$ that do not alter the determinant of $S_2$. Below $\text{row}_j$ refers to the $j$-th (block-valued) row of $S_2$. Replacing 
$$\text{row}_1\longrightarrow \text{row}_1-\left(\begin{matrix} 0 &0\\ 0 &I_d\end{matrix}\right) \cdot \text{row}_3,$$
and
$$\text{row}_4\longrightarrow \text{row}_4-{\bf e}_{d,r}^* B_{\ell}^{-1}\cdot \text{row}_2,$$
we arrive at
\begin{align}
S_2^{(1)}=&\left(\begin{array}{c|c|c|c}
& &\left(\begin{matrix}0 & B_0\\ 0 & 0\end{matrix}\right)-M_{n-1}(1)+\left(\begin{matrix}0 &0 \\ 0 &I_d\end{matrix}\right)M_1^{-1}(0) &M_{n-\ell-1}(\ell+1)\left(\begin{matrix} I_d \\ 0\end{matrix}\right) {\bf e}_{d,r}\\
\hline
 & B_{\ell} &-(I_d, 0) \cdot M_{\ell-1}(1) &  \\
 \hline
I_{2d} & &-M_1^{-1}(0) & \\
\hline
 & &{\bf e}_{d,r}^*B_{\ell}^{-1}(I_d, 0)\cdot M_{\ell-1}(1) &
    \end{array}\right)\\
    =&\left(\begin{array}{c|c|c|c}
& &\left(\begin{matrix}0 & B_0\\ 0 & 0\end{matrix}\right)M_1(0)M_1^{-1}(0)-M_{n}(0)M_1^{-1}(0)+\left(\begin{matrix}0 &0 \\ 0 &I_d\end{matrix}\right)M_1^{-1}(0)&M_{n-\ell-1}(\ell+1)\left(\begin{matrix} I_d \\ 0\end{matrix}\right) {\bf e}_{d,r}\\
\hline
 & B_{\ell} &-(I_d, 0)\cdot M_{\ell}(0)\, M_1^{-1}(0) &  \\
 \hline
I_{2d} & &-M_1^{-1}(0) &\\
\hline
 & &{\bf e}_{d,r}^*B_{\ell}^{-1}(I_d, 0)\cdot M_{\ell}(0)\, M_1^{-1}(0) &
    \end{array}\right).
\end{align}
Expanding the determinant along the first two columns, and noting that $\left(\begin{matrix} 0 &B_0\\ 0 &0\end{matrix}\right) M_1(0)=\left(\begin{matrix}I_d &0\\ 0 &0\end{matrix}\right)$, yields
\begin{align}\label{eq:num3}
|\det S_2^{(1)}|&=\frac{|\det B_{\ell}|}{|\det M_1(0)|}\cdot  
\left|\det\left(\begin{array}{c|c}
    -M_n(0)+I_{2d} &M_{n-\ell-1}(\ell+1)\left(\begin{matrix} I_d\\ 0\end{matrix}\right) {\bf e}_{d,r}\\
    \hline
    {\bf e}_{d,r}^*B_{\ell}^{-1}(I_d, 0)\cdot M_{\ell}(0) &0
\end{array}\right)\right| \notag\\
&=:|\det B_{\ell}| \cdot |\det S_2^{(2)}|.
\end{align}
Here after factoring out $M_{\ell}(0)$ and $M_{n-\ell-1}(\ell+1)$ in $S_2^{(2)}$, we have
\begin{align}
    |\det S_2^{(2)}|
    =&|\det M_{n-\ell-1}(\ell+1)|\cdot |\det M_{\ell}(0)|\cdot 
    \left|\det \left(\begin{array}{c|c}
    -M_1(\ell)+M_{n-\ell-1}^{-1}(\ell+1)M_{\ell}^{-1}(0) & \left(\begin{matrix} I_d\\ 0\end{matrix}\right) {\bf e}_{d,r}\\ 
    \hline 
    {\bf e}_{d,r}^* B_{\ell}^{-1}(I_d, 0) &0\end{array}\right)\right|\\
    \leq & \|B_{\ell}^{-1}\|\cdot  \|{\bigwedge}^{2d-1} (-M_1(\ell)+M_{n-\ell-1}^{-1}(\ell+1)M_{\ell}^{-1}(0))\|.
\end{align}
Now in order to compute the last wedge product, we write
\begin{align}\label{eq:wedge_2d-1}
    -M_1(\ell)+M_{n-\ell-1}^{-1}(\ell+1)M_{\ell}^{-1}(0)
    =&
    -M_1(\ell)+\Omega (M_{n-\ell-1}(\ell+1))^* (M_{\ell}(0))^* \Omega \notag\\
    =&\Omega (-\Omega M_1(\ell)\Omega+(M_{n-\ell-1}(\ell+1))^* (M_{\ell}(0))^*)\Omega.
\end{align}
Then the claimed result will following from the following general lemma.
\begin{lemma}\label{lem:wedge_upper}
Let $W,A,B$ be $2d\times 2d$ matrices. Then for any $\ell\in [1,2d]$, the following holds
\begin{align}
    \|{\bigwedge}^{\ell} (W+AB)\|\leq C_d(\|W\|+1)^{\ell} \max_{j=1}^{\ell} (\|{\bigwedge}^j A\|\cdot \|{\bigwedge}^j B\|),
\end{align}
for some constant $C_d$ depending only on $d$.
\end{lemma}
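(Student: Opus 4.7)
The plan is to reduce everything to a product of singular values via the identity $\|\bigwedge^\ell M\|=\sigma_1(M)\cdots\sigma_\ell(M)$, and then to extract the factor $(\|W\|+1)^\ell$ from a binomial expansion. The main tool will be the Weyl inequality for singular values, namely $\sigma_i(X+Y)\le \sigma_1(X)+\sigma_i(Y)$. Applying this with $X=W$ and $Y=AB$ and multiplying over $i=1,\ldots,\ell$ yields
\begin{equation*}
\|\textstyle\bigwedge^\ell(W+AB)\|\le\prod_{i=1}^\ell\bigl(\|W\|+\sigma_i(AB)\bigr).
\end{equation*}

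Next I would expand this product over subsets $S\subset\{1,\ldots,\ell\}$, grouping by $k=|S|$, so as to write the right-hand side as $\sum_{k=0}^\ell\|W\|^{\ell-k}\sum_{|S|=k}\prod_{i\in S}\sigma_i(AB)$. For any $S=\{i_1<\cdots<i_k\}\subset\{1,\ldots,\ell\}$ the indices satisfy $i_j\ge j$, so by the monotonicity of singular values $\prod_{j=1}^k\sigma_{i_j}(AB)\le\prod_{j=1}^k\sigma_j(AB)=\|\textstyle\bigwedge^k AB\|$. The inner sum is therefore bounded by $\binom{\ell}{k}\|\bigwedge^k AB\|$, and the multiplicativity $\bigwedge^k(AB)=\bigwedge^k A\cdot\bigwedge^k B$ together with submultiplicativity of the operator norm yields $\|\bigwedge^k AB\|\le\|\bigwedge^k A\|\|\bigwedge^k B\|$.

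Pulling out the maximum over $k$ and applying the binomial theorem,
\begin{equation*}
\|\textstyle\bigwedge^\ell(W+AB)\|\le\Bigl(\max_{0\le k\le\ell}\|\textstyle\bigwedge^k A\|\|\textstyle\bigwedge^k B\|\Bigr)\sum_{k=0}^\ell\binom{\ell}{k}\|W\|^{\ell-k}=\Bigl(\max_k\|\textstyle\bigwedge^k A\|\|\textstyle\bigwedge^k B\|\Bigr)(\|W\|+1)^\ell,
\end{equation*}
which is the claimed bound; the $k=0$ contribution to the max equals $1$ and can be absorbed into the constant $C_d$ if the max is interpreted strictly over $1\le j\le\ell$, as in the application at hand, where the $j=d$ term is the dominant one and in particular is $\ge 1$. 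No step presents a serious obstacle — the argument is a short linear algebra computation resting only on Weyl's singular value inequality and on multiplicativity of exterior powers.
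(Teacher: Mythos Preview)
Your argument is correct and arguably cleaner than the paper's. The paper proceeds by taking the singular value decompositions $A=U_A\Sigma_A V_A^*$, $B=U_B\Sigma_B V_B^*$ and writing $W+AB=U_A(\tilde W+\Sigma_A F\Sigma_B)V_B^*$ with $\tilde W=U_A^*WV_B$, $F=V_A^*U_B$; it then bounds $\|\bigwedge^\ell(\tilde W+\Sigma_A F\Sigma_B)\|$ entrywise, arriving at $C_d\prod_{j=1}^\ell(\|W\|+\sigma_j(A)\sigma_j(B))$ before the final factor-out. You bypass the SVD reduction entirely by invoking Weyl's inequality $\sigma_i(W+AB)\le\|W\|+\sigma_i(AB)$ and then using multiplicativity of $\bigwedge^k$ on the product $AB$ rather than on $A$ and $B$ separately. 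Your route is shorter and uses only a single classical inequality; the paper's SVD step gives the factored form $\sigma_j(A)\sigma_j(B)$ explicitly, but this is not needed since both arguments land on the same bound $\|\bigwedge^k A\|\,\|\bigwedge^k B\|$ anyway.

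Your remark about the $k=0$ term is also on point: as literally stated the lemma fails when $A=B=0$ (the left side is $\|\bigwedge^\ell W\|$ while the right side vanishes), and the paper's own final inequality has the same defect. In the application the transfer matrices satisfy $|\det M_{n,E'}(\theta)|=1$ on $\T$, so $\max_j\|\bigwedge^j A\|\,\|\bigwedge^j B\|\ge1$ and the issue is moot; you flagged this correctly.
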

\begin{proof}
    By the singular value decomposition,
    \begin{align}
        W+AB=W+U_A \Sigma_A V_A^* U_B \Sigma_B V_B^*=U_A(U_A^*WV_B+\Sigma_A V_A^* U_B \Sigma_B)V_B^*.
    \end{align}
    Denote $U_A^*WV_B=:\tilde{W}$ and $V_A^*U_B=:F$. Clearly $\|\tilde{W}\|=\|W\|$ and $\|F\|=1$. 
  Furthermore, 
    \begin{align}
        \|{\bigwedge}^{\ell}(\tilde{W}+\Sigma_A V_A^*U_B \Sigma_B)\|
        &=\|{\bigwedge}^{\ell} (\tilde{W}_{m,k}+ \sigma_m(A) F_{m,k} \sigma_k(B))_{1\leq m,k\leq 2d}\|\\
        &\leq C_d\sup_{(m_1,...,m_{\ell}), (k_1,...,k_{\ell})} \prod_{s,t=1}^{\ell} |\tilde{W}_{m_s,k_t}+\sigma_{m_s}(A)F_{m_s,k_t}\sigma_{k_t}(B)|\\
        &\leq C_d \prod_{j=1}^{\ell} (\|\tilde{W}\|+\sigma_j(A)\sigma_j(B))\\
        &\leq C_d(\|W\|+1)^{\ell} \max_{j=1}^{\ell} (\|{\bigwedge}^j A\|\cdot \|{\bigwedge}^j B\|),
    \end{align}
    which implies the desired inequality.
\end{proof}
Applying Lemmas \ref{lem:wedge_upper} and  to \eqref{eq:wedge_2d-1} we conclude that
\begin{align}
    &\|{\bigwedge}^{2d-1} (-M_{1,E'}(\ell)+M_{n-\ell-1,E'}^{-1}(\ell+1)M_{\ell,E'}^{-1}(0))\|\\
    &\leq C_d (\|M_{1,E'}(\cdot)\|_{L^\infty(\T)}+1)^{2d-1}\max_{j=1}^{2d-1} \|{\bigwedge}^j M_{n-\ell-1,E'}(\ell+1)\| \cdot \|{\bigwedge}^j M_{\ell,E'}(0)\|
\end{align}
{{For $j=d$, by Lemma \ref{lem:upperbd_E'}, since $|E'-E|<n^{-2}e^{-n^{\gamma}}$,
\begin{align}\label{eq:w_j=d}
&\|{\bigwedge}^d M_{n-\ell-1,E'}(\ell+1)\| \cdot \|{\bigwedge}^d M_{\ell,E'}(0)\|\\
&\leq C^2\exp((n-\ell-1)L^d(E)+2(n-\ell-1)^{1-\gamma}) \cdot \exp(\ell L^d(E)+2\ell^{1-\gamma})\\
&\leq C^2\exp(nL^d(E)+4n^{1-\gamma}).
\end{align}
For $j\neq d$, by Lemma \ref{lem:upper_j_neq_d}, 
\begin{align}\label{eq:w_jneqd}
&\|{\bigwedge}^j M_{n-\ell-1,E'}(\ell+1)\| \cdot \|{\bigwedge}^j M_{\ell,E'}(0)\|\\
&\leq C^2\exp((n-\ell-1)(L^j(E)+o(1))) \cdot \exp(\ell (L^j(E)+o(1)))\\
&\leq C^2\exp(n(L^j(E)+o(1)))\\
&\leq C^2\exp(n L^d(E)).
\end{align}
where we used $L^d(E)\geq \max_{j\neq d} L^j(E)+L_d(E)$ in the last inequality.}}
Inserting the estimates \eqref{eq:w_j=d} and \eqref{eq:w_jneqd} back into $S_2^{(2)}$ and tracing the bounds all the way back to $R_{x,x}$, we conclude that
\begin{align}
    |R_{x,x}|&\leq C_{d,B,V}\|B^{-1}\|_{L^\infty(\T)}\left(\prod_{j=0}^{n-1} |\det B_j|\right) \exp(n(L^d(E)+4n^{1-\gamma}))\\
    &\leq 
    \exp(n(L^d(E)+\langle |\det B(\cdot)|\rangle+5n^{1-\gamma})),
\end{align}
for $n$ large enough, as claimed.
\qed

\end{document}